\newtheorem{thm}{Theorem}[section]
\newtheorem{lem}[thm]{Lemma}
\newtheorem{prop}[thm]{Proposition}
\newtheorem{cor}[thm]{Corollary}
\theoremstyle{definition}
\newtheorem{dfn}[thm]{Definition}
\newtheorem{rmk}[thm]{Remark}
\theoremstyle{remark}
\newtheorem*{ac}{Acknowlegments}
\newtheorem*{conv}{Convention}
\numberwithin{equation}{thm}
\def\A{\mathcal{A}}
\def\add{\operatorname{add}}
\def\Add{\operatorname{Add}}
\def\B{\mathcal{B}}
\def\bound{\operatorname{B}}
\def\C{\mathrm{C}}
\def\cobound{\operatorname{C}}
\def\CM{\operatorname{CM}}
\def\coker{\operatorname{Coker}}
\def\cone{\operatorname{cone}}
\def\coresoldim{\operatorname{-coresol}.\operatorname{dim}}
\def\cycle{\operatorname{Z}}
\def\D{\mathrm{D}}
\def\depth{\operatorname{depth}}
\def\dim{\operatorname{dim}}
\def\End{\operatorname{End}}
\def\Ext{\operatorname{Ext}}
\def\G{\operatorname{G}}
\def\Gdim{\operatorname{Gdim}}
\def\Gid{\operatorname{Gid}}
\def\Ginj{\operatorname{Ginj}}
\def\GInj{\operatorname{GInj}}
\def\Gpd{\operatorname{Gpd}}
\def\Gproj{\operatorname{Gproj}}
\def\GProj{\operatorname{GProj}}
\def\Hom{\operatorname{Hom}}
\def\homol{\operatorname{H}}
\def\id{\operatorname{id}}
\def\inf{\operatorname{inf}}
\def\inj{\operatorname{inj}}
\def\Inj{\operatorname{Inj}}
\def\image{\operatorname{Im}}
\def\K{\mathrm{K}}
\def\ker{\operatorname{Ker}}
\def\level{\operatorname{level}}
\def\Lotimes{\otimes^{\mathbf{L}}}
\def\mod{\operatorname{mod}}
\def\Mod{\operatorname{Mod}}
\def\pd{\operatorname{pd}}
\def\proj{\operatorname{proj}}
\def\Proj{\operatorname{Proj}}
\def\RHom{\operatorname{\mathbf{R}Hom}}
\def\Sub{\operatorname{Sub}}
\def\sup{\operatorname{sup}}
\def\T{\mathcal{T}}
\def\thick{\operatorname{thick}}
\def\tref{\operatorname{tref}}
\def\X{\mathcal{X}}
\def\resoldim{\operatorname{-resol}.\operatorname{dim}}
\def\Y{\mathcal{Y}}
\def\ZZ{\mathbb{Z}}
\begin{document}
\allowdisplaybreaks
\title[Lower bounds for levels of complexes by resolution dimensions]{Lower bounds for levels of complexes\\by resolution dimensions}
\author{Yuki Mifune}
\address{Graduate School of Mathematics, Nagoya University, Furocho, Chikusaku, Nagoya 464-8602, Japan}
\email{yuki.mifune.c9@math.nagoya-u.ac.jp}
\thanks{2020 {\em Mathematics Subject Classification.} 13C60, 13D09, 18G80}
\thanks{{\em Key words and phrases.} contravariantly finite subcategory, derived category, ghost lemma, Gorenstein projective, level, resolving subcategory, orthogonal subcategory, semidualizing module}
\begin{abstract}
Let $\mathcal{A}$ be an abelian category. Denote by $\mathrm{D}^{b}(\mathcal{A})$ the bounded derived category of $\mathcal{A}$. In this paper, we investigate the lower bounds for the levels of objects in $\mathrm{D}^{b}(\mathcal{A})$ with respect to a (co)resolving subcategory satisfying a certain condition. As an application, we not only recover the results of Altmann--Grifo--Monta\~{n}o--Sanders--Vu, and Awadalla--Marley but also extend them to establish lower bounds for levels with respect to some other subcategories in an abelian category. 
\end{abstract}
\maketitle
\section{Introduction}
The notion of the level of an object in a triangulated category was explicitly introduced by Avramov, Buchweitz, Iyengar, and Miller \cite{ABIM}. It measures the number of steps of taking mapping cones required to construct a given object from a specified full subcategory. Prior to this formal introduction, similar concepts had been considered; see \cite{BV, JD Christensen, DGI, KK, Rouquier} for instance.

Let $\A$ be an abelian category, and denote by $\D^{b}(\A)$ the bounded derived category of $\A$. An upper bound for levels of objects in $\D^{b}(\A)$ can often be easily computed by using hard truncation (\cite[Corollary 2.7]{Awadalla Marley} for example). This paper, however, focuses on lower bounds. In particular, we study the level of a complex with respect to the essential images in $\D^{b}(\A)$ of specific full subcategories of $\A$. 
Let $\X$ be a full subcategory of $\A$. By definition, the {\em $\X$-resolution dimension} of a complex $M$ (see Definition \ref{def of resoldim}) is always more than or equal to the negative of the infimum of the integers $i$ such that the $i$-th cohomology $\homol^{i}\!M$ does not vanish. 
In the previous researches, the gap between these two values has been used to establish a lower bound for the levels of $M$ with respect to $\X$ in the case where $\A$ is the category of modules over a ring and $\X$ is the category of (Gorenstein) projective modules.
\begin{thm}[Altmann, Grifo, Monta\~{n}o, Sanders, and Vu]\label{AGMSV int}
Let $R$ be a ring, and $M$ a nonzero complex of left $R$-modules. Then one has
\begin{center}
$\level_{\D^{b}(\Mod R)}^{\Proj R}M \geq \pd_{R}M + \inf M +1$.
\end{center}
\end{thm}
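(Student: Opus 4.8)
The plan is to invoke the ghost lemma of Avramov--Buchweitz--Iyengar--Miller \cite{ABIM}: if $\level^{\Proj R}_{\D^{b}(\Mod R)}X\le n$, then every composite $X=X_{0}\xrightarrow{\varphi_{1}}X_{1}\to\cdots\xrightarrow{\varphi_{n}}X_{n}$ of $n$ consecutive $\Proj R$-ghost maps vanishes, where $\varphi$ is $\Proj R$-ghost if $\Hom_{\D^{b}(\Mod R)}(Q[i],\varphi)=0$ for all projective $Q$ and all $i\in\ZZ$. Because $\Hom_{\D^{b}(\Mod R)}(Q[i],Y)\cong\Hom_{R}(Q,\homol^{-i}Y)$ when $Q$ is projective, being $\Proj R$-ghost is the same as inducing the zero map on every cohomology module. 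Hence it is enough to construct a \emph{nonzero} composite of $\pd_{R}M+\inf M$ such maps starting at $M$.

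I would first dispatch the easy reductions. If $\pd_{R}M=\infty$ the right-hand side is $+\infty$, while any object of finite $\Proj R$-level has finite projective dimension, so the left-hand side is $+\infty$ as well; thus assume $\pd_{R}M=p<\infty$. Since $\level^{\Proj R}(M[k])=\level^{\Proj R}M$ and $\pd_{R}(M[k])+\inf(M[k])=\pd_{R}M+\inf M$, replacing $M$ by $M[\inf M]$ lets us assume $\inf M=0$; then $p\ge-\inf M=0$, and the statement becomes $\level^{\Proj R}M\ge p+1$. For $p=0$ this only says $M\ne 0$, so assume $p\ge 1$.

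Next I would build a ``syzygy tower''. Put $M_{0}=M$; given $M_{j}$, choose for each $i$ with $\homol^{i}M_{j}\ne0$ a projective surjection $Q_{j,i}\twoheadrightarrow\homol^{i}M_{j}$ and lift it to a morphism $Q_{j,i}[-i]\to M_{j}$ realizing it on $\homol^{i}$; let $g_{j}\colon F_{j}:=\bigoplus_{i}Q_{j,i}[-i]\to M_{j}$ be the induced morphism, surjective on all cohomology, and set $M_{j+1}=\cone(g_{j})[-1]$, so there is a triangle $M_{j+1}\to F_{j}\xrightarrow{g_{j}}M_{j}\xrightarrow{f_{j+1}}M_{j+1}[1]$. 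Surjectivity of $\homol^{i}(g_{j})$ forces $\homol^{i}(f_{j+1})=0$ for every $i$, so $f_{j+1}$ and all its shifts are $\Proj R$-ghost. A short induction shows $\inf M_{j}\ge\inf M_{j-1}$, because for $i<\inf M_{j-1}$ one has $\homol^{i-1}M_{j-1}=0$ and $F_{j-1}$ is zero in degree $i$; hence $\inf M_{j}\ge 0$ for all $j$, so each $F_{j}$ is concentrated in nonnegative degrees and therefore $\Ext^{k}_{R}(F_{j},N)=0$ for all $k\ge 1$ and all $R$-modules $N$. Set $\gamma_{p}=(f_{p}[p-1])\circ\cdots\circ(f_{2}[1])\circ f_{1}\colon M\to M_{p}[p]$, a composite of $p$ ghost maps out of $M$.

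The heart of the matter is that $\gamma_{p}\ne 0$; granting this, the ghost lemma yields $\level^{\Proj R}M\ge p+1$, as wanted. Fix an $R$-module $N$ with $\Ext^{p}_{R}(M,N)\ne 0$, which exists since $\pd_{R}M=p$. Applying $\Hom_{\D^{b}(\Mod R)}(-,N[p])$ and using $\Hom_{\D^{b}(\Mod R)}(M_{j}[j],N[p])\cong\Ext^{p-j}_{R}(M_{j},N)$, the map induced by $\gamma_{p}$ is identified with the composite
\[
\Ext^{0}_{R}(M_{p},N)\to\Ext^{1}_{R}(M_{p-1},N)\to\cdots\to\Ext^{p-1}_{R}(M_{1},N)\to\Ext^{p}_{R}(M,N),
\]
whose arrows are the connecting maps of the triangles for the $g_{j}$; since $\Ext^{k}_{R}(F_{j},N)=0$ for $k\ge 1$, each arrow is surjective, so this composite is surjective. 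Therefore some morphism $M_{p}[p]\to N[p]$ composed with $\gamma_{p}$ is nonzero in $\Ext^{p}_{R}(M,N)$, whence $\gamma_{p}\ne 0$. I expect the real work to be concentrated in this step: one has to set up the tower so that the $F_{j}$ remain in the correct cohomological range, which is exactly what makes their higher $\Ext$-modules vanish and allows the non-vanishing of $\Ext^{p}_{R}(M,N)$ to be transported down the tower to $\gamma_{p}$.
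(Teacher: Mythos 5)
Your argument is correct, and it follows a genuinely different route from the paper's. The paper obtains the statement as a specialization of the general Theorem~\ref{main thm llevel}, which applies to any resolving subcategory $\mathcal{X}$ satisfying condition $(\mathbf{A})$: it uses Lemma~\ref{CI 3.1} to produce a representative of $M$ whose terms below degree $\inf M$ lie in $\mathcal{X}\cap\mathcal{X}^{\perp}$, takes the hard-truncation morphisms $\sigma^{\le j}X\to\sigma^{\le j-1}X$ as the ghost maps, and establishes non-vanishing of their composite $\psi$ by contradiction, via the splitting criterion of Lemma~\ref{AW 3.1} together with the $\Ext$-comparison Lemma~\ref{AM 3.2}. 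You instead argue directly for $\Proj R$: you build a syzygy tower $M=M_{0},M_{1},\ldots$ by coning off projective covers of cohomology, take the connecting morphisms $f_{j+1}\colon M_{j}\to M_{j+1}[1]$ as your ghosts, and prove $\gamma_{p}\ne 0$ by an $\Ext$-chase that exploits the fact that $\inf M_{j}\ge 0$ is preserved up the tower, so each $F_{j}$ satisfies $\Ext^{k}_{R}(F_{j},N)=0$ for $k\ge 1$, making the connecting homomorphisms surjective and transporting $\Ext^{p}_{R}(M,N)\ne 0$ to $\gamma_{p}$. Your route is more elementary and is essentially that of the original source \cite{AGMSV}; the price is that it hinges on the $\Ext$-vanishing characterization of projective dimension, which has no analogue for general resolving subcategories with condition $(\mathbf{A})$ or for Gorenstein and $C$-projectives, and the paper's heavier machinery is precisely what buys those generalizations. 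Two small points you take for granted are standard but worth flagging: that objects of finite $\Proj R$-level in $\D^{b}(\Mod R)$ have finite projective dimension (this is Proposition~\ref{thickness}), and that $\pd_{R}M=p\ge 1$ with $\inf M=0$ forces $\Ext^{p}_{R}(M,N)\ne 0$ for some module $N$ (take $N$ detecting non-projectivity of $\cobound^{-p+1}$ of a projective resolution).
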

\begin{thm}[Awadalla and Marley]\label{AM int}
Let $R$ be a commutative ring.
Then for all nonzero objects $M$ in $\D^{b}(\Mod R)$, one has
\begin{center}
$\level_{\D^{b}(\Mod R)}^{\GProj R}M \geq \Gpd_{R}M + \inf M +1$.
\end{center}
Moreover, if $R$ is noetherian and $M$ is in $\D^{b}(\mod R)$, then one has
\begin{center}
$\level_{\D^{b}(\mod R)}^{\tref(R)}M \geq \Gdim_{R}M + \inf M +1$.
\end{center}
\end{thm}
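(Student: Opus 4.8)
The plan is to prove both inequalities at once, by the ghost‑lemma strategy that already underlies Theorem \ref{AGMSV int}, applied to the resolving subcategories $\X=\GProj R$ and $\X=\tref(R)$. First I would record the standard identifications $\GProj R\resoldim M=\Gpd_{R}M$ for $M\in\D^{b}(\Mod R)$ and $\tref(R)\resoldim M=\Gdim_{R}M$ for $M\in\D^{b}(\mod R)$ with $R$ noetherian (see Definition \ref{def of resoldim}), so that the theorem becomes the single statement $\level_{\D^{b}}^{\X}M\geq \X\resoldim M+\inf M+1$. Since levels and the right‑hand side are shift invariant, we may assume $\inf M=0$; the goal is then, for $N:=\X\resoldim M<\infty$, to exhibit a chain of $N$ composable $\X$‑ghost morphisms starting at $M$ whose composite is nonzero, and to conclude by the ghost lemma.

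The input is the Auslander--Buchweitz structure of objects of finite $\X$‑resolution dimension. For a module $M$ with $\X\resoldim M=N$ one has (Holm, for $\GProj R$ over any commutative ring; Auslander--Bridger, for $\tref(R)$) short exact sequences $0\to Y_{i+1}\to G_{i}\to Y_{i}\to 0$, $0\le i\le N-1$, with $Y_{0}=M$, $G_{i}\in\X$, and $\pd_{R}Y_{i}=N-i$ for $i\ge 1$; in particular $Y_{N}$ is projective, and $Y_{i}\notin\X$ for $0\le i<N$ because $\X\cap\{\text{modules of finite projective dimension}\}=\Proj R$. Let $\beta_{i}\colon Y_{i}\to Y_{i+1}[1]$ be the connecting morphism of the corresponding triangle in $\D^{b}(\Mod R)$; I would show the composite
\[
M=Y_{0}\xrightarrow{\ \beta_{0}\ }Y_{1}[1]\xrightarrow{\ \beta_{1}[1]\ }Y_{2}[2]\longrightarrow\cdots\longrightarrow Y_{N}[N]
\]
is a chain of $N$ $\X$‑ghosts with nonzero composite. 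Ghost‑ness is immediate: for all $n\in\ZZ$ and all $X\in\X$ the target $Y_{i+1}[i+1]$ has finite projective dimension, hence $\Ext^{\geq 1}_{R}(X,Y_{i+1})=0$, and the long exact $\Ext_{R}(X,-)$‑sequence attached to $0\to Y_{i+1}\to G_{i}\to Y_{i}\to 0$ forces the induced map $\Hom_{\D}(X[n],\beta_{i}[i])$ to vanish.

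That the composite $c\colon M\to Y_{N}[N]$ is nonzero is the crux, and I would prove it by ``peeling''. Writing $c$ as the Yoneda product $\beta_{N-1}\cdots\beta_{0}\in\Ext^{N}_{R}(M,Y_{N})$ and splicing off $\beta_{0}$ through $0\to Y_{1}\to G_{0}\to M\to 0$, nonvanishing of $c$ is equivalent to nonvanishing of $\beta_{N-1}\cdots\beta_{1}\in\Ext^{N-1}_{R}(Y_{1},Y_{N})$, because the obstruction term $\Ext^{N-1}_{R}(G_{0},Y_{N})$ vanishes ($G_{0}\in\X$, $Y_{N}$ projective, $N-1\geq 1$). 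Iterating this down the chain reduces nonvanishing of $c$ to nonvanishing of $\beta_{N-1}\in\Ext^{1}_{R}(Y_{N-1},Y_{N})$, which holds since $0\to Y_{N}\to G_{N-1}\to Y_{N-1}\to 0$ is non‑split ($Y_{N-1}\notin\X$, so it is not a direct summand of $G_{N-1}\in\X$). The ghost lemma then yields $\level_{\D^{b}}^{\X}M\geq N+1$, which is the assertion for modules. For a general complex $M$ (with $\inf M=0$) I would run the same argument with the complex analogue of the Auslander--Buchweitz sequences, the finite‑projective‑dimension complex $Y_{N}$ now playing the role of the projective module, using that $\RHom_{R}(X,-)$ of such a complex is controlled for $X\in\X$; the $\tref(R)$‑case is identical, invoking $\tref(R)\cap\{\text{finite pd}\}=\Proj R$ and the duality $\RHom_{R}(-,R)$ on $\tref(R)$, which supplies the complex‑level Auslander--Buchweitz sequences in the finitely generated noetherian setting. (The same reasoning with a general resolving, contravariantly finite $\X$ and $\omega=\X\cap\X^{\perp}$ in place of $\Proj R$ is what yields the general theorem.)

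The step I expect to be the main obstacle is producing, for $\X=\GProj R$ over an \emph{arbitrary} commutative ring, the Auslander--Buchweitz sequences $0\to Y_{i+1}\to G_{i}\to Y_{i}\to 0$ with finite‑projective‑dimension kernels and — crucially — with $\pd_{R}Y_{i}$ equal to $N-i$ on the nose, so that the chain has exactly the right length and the base case $\beta_{N-1}\neq 0$ survives, without the crutch of contravariant finiteness of $\GProj R$; this rests on complete resolutions and on Holm's descent of finite Gorenstein projective dimension through short exact sequences. The remaining work is bookkeeping: the reduction from complexes to the module case (or the direct complex argument) and careful tracking of the $\inf M$ shift so that the bound emerges precisely as $\X\resoldim M+\inf M+1$.
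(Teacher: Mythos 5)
Your module‑case argument is correct and it is a genuinely different route from the paper's. The paper proves the general Theorem~\ref{main thm llevel} by first building a single complex $C(-\inf M)$ quasi‑isomorphic to $M$ whose low‑degree terms lie in $\X\cap\X^{\perp}$, with one term in $\X$ and projectives to the right (Lemma~\ref{CI 3.1}), then taking hard truncations $\sigma^{\leq j}$ as the ghost chain, and proving nonvanishing of the composite by a \emph{reductio}: a vanishing composite would produce a null‑homotopy, hence (via the splitting criterion of Lemma~\ref{AW 3.1} together with Lemma~\ref{AM 3.2}) a splitting of a short exact sequence that would lower $\X\resoldim M$, a contradiction. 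You instead splice Auslander--Buchweitz/Holm approximation sequences $0\to Y_{i+1}\to G_i\to Y_i\to 0$, take the connecting morphisms as the ghost chain, and prove nonvanishing \emph{directly} by Yoneda peeling using $\Ext^{\geq 1}(G_i,Y_N)=0$ and the non‑splitting of the last sequence. Your ghostness check ($\Ext^{\geq 1}(X,Y_{i+1})=0$ for $X\in\GProj R$ because $Y_{i+1}$ has finite projective dimension; negative Ext vanishes in the other range) and the peeling are sound, and the approach is arguably more transparent for the module case; the price you pay is that the paper's contradiction argument is uniform over complexes.

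That last point is where your proposal has a genuine gap. You reduce to $\inf M=0$, but that still leaves $\sup M$ arbitrary, and the remaining argument only works when $M$ is concentrated in a single degree. Your sentence about ``the complex analogue of the Auslander--Buchweitz sequences'' does not supply one, and a naive reduction from complexes to modules fails: replacing $M$ by the module $A=\cobound^{0}P$ of a projective resolution preserves $\X\resoldim$ but only gives $\level^{\X}M\geq\level^{\X}A-\sup M$, which loses exactly the $\sup M$ you need. The paper handles precisely this obstacle in Lemma~\ref{CI 3.1}(2): condition $(\mathbf{A})$ is used there to push out the $\proj\A$‑resolution of $M$ step by step, replacing each term $X^{-n-1}\to P^{-n}$ by $Q^{-n-1}\to X^{-n}$ with $Q^{-n-1}\in\X\cap\X^{\perp}$ and $X^{-n}\in\X$, yielding the complex‑level analogue of your $Y$‑chain. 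If you want to keep your Yoneda‑peeling nonvanishing argument you would still need to build this object (or an equivalent chain of triangles with $\X$‑middle terms and $\X^{\perp}$‑cocones over complexes), and that construction --- not Holm's module‑level approximation --- is the real content. One more small inaccuracy: the paper's Theorem~\ref{main thm llevel} does \emph{not} assume contravariant finiteness of $\X$; it assumes condition $(\mathbf{A})$, of which contravariant finiteness (via Wakamatsu's lemma, as in Proposition~\ref{cond A in KS}) is only a sufficient condition.
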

When considering a lower bound for the levels of objects in $\D(\A)$ with respect to $\X$, we may assume that they belong to $\D^{b}(\A)$.
For the notation used in the above results, see Definitions \ref{def of Gproj} and \ref{def of Gcproj}.
The main result of this paper is the following theorem.
\begin{thm}[Theorem \ref{main thm llevel}]\label{main thm intro}
Let $\A$ be an abelian category with enough projective objects, and $\X$ a resolving  subcategory of $\A$. Suppose that for every $X\in \X$, there exists an exact sequence $0\to X\to Y\to X'\to 0$ with $Y\in\X\cap\X^{\perp}$, and $X'\in\X$.
Then for all nonzero objects $M$ in $\D^{b}(\A)$, the following inequality holds.
\begin{center}
$\level_{\D^{b}(\A)}^{\X}M 
\geq \X \resoldim M + \inf M +1$.
\end{center}
\end{thm}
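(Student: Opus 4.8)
The plan is to use a ghost-lemma argument. Recall that the level counts the number of cones needed to build $M$ from the subcategory $\X$ (embedded in $\D^b(\A)$), and the standard way to produce lower bounds is to exhibit a long composition of morphisms that is "ghost" relative to $\X$ (i.e. kills all maps from objects of $\X$, suitably shifted) but whose total composite is nonzero; if such a composition of length $n$ exists, then $\level^{\X}_{\D^b(\A)}M > n$. So write $d = \X\resoldim M$ and $s = \inf M$; we must construct a nonzero composition of $d+s$ ghost maps ending at $M$ (or starting at $M$), forcing $\level^{\X}M \ge d+s+1$.

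The first step is to reduce to a convenient representative of $M$. Using that $\A$ has enough projectives and the hypothesis on $\X$, I would replace $M$ by a complex built from a "proper $\X$-resolution": since $\X$ is resolving and satisfies the stated extension condition, the subcategory $\X\cap\X^{\perp}$ behaves like a category of "relative injectives/projectives" inside $\X$, and one gets that $\X$ is contravariantly finite in a strong enough sense to produce, for $M\in\D^b(\A)$ with $\X\resoldim M = d$, a bounded complex $X_d \to X_{d-1} \to \cdots$ of objects of $\X$ quasi-isomorphic to (a shift of) $M$, with the crucial feature that the syzygies sit in $\X$ and the cokernel at the top does not lie in $\X$ when $d$ is the actual resolution dimension. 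The extension sequences $0\to X\to Y\to X'\to 0$ with $Y\in\X\cap\X^{\perp}$ are what let me "dualize" syzygies and split off $\X^{\perp}$-parts, which is exactly the input needed to make the hard truncations of this resolution into a chain of maps that are $\X$-ghost: a map $f\colon A\to B$ in $\D^b(\A)$ is $\X$-ghost iff $\Hom_{\D^b(\A)}(X[i],f)=0$ for all $X\in\X$ and all $i$, and the point of the $\X^{\perp}$ condition is precisely that $\Hom_{\D^b(\A)}(X, Y[j]) = 0$ for $Y\in\X\cap\X^\perp$, $j>0$, which forces the connecting maps in the tower to annihilate $\Hom$ out of $\X$.

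The second step is the ghost-lemma bookkeeping. From the resolution I extract a sequence of triangles $M_{i+1}\to M_i \to X_i[?]\to M_{i+1}[1]$ (hard truncations), giving composable maps $M = M_0 \to M_1[1] \to \cdots \to M_{d}[d]$; the $\inf M = s$ contributes an additional $s$ factors coming from the nonvanishing of $\homol^{s}M$ and the fact that a projective resolution (or the embedding of $\X$) can be shifted, so that altogether we get a composition of $d+s$ maps. I then check (i) each map is $\X$-ghost — this is where the hypothesis on $\X$ enters, via $\X^{\perp}$-vanishing of the relevant $\Ext$/$\Hom$ groups — and (ii) the total composite $M \to (\text{something})[d+s]$ is nonzero, which reduces to showing the top cokernel is genuinely not in $\X$ (so $\X\resoldim$ is not smaller than $d$) and that $\homol^s M\neq 0$. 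Applying the ghost lemma (the version of \cite{ABIM} adapted to $\X\subseteq\D^b(\A)$, cf. the references in the introduction) then yields $\level^{\X}_{\D^b(\A)}M \ge (d+s)+1$.

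I expect the main obstacle to be Step 1: making precise, in the generality of an arbitrary abelian category with enough projectives and a resolving $\X$ with the stated property, that one may choose a bounded "proper" $\X$-resolution of $M$ whose syzygies and truncations have the $\X$-ghost property, and that the resolution dimension is detected exactly by the failure of the top term to lie in $\X$. In the module case (Theorems \ref{AGMSV int} and \ref{AM int}) this is classical for $\X = \Proj R$ or $\GProj R$, where $\X\cap\X^{\perp}$ is (essentially) $\Proj R$; the content of Theorem \ref{main thm intro} is axiomatizing just enough — resolvingness plus the displayed extension sequences — to run the same machine, so the delicate point is verifying that the extension condition really does supply all the $\Hom$/$\Ext$-vanishing needed for the ghost estimate, rather than a strictly stronger hypothesis.
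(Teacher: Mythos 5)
Your proposal does take essentially the same route as the paper's proof: build a complex representing $M$ whose lower-degree terms lie in $\X\cap\X^\perp$ (using condition~$(\mathbf{A})$ to splice such terms in via pushouts), observe that this forces the hard-truncation morphisms in degrees $\leq\inf M$ to be $\X$-ghost, compose $\X\resoldim M + \inf M$ of them, and invoke the ghost lemma. Two steps you mark as heuristic are, however, genuine work and are where the paper spends most of its effort. First, before anything else one must know that a finite $\X$-level already forces a \emph{finite} $\X$-resolution dimension, which is not formal; the paper proves it by showing the subcategory of complexes of finite $\X$-resolution dimension is thick in $\D^b(\A)$ (Proposition~\ref{thickness}), using Lemma~\ref{CFH 2006 3.1.2} and the closure properties of $\X$. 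Second, the nonvanishing of the total ghost composite is not simply ``the top cokernel fails to lie in $\X$'': the paper assumes the composite vanishes, passes to a projective resolution $P$ so the vanishing becomes an honest null-homotopy, and then uses the null-homotopy data together with Lemma~\ref{AM 3.2} (quasi-isomorphism invariance of $\Ext(-,-)$ into $\X^\perp$) and the Auslander--Reiten-type splitting Lemma~\ref{AW 3.1} to force a short exact sequence $0\to X^{-g}\to X^{-g+1}\to\cobound^{-g+1}X\to 0$ to split; this places $\cobound^{-g+1}X$ in $\X$ and contradicts $g=\X\resoldim M$ via Lemma~\ref{CFH 2006 3.1.2}. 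Your instinct that condition~$(\mathbf{A})$ supplies ``just enough'' $\Ext$-vanishing is exactly right, but the way it is deployed is through the explicit pushout construction of Lemma~\ref{CI 3.1}, not through any contravariant-finiteness of $\X$ itself (that stronger hypothesis only appears later, in Section~6).
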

As an application of the above theorem, we obtain lower bounds for the levels of complexes with respect to various subcategories of an abelian category, which extends Theorems \ref{AGMSV int} and \ref{AM int} due to Altmann--Grifo--Monta\~{n}o--Sanders--Vu, and Awadalla--Marley.
\begin{cor}[Corollaries \ref{Gproj inj}, \ref{Gcproj inj}, \ref{C level}, and \ref{cor of AR}]
Let $\A$ and $\B$ be abelian categories with $\A$ having enough projective objects and $\B$ having enough injective objects. Let $R$ be a commutative noetherian ring, and $C$ a semidualizing $R$-module. Let $\Lambda$ be an artin algebra, and $\X$ a contravariantly finite resolving subcategory of $\mod \Lambda$. Then for all nonzero objects $X\in \D^{b}(\A)$, $Y\in \D^{b}(\B)$, $M\in\D^{b}(\Mod R)$, $N\in\D^{b}(\mod R)$, and $L\in\D^{b}(\mod \Lambda)$, we have the following inequalities.
\begin{center}
$\level_{\D^{b}(\A)}^{\proj\A}X\geq \pd_{\A}X+\inf X +1$,
\quad
$\level_{\D^{b}(\B)}^{\inj\B}Y\geq \id_{\B}Y-\sup Y +1$,
\vskip.5\baselineskip
$\level_{\D^{b}(\A)}^{\Gproj\A}X\geq \Gpd_{\A}X+\inf X +1$,
\quad 
$\level_{\D^{b}(\B)}^{\Ginj\B}Y\geq \Gid_{\B}Y-\sup Y +1$,
\vskip.5\baselineskip
$\level_{\D^{b}(\Mod R)}^{\G_{C}\Proj R}M\geq \G_{C}\pd_{R}M+\inf M +1$,
\quad
$\level_{\D^{b}(\Mod R)}^{\G_{C}\Inj R}M\geq \G_{C}\id_{R}M-\sup M +1$,
\vskip.5\baselineskip
$\level_{\D^{b}(\mod R)}^{\tref_{C}(R)}N\geq \G_{C}\dim_{R}N+\inf N +1$,
\vskip.5\baselineskip
$\level_{\D^{b}(\Mod R)}^{\Add C}M\geq \Add C\resoldim M +\inf M +1$, 
\vskip.5\baselineskip
$\level_{\D^{b}(\mod R)}^{\add C}N\geq \add C\resoldim N +\inf N +1$,
\vskip.5\baselineskip
$\level_{\D^{b}(\mod \Lambda)}^{\X}L \geq \X \resoldim L + \inf L +1$.
\end{center}
\end{cor}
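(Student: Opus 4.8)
The plan is to derive each of the displayed inequalities by applying Theorem~\ref{main thm intro}, with the injective-type bounds handled via the opposite category. For a full subcategory $\X$ of the ambient abelian category one must verify: (i) $\X$ is resolving; (ii) every $X\in\X$ lies in a short exact sequence $0\to X\to Y\to X'\to 0$ with $Y\in\X\cap\X^{\perp}$ and $X'\in\X$; and (iii) $\X\resoldim$ coincides with the invariant appearing in the bound. Condition (iii) is vacuous for the $\Add C$, $\add C$, and $\mod\Lambda$ statements, which are already phrased through $\X\resoldim$; in the remaining cases it is the definition, or a standard characterization, of the dimension involved, namely $\pd=\proj\A\resoldim$, $\id=\inj\B\resoldim$, $\Gpd=\Gproj\A\resoldim$, $\Gid=\Ginj\B\resoldim$, $\G_{C}\pd=\G_{C}\Proj R\resoldim$, $\G_{C}\id=\G_{C}\Inj R\resoldim$, and $\G_{C}\dim_{R}N=\tref_{C}(R)\resoldim N$ for $N\in\D^{b}(\mod R)$ over a noetherian ring. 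The substantive work is thus in (i) and (ii).

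For $\X=\proj\A$ condition (i) is clear and every object of $\A$ lies in $(\proj\A)^{\perp}$, so (ii) holds with the split sequence $0\to X\to X\to 0\to 0$. For the Gorenstein-type subcategories $\Gproj\A$, and likewise $\G_{C}\Proj R$ and $\tref_{C}(R)$, closure under extensions, direct summands, and kernels of epimorphisms is classical; for (ii) one reads off, from the complete (relative) projective resolution defining an object $X\in\X$, a short exact sequence $0\to X\to W\to X'\to 0$ in which $W$ is projective (respectively, lies in $\Add C$ or $\add C$) and $X'\in\X$, and then $W\in\X^{\perp}$ since the vanishing $\Ext^{>0}(\X,W)=0$ of a (relative) projective against the objects of $\X$ is part of the definition of these subcategories. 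For $\X=\Add C$ and $\X=\add C$, closure under extensions and summands is immediate and $\Ext^{>0}_{R}(\Add C,\Add C)=0$ because $C$ is semidualizing, so $\Add C\subseteq(\Add C)^{\perp}$ and (ii) once more holds with $0\to X\to X\to 0\to 0$. The one delicate point for the $C$-relative subcategories $\Add C$, $\add C$, $\G_{C}\Proj R$, and $\tref_{C}(R)$ is that they need not contain the projectives of the ambient category; one then applies Theorem~\ref{main thm intro} with $\Add C$ (resp.\ $\add C$) in the role of the projectives --- legitimate because the fact that $\X$ is resolving enters the proof only through closure under extensions and summands, the existence of $\X$-resolutions, and the sequences in (ii) --- or, equivalently, one transports the $\proj R$-inequality along the Foxby equivalence $-\otimes_{R}C$, under which $\proj R$ corresponds to $\Add C$.

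The injective-type inequalities follow from the projective-type ones by passage to the opposite category. As $\B$ has enough injectives, $\B^{\mathrm{op}}$ has enough projectives, and $\inj\B$, $\Ginj\B$, and $\G_{C}\Inj R$ correspond in $\B^{\mathrm{op}}$ (respectively, in $(\Mod R)^{\mathrm{op}}$) to $\proj$, $\Gproj$, and $\G_{C}\Proj$; the level of an object is unchanged under the equivalence $\D^{b}(\B^{\mathrm{op}})\simeq\D^{b}(\B)^{\mathrm{op}}$, while the infimum computed in the opposite category equals minus the supremum computed in $\B$, so the bounds emerge with exactly the stated sign changes. Finally, for a contravariantly finite resolving subcategory $\X$ of $\mod\Lambda$, condition (i) is the hypothesis, (iii) is vacuous, and (ii) is precisely the assertion that $\omega:=\X\cap\X^{\perp}$ is an injective cogenerator for $\X$ in the sense of Auslander--Buchweitz --- a theorem of Auslander--Reiten for contravariantly finite resolving subcategories of $\mod\Lambda$. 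With (i)--(iii) established in each case, Theorem~\ref{main thm intro} delivers the inequality.

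I expect the crux to be verifying (ii) uniformly: for the Gorenstein-type subcategories this means carefully extracting the sequence $0\to X\to Y\to X'\to 0$ from the defining complete resolution and checking that the relevant relative projectives lie in $\X^{\perp}$, while for $\mod\Lambda$ it rests on the nonformal structure theory of Auslander--Reiten. A secondary point deserving attention is the relative form of Theorem~\ref{main thm intro} invoked for the $C$-relative subcategories, together with the identifications in (iii), which in the $C$-Gorenstein cases draw on the structure theory of semidualizing modules.
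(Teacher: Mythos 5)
Most of your proposal tracks the paper's argument: the $\proj\A$, $\Gproj\A$, $\G_{C}\Proj R$, and $\tref_{C}(R)$ cases are handled exactly as you describe (resolving plus condition $(\mathbf{A})$, the latter read off from the complete (C-)resolution, with White's results doing the work in the $C$-relative Gorenstein cases), the injective statements are obtained by passing to the opposite category, and for $\mod\Lambda$ the paper proves condition $(\mathbf{A})$ directly via Wakamatsu's lemma but explicitly records your Auslander--Reiten route as an equivalent alternative. One small correction: $\G_{C}\Proj R$ and $\tref_{C}(R)$ \emph{do} contain the projectives --- they are resolving subcategories of $\Mod R$ and $\mod R$ by White's theorem --- so no workaround is needed there; only $\Add C$ and $\add C$ fail to be resolving.

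The genuine gap is in the $\Add C$ and $\add C$ inequalities. Your primary suggestion --- rerun Theorem \ref{main thm intro} with $\Add C$ "in the role of the projectives," on the grounds that resolvingness enters only through closure properties, existence of resolutions, and condition $(\mathbf{A})$ --- misreads how the theorem is proved. The proof leans repeatedly on the lifting property of right-bounded complexes of projectives: Lemma \ref{CFH 2006 3.1.2} and Lemma \ref{CI 3.1} start from a $\proj\A$-resolution and use that $\Hom_{\K(\A)}(P,Y)\to\Hom_{\D(\A)}(P,Y)$ is bijective, and the final contradiction in Theorem \ref{main thm llevel} converts "$\psi\sigma=0$ in $\D^{b}(\A)$" into a null-homotopy precisely because $P$ is a right-bounded complex of projectives. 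None of this survives if projectives are replaced by $\Add C$, since a complex of modules in $\Add C$ is not K-projective and a zero morphism in the derived category out of it need not be null-homotopic. Your fallback --- transporting the $\Proj R$ bound along Foxby equivalence --- is indeed the paper's route, but it is not a formality: one must prove that $\RHom_{R}(C,-)$ carries $\langle\Add C\rangle_{n}$ into $\langle\Proj R\rangle_{n}$ (so levels are preserved), that $\Add C\resoldim M=\pd_{R}\RHom_{R}(C,M)$ (which uses the equivalence $C^{-}(\Add R)\simeq C^{-}(\Add C)$ and membership in the Auslander/Bass classes), and that $\inf\RHom_{R}(C,M)=\inf M$. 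These are the contents of Proposition \ref{level foxby}, Lemma \ref{equiv of C}, and Christensen's amplitude formulas, and your write-up does not supply them.
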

The organization of this paper is as follows. In section 2, we provide several fundamental definitions and terminology necessary for proving our main theorem.
In Section 3, we first state and prove various lemmas, and then use them to give a proof of Theorem \ref{main thm intro}.
In section 4, we apply Theorem \ref{main thm intro} to the subcategories of projective/injective and Gorenstein projective/injective objects in an abelian category. In section 5, we extend the application of Theorem \ref{main thm intro} to the subcategories of $C$-projective and Gorenstein $C$-projective/injective modules, where $C$ denotes a fixed semidualizing module. In section 6, we further apply Theorem \ref{main thm intro} to a contravariantly finite resolving subcategory and an orthogonal subcategory.
\begin{conv}
Throughout this paper, we adopt the following conventions. All subcategories are assumed to be full and closed under isomorphisms. All complexes in an additive category are assumed to be cochain complexes unless stated otherwise. For an abelian category $\A$, we denote by $\proj\A$ (resp. $\inj\A$) the (full) subcategory of projective (resp. injective) objects in $\A$. In addition, we denote by $\D^{b}(\A)$ the bounded derived category of $\A$.
For a right noetherian ring  $R$, we denote by $\Mod R$ the category of right $R$-modules, and $\mod R$ the subcategory of $\Mod R$ consisting of finitely generated right $R$-modules. For simplicity, we denote $\proj(\Mod R)$, $\proj(\mod R)$, $\inj(\Mod R)$, and $\inj(\mod R)$ by $\Proj R$, $\proj R$, $\Inj R$, and $\inj R$ respectively.
We omit subscripts and superscripts unless confusion might occur.
\end{conv}

\section{Preliminaries}
In this section, we recall various fundamental definitions needed to state and prove the main theorem.
Let us begin with the notion of (co)resolving subcategories of an abelian category.
\begin{dfn}
Let $\A$ be an abelian category.
\begin{enumerate}[\rm(1)]
\item
A subcategory $\X$ of $\A$ is {\em resolving} if the following conditions hold.
\begin{enumerate}[\rm(a)]
\item
$\X$ contains $\proj \A$.
\item
$\X$ is closed under direct summands in $\A$, namely, if there exists an isomorphism $X \cong Y\oplus Z$ in $\A$ such that $X \in \X$, then $Y$ and $Z$ belong to $\X$.
\item
$\X$ is closed under extensions in $\A$, namely, if there exists an exact sequence $0\to X'' \to X \to X' \to 0$ such that $X''$, $X'$ $\in \X$, then $X$ belongs to $\X$.
\item
$\X$ is closed under kernels of epimorphisms in $\A$, namely, if there exists an exact sequence $0\to X'' \to X \to X' \to 0$ such that $X$, $X'$ $\in \X$, then $X''$ belongs to $\X$.
\end{enumerate}
\item
Dually, a subcategory $\X$ of $\A$ is {\em coresolving} if $\X$ contains $\inj \A$ and is closed under direct summands, extensions, and cokernels of monomorphisms in $\A$.
\item
Let $\X$ be a subcategory of $\A$.
Denote by $^{\perp}\X$ the subcategory of $\A$ consisting of objects $M$ satisfying that $\Ext_{\A}^{i}(M,\X) = 0$ for all $i>0$.
Dually, we denote by $\X^{\perp}$ the subcategory of $\A$ consisting of objects $M$ satisfying that $\Ext_{\A}^{i}(\X,M) = 0$ for all $i>0$.
\end{enumerate}
\end{dfn}
\begin{rmk}\label{rmk for resolving}
Let $\A$ be an abelian category, and $\X$ a subcategory of $\A$.
\begin{enumerate}[\rm(1)]
\item
Assume that $\X$ is closed under kernels of epimorphisms (resp. cokernels of monomorphisms), and $A,X$ are objects in $\A$ with $A\oplus X \in \X$, and $X\in\X$. Then by considering the exact sequence $0\to A\to A\oplus X \to X\to0$ (resp. $0\to X\to X\oplus A \to A\to0$), we have $A\in \X$.
\item
The subcategory $^{\perp}\X$ is a resolving subcategory of $\A$. Dually, the subcategory $\X^{\perp}$ is a coresolving subcategory of $\A$.
\end{enumerate}
\end{rmk}
We recall the concept of syzygies of an object in an abelian category for later use.

\begin{dfn}
Let $\A$ be an abelian category with enough projective objects, and $M$ an object in $\A$.
Take a $\proj \A$-resolution
$P=\left(\cdots \to P^{-i} \xrightarrow{d^{-i}_{P}} P^{-i+1} \xrightarrow{d^{-i+1}_{P}} \cdots \xrightarrow{d^{1}_{P}} P^{0}\to 0\right)$
of $M$.
For a positive integer $n$, we define the {\em $n$-th syzygy} of $M$, denoted by $\Omega^{n}M$, as the image of $d^{-n}_{P}$.
We set $\Omega^{0}M = M$. Note that the $n$-th syzygy of $M$ is uniquely determined up to projective summands by Schanuel's lemma.
\end{dfn}
Next, we recall various objects and truncations associated with a (cochain) complex.
\begin{dfn}
Let $\A$ be an abelian category, and $X=\left(\cdots \to X^{n-1}\xrightarrow{d_{X}^{n-1}} X^{n} \xrightarrow{d_{X}^{n}}X^{n+1}\xrightarrow{d_{X}^{n+1}}\cdots\right)$ a complex of objects in $\A$.
\begin{enumerate}[\rm(1)]
\item
For each integer $n$, we set 
$\cycle^{n}\!X = \ker d_{X}^{n}$,
$\cobound^{n}\!X = \coker d_{X}^{n-1}$,
$\bound^{n}\!X = \image d_{X}^{n-1}$, and
$\homol^{n}\!X = \cycle^{n}\!X/\bound^{n}\!X$.
In addition, we set
$\sup X = \sup\{i\in\ZZ \mid \homol^{i}\!X \neq 0\}$, and
$\inf X = \inf\{i\in\ZZ \mid \homol^{i}\!X \neq 0\}$.
\item 
Let $\X$ be a subcategory of $\A$ that contains a zero object.
We denote by $C(\X)$ the category of complexes of objects in $\X$. 
In addition, we denote by $C^{+}(\X)$, $C^{-}(\X)$, and $C^{b}(\X)$, the subcategory of $C(\X)$ whose complexes are left-bounded, right-bounded, and bounded respectively. When stating properties that hold in common, we denote them by $C^{*}(\X)$, where $*=+,-,b,\emptyset$.
\item
Let $n$ be an integer.
The {\em soft truncation} $\tau^{\geq n}X$ (resp. $\tau^{\leq n}X$) of $X$ at $n$ is defined as follows.
\begin{center}
$ X \to \tau^{\geq n}X = \left(0\to \cobound^{n}\!X\xrightarrow{}X^{n+1}\xrightarrow{d_{X}^{n+1}}X^{n+2}\xrightarrow{d_{X}^{n+2}}\cdots \right)$

$\left(\text{resp. } X \leftarrow\tau^{\leq n}X = \left(\cdots\to X^{n-2}\xrightarrow{d_{X}^{n-2}}X^{n-1}\xrightarrow{}\cycle^{n}\!X \to 0\right)\right)$.
\end{center}
Similarly, the {\em hard truncation} $\sigma^{\geq n}X$ (resp. $\sigma^{\leq n}X$) of $X$ at $n$ is defined as follows.
\begin{center}
$ X \leftarrow \sigma^{\geq n}X = \left(0\to X^{n}\xrightarrow{d_{X}^{n}}X^{n+1}\xrightarrow{d_{X}^{n+1}}X^{n+2}\xrightarrow{d_{X}^{n+2}}\cdots \right)$

$\left(\text{resp. } X \to\sigma^{\leq n}X = \left(\cdots\to X^{n-2}\xrightarrow{d_{X}^{n-2}}X^{n-1}\xrightarrow{d_{X}^{n-1}}X^{n} \to 0\right)\right)$.
\end{center}
\end{enumerate}
\end{dfn}
\begin{rmk}\label{rmk for truncation}
Let $X$ be a complex of objects in $\A$ with $s=\sup X$ $\in\mathbb{Z}$ (resp. $i=\inf X$ $\in\mathbb{Z}$). Then for all integers $n\geq s$ (resp. $n\leq i$), the natural morphism $X \leftarrow\tau^{\leq n}X$ (resp. $X \to \tau^{\geq n}X$) gives a quasi-isomorphism.
Similarly, for all integers $n\geq s$ (resp. $n\leq i$), the natural morphism $\cycle^{n}\!X\left[-n\right] \to \sigma^{\geq n}X$ (resp. $\sigma^{\leq n}X \to \cobound^{n}\!X\left[-n\right]$) gives a quasi-isomorphism.
\end{rmk}
We now define the (co)resolution dimension of an object in the derived category of an abelian category. 
For an abelian category $\A$, we denote by $\D^{+}(\A)$ (resp. $\D^{-}(\A)$) the subcategory of the derived category of $\A$ consisting of complexes with left-bounded (resp. right-bounded) cohomologies.
\begin{dfn}\label{def of resoldim}
Let $\A$ be an abelian category, and $\X$ a subcategory of $\A$.
For a nonzero object $M$ in $\D^{-}(\A)$, we define the $\X${\em -resolution dimension} of $M$, denoted by $\X\resoldim M$, as the infimum of integers $n$ such that there exists a complex 
\begin{center}
$X=\left(0\to X^{-n} \to X^{-n+1}\to\cdots\to X^{\sup M}\to0\right)$
\end{center}
with $X\cong M$ in $\D^{-}(\A)$, and $X^{i}\in\X$ for all $-n\leq i\leq\sup M$.
We set $\X\resoldim 0=-\infty$.
Dually, For a nonzero object $M$ in $\D^{+}(\A)$, we define the $\X${\em -coresolution dimension} of $M$, denoted by $\X\coresoldim M$, as the infimum of integers $n$ such that there exists a complex 
\begin{center}
$X=\left(0\to X^{\inf M} \to\cdots\to X^{n-1}\to X^{n}\to0\right)$
\end{center}
with $X\cong M$ in $\D^{+}(\A)$, and $X^{i}\in\X$ for all $\inf M\leq i\leq n$.
We set $\X\coresoldim 0=-\infty$.
\end{dfn}
\begin{rmk}\label{rmk for resoldim}
\begin{enumerate}[\rm(1)]
\item
For an object $M$ in $\D^{-}(\A)$, one has
\begin{center}
$\X\resoldim M = \inf\{-\inf\{n\in\mathbb{Z}\mid X^{n}\neq 0\}\mid X\in C^{-}(\X)\text{ and } X\cong M \text{ in } \D^{-}(\A)\}$.
\end{center}
Dually, for an object $M$ in $\D^{+}(\A)$, one has
\begin{center}
$\X\coresoldim M = \inf\{\sup\{n\in\mathbb{Z}\mid X^{n}\neq 0\}\mid X\in C^{+}(\X)\text{ and } X\cong M \text{ in } \D^{+}(\A)\}$.
\end{center}
\item
When $\X=\proj\A$, then the $\X$-resolution dimension of an object $M\in\D^{-}(\A)$ coincides with the projective dimension of $M$. Dually, when $\X=\inj\A$, then the $\X$-coresolution dimension of an object $M\in\D^{+}(\A)$ coincides with the injective dimension of $M$.
\item 
If $M$ is an object in $\A$, then by definition one has $\X\resoldim M \in \mathbb{Z}_{\geq 0}\cup \left\{\pm\infty\right\}$, and $\X\coresoldim M \in \mathbb{Z}_{\geq 0}\cup \left\{\pm\infty\right\}$.
In addition, each object in $\X$ has $\X$-resolutuion dimension at most zero.
\item
Assume that $\A$ has enough projective (resp. injective) objects. By considering a projective (resp. injective) resolution of each object in $\D^{b}(\A)$, we have the following equivalence of categories:
\begin{center}
$\D^{b}(\A)\cong \K^{-,b}(\proj\A)$\quad (resp. $\D^{b}(\A)\cong\K^{+,b}(\inj\A))$,
\end{center}
where $\K^{-,b}(\proj\A)$ (resp. $\K^{+,b}(\inj\A)$) is the right-bounded (resp. left-bounded) homotopy category of $\proj\A$ (resp. $\inj\A$) consisting of complexes with bounded cohomologies.
\end{enumerate}
\end{rmk}
We close this section by recalling the definitions of several basic concepts, including the notion of the level of an object in a triangulated category.
\begin{dfn}
Let $\T$ be a triangulated category.
\begin{enumerate}[\rm(1)]
\item
For an additive category $\mathcal{C}$ and a subcategory $\X$ of $\mathcal{C}$, we denote by $\add_{\mathcal{C}}\X$ the {\em additive closure} of $\X$ in $\mathcal{C}$, namely, the subcategory of $\mathcal{C}$ consisting of direct summands of finite direct sums of objects in $\X$.
\item
A subcategory $\mathcal{C}$ of $\T$ is {\em thick} if $\mathcal{C}$ is closed under shifts, cones, and direct summands in $\T$. In addition, for a subcategory $\X$ of $\T$, we denote by $\thick_{\T} \X$ the {\em thick closure} of $\X$ in $\T$. That is, the smallest thick subcategory of $\T$ containing $\X$.
\item
Let $\X$ be a subcategory of $\T$.
We denote by ${\langle\X\rangle}^{\T}$ the additive closure of the subcategory of $\T$ consisting of objects of the form $\Sigma^{n}X$, where $\Sigma$ is a shift functor on $\T$, $n \in \mathbb{Z}$, and $X\in\X$.
\item
For subcategories $\X$, $\Y$ of $\T$, we define $\X\ast\Y$ as the subcategory of $\T$ consisting of objects $E$ such that there exists an exact triangle $X\to E\to Y\to\Sigma X$ with $X\in\X$, and $Y\in\Y$.
\item
Let $\X$ be a subcategory of $\T$, and $r$ a nonnegative integer.
We inductively define ${\langle\X\rangle}_{r}^{\T}$ as follows.
\begin{equation}
{\langle \X \rangle}_{r}^{\T} = \nonumber
\begin{cases}
0 & \text{if $r=0$,} \\
{\langle \X \rangle}^{\T} & \text{if $r=1$,} \\
\left\langle{{\langle \X \rangle}_{r-1}^{\T}} \ast {\langle \X \rangle}^{\T}\right\rangle^{\T} & \text{if $r>1$.}
\end{cases}
\end{equation}
\item
For a subcategory $\X$ of $\T$ and an object $M$ in $\T$, we define the $\X$-{\em level} of $M$ in $\T$, denoted by $\level_{\T}^{\X}M$, as the infimum of nonnegative integers $n$ such that $M$ belongs to ${\langle\X\rangle}_{n}^{\T}$.
That is,
\begin{center}
$\level_{\T}^{\X}M=\inf\{n\geq0\mid M\in {\langle\X\rangle}_{n}^{\T} \}$.
\end{center}
\end{enumerate}
\end{dfn}
\begin{rmk}\label{rmk for thick}
Let $\T$ be a triangulated category, and $\X,\Y$ subcategories of $\T$.
\begin{enumerate}[\rm(1)]
\item 
For an object $M$ in $\T$, $M$ belongs to $\left\langle{{\langle \X \rangle}^{\T}} \ast {\langle \Y \rangle}^{\T}\right\rangle^{\T}$ if and only if there exists an exact triangle $X\to E\to Y\to \Sigma X$ in $\T$ such that $X\in{\langle \X \rangle}^{\T}$, $Y\in{\langle \Y \rangle}^{\T}$, and $M$ is a direct summand of $E$.
\item
There exists an ascending chain of subcategories of $\T$ as follows:
\begin{center}
$0={\langle \X \rangle}_{0}^{\T} \subseteq {\langle \X \rangle}_{1}^{\T} \subseteq \cdots\subseteq {\langle \X \rangle}_{n}^{\T}\subseteq\cdots\subseteq \displaystyle\bigcup_{i\geq0}{\langle \X \rangle}_{i}^{\T} = \thick_{\T}\X$.
\end{center}
\item
Let $\mathcal{S}$ be a thick subcategory of $\T$ that contains $\X$. Then one has $\level_{\T}^{\X}M=\level_{\mathcal{S}}^{\X}M$ for all objects $M\in\T$.
\end{enumerate}
\end{rmk}
\section{Proof of main theorem}
This section is devoted to proving our main result, Theorem \ref{main thm llevel}.

To establish Lemma \ref{CFH 2006 3.1.2}, we provide the following lemma, which is proved by using the techniques of \cite[Theorem 3.1]{Christensen Frankild Holm}.
\begin{lem}\label{CFH 2006 3.1.1}
Let $\A$ be an abelian category, and $\X$ a subcategory of $\A$ that contains $\proj\A$ and is closed under extensions and kernels of epimorphisms in $\A$.
Let $f:A\to B$ be a quasi-isomorphism of complexes in $C^{-}(\X)$
Then for all integers $n$,
$\cobound^{n}\!A$ is in $\X$
if and only if $\cobound^{n}\!B$ is in $\X$.
\end{lem}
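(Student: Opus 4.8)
The plan is to isolate the combinatorial content in two elementary observations and then feed the given quasi-isomorphism into them via the mapping cylinder. Throughout one uses that $0\in\X$ (since $0\in\proj\A\subseteq\X$) and hence, by closure under extensions applied to $0\to X\to X\oplus Y\to Y\to 0$, that $\X$ is closed under finite direct sums.

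\emph{Two auxiliary facts.} (a) If $K\in C^{-}(\X)$ is acyclic then $\cycle^{n}\!K\in\X$ and $\cobound^{n}\!K\in\X$ for all $n$: since $K$ is right-bounded, $\cycle^{n}\!K=0$ for $n\gg 0$, and acyclicity gives exact sequences $0\to\cycle^{n}\!K\to K^{n}\to\cycle^{n+1}\!K\to 0$, so a descending induction on $n$ using closure under kernels of epimorphisms yields $\cycle^{n}\!K\in\X$, whence $\cobound^{n}\!K\cong\cycle^{n+1}\!K\in\X$. (b) If $0\to P'\to P\to P''\to 0$ is exact in $C^{-}(\X)$ with $P''$ acyclic, then $0\to\cobound^{n}\!P'\to\cobound^{n}\!P\to\cobound^{n}\!P''\to 0$ is exact for every $n$, and hence $\cobound^{n}\!P\in\X$ if and only if $\cobound^{n}\!P'\in\X$. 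To see the exactness, apply the snake lemma to the rows of the short exact sequence in degrees $n-1$ and $n$ to get $0\to\cycle^{n-1}\!P'\to\cycle^{n-1}\!P\to\cycle^{n-1}\!P''\xrightarrow{\partial}\cobound^{n}\!P'\to\cobound^{n}\!P\to\cobound^{n}\!P''\to 0$; the point is that $\partial=0$, because a cocycle of $P''$ in degree $n-1$ is a coboundary (acyclicity of $P''$), so it lifts to a coboundary $b$ of $P$, and then $d_{P}^{n-1}b=0$ forces $\partial$ to send it to $0$. Granting this, the stated equivalence follows from $\cobound^{n}\!P''\in\X$ (by (a)) together with closure under extensions and under kernels of epimorphisms.

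\emph{Conclusion.} As $f$ is a quasi-isomorphism, $\cone(f)$ is acyclic. Let $\operatorname{Cyl}(f)$ be the mapping cylinder of $f$; its $n$-th term $A^{n+1}\oplus A^{n}\oplus B^{n}$ is a finite direct sum of terms of $A$ and $B$ and so lies in $\X$, and $\operatorname{Cyl}(f)$ is right-bounded, so $\operatorname{Cyl}(f)\in C^{-}(\X)$. The canonical exact sequence $0\to A\to\operatorname{Cyl}(f)\to\cone(f)\to 0$ has acyclic third term, so (b) gives $\cobound^{n}\!A\in\X\Leftrightarrow\cobound^{n}\!\operatorname{Cyl}(f)\in\X$. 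On the other hand the canonical projection $\pi\colon\operatorname{Cyl}(f)\to B$ is a surjective morphism of complexes split by the canonical inclusion $B\hookrightarrow\operatorname{Cyl}(f)$, so $\operatorname{Cyl}(f)\cong B\oplus K$ as complexes with $K=\ker\pi$; moreover $\pi$ is a homotopy equivalence, so $K$ is acyclic and $\cobound^{n}\!K\in\X$ by (a). Hence $\cobound^{n}\!\operatorname{Cyl}(f)\cong\cobound^{n}\!B\oplus\cobound^{n}\!K$, and closure under extensions together with Remark~\ref{rmk for resolving}(1) gives $\cobound^{n}\!\operatorname{Cyl}(f)\in\X\Leftrightarrow\cobound^{n}\!B\in\X$. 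Chaining the two equivalences proves the lemma.

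The step I expect to be the main obstacle is precisely this reduction through $\operatorname{Cyl}(f)$: one cannot argue from $\cone(f)$ directly, since in the short exact sequence $0\to B\to\cone(f)\to A[1]\to 0$ it is the \emph{middle} term that is acyclic, and the associated connecting map $\cycle^{n}\!A\to\cobound^{n}\!B$ does not vanish in general; replacing $f$ by a monomorphism with acyclic cokernel followed by a split epimorphism with acyclic kernel is what makes the cohomological degrees line up so that (b) and a direct-summand argument apply.
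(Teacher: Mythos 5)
Your proof is correct, but it takes a genuinely different route from the paper's. The paper truncates first: the induced map $f'\colon\tau^{\geq n}A\to\tau^{\geq n}B$ is still a quasi-isomorphism, so $\cone(f')$ is a bounded acyclic complex of the explicit shape $\bigl(0\to\cobound^{n}\!A\to\cobound^{n}\!B\oplus A^{n+1}\to B^{n+1}\oplus A^{n+2}\to\cdots\to 0\bigr)$; descending through its tail (whose terms lie in $\X$) yields in one step a short exact sequence $0\to\cobound^{n}\!A\to\cobound^{n}\!B\oplus A^{n+1}\to X\to 0$ with $X\in\X$, and both implications follow immediately from closure under extensions and under kernels of epimorphisms. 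You instead factor $f$ through the mapping cylinder into a degreewise-split monomorphism $A\hookrightarrow\operatorname{Cyl}(f)$ with acyclic cokernel $\cone(f)$ and a split epimorphism $\operatorname{Cyl}(f)\twoheadrightarrow B$ with acyclic kernel, handling each leg by your facts (a) and (b). Both arguments work and rest on the same closure properties, but the paper's is more compact and avoids the snake lemma, while yours is more modular and isolates reusable auxiliary statements. One small remark on (b): the vanishing of the connecting map $\partial$ is most cleanly seen as the assertion that $\cycle^{n-1}\!P\to\cycle^{n-1}\!P''$ is already an epimorphism, which holds because $\cycle^{n-1}\!P''=\bound^{n-1}\!P''$ is the image of $P^{n-2}\to P''^{n-1}$ and $d_{P}^{n-2}$ factors through $\cycle^{n-1}\!P$; your element chase is equivalent but in a general abelian category should be understood via pseudoelements or a Freyd--Mitchell embedding.
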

\begin{proof}
Fix an integer $n$.
Since $f$ is a quasi-isomorphism, the induced morphism 
$f' : \tau^{\geq n}A \to \tau^{\geq n}B$ is also a quasi-isomorphism.
Hence the mapping cone of $f'$ is acyclic and of the following form.
\begin{center}
$\cone(f')=\left(
0\to\cobound^{n}\!A\to \cobound^{n}\!B \oplus A^{n+1} \to B^{n+1}\oplus A^{n+2}\to\cdots\to 0
\right)$.
\end{center}
As $\X$ is closed under finite direct sums and kernels of epimorphisms in $\A$, there exists an exact sequence
\begin{center}
$0\to\cobound^{n}\!A\to \cobound^{n}\!B \oplus A^{n+1} \to X \to 0$
\end{center}
such that the object $X$ belongs to $\X$.

Assume that $\cobound^{n}\!A$ belongs to $\X$.
Then $\cobound^{n}\!B \oplus A^{n+1}$ belongs to $\X$ as $\X$ is closed under extensions.
Hence $\cobound^{n}\!B$ is the kernel of the epimorphism $\cobound^{n}\!B \oplus A^{n+1} \to A^{n+1}$ in $\X$, and thus $\cobound^{n}\!B$ belongs to $\X$.
The converse is an immediate consequence of the above short exact sequence.
\end{proof}
The following lemma plays an important role in determining the finiteness of the resolution dimension of complexes. In the case of Gorenstein projective dimension, it is stated in \cite[Theorem 3.1]{Christensen Frankild Holm}.
\begin{lem}\label{CFH 2006 3.1.2}
Let $\A$ be an abelian category with enough projective objects, and $\X$ a subcategory of $\A$ that contains $\proj\A$ and is closed under extensions and kernels of epimorphisms in $\A$.
Let $M$ be an object in $\D^{b}(\A)$, and $n$ an integer.
Then the following are equivalent.
\begin{enumerate}[\rm(1)]
\item
The inequality $\X\resoldim M \leq n$ holds.
\item
The inequality $-\inf M \leq n$ holds, and for all complexes $X$ in $C^{-}(\X)$ with $X \cong M$ in $\D^{b}(\A)$,
the object $\cobound^{-n}\!X$ is in $\X$.
\end{enumerate}
\end{lem}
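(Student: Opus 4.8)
The plan is to prove the two implications separately, leaning on Lemma \ref{CFH 2006 3.1.1} for the hard direction and on the description of $\X\resoldim$ in Remark \ref{rmk for resoldim}(1) together with the truncation facts in Remark \ref{rmk for truncation} for the bookkeeping with cohomological degrees. Throughout, recall that since $\A$ has enough projective objects, every $M\in\D^{b}(\A)$ is isomorphic in $\D^{b}(\A)$ to some complex in $C^{-}(\X)$ (a projective resolution does the job, and $\proj\A\subseteq\X$), so the family of complexes $X\in C^{-}(\X)$ with $X\cong M$ is nonempty; also $\inf M=\inf X$ for any such $X$, hence the inequality $-\inf M\le n$ is equivalent to $X^{i}=0$ being consistent with $\homol^{i}X=0$ for $i<-n$. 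Note $\cobound^{-n}\!X=\coker d_{X}^{-n-1}$, and by Remark \ref{rmk for truncation} the hard truncation $\sigma^{\le -n}X\to\cobound^{-n}\!X[n]$ is a quasi-isomorphism when $-n\le\inf X$.

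For (1)$\Rightarrow$(2): assume $\X\resoldim M\le n$. Then $-\inf M\le n$ is immediate from Definition \ref{def of resoldim} (the displayed complex there starts in degree $-n$ and represents $M$, so $\homol^{i}M=0$ for $i<-n$). By definition there is a complex $A=(0\to A^{-n}\to\cdots\to A^{\sup M}\to 0)$ with all $A^{i}\in\X$ and $A\cong M$ in $\D^{b}(\A)$; here $\cobound^{-n}\!A=A^{-n}\in\X$ since $d_{A}^{-n-1}=0$. Now let $X\in C^{-}(\X)$ be an arbitrary complex with $X\cong M$ in $\D^{b}(\A)$. Since $\A$ has enough projectives, we may realize the isomorphism $X\cong A$ in $\D^{b}(\A)$ by a roof through a common left-bounded complex of projectives, or — more directly — pass to projective resolutions $P\to X$ and $P'\to A$ and use that $P$ and $P'$ are homotopy equivalent; but the clean route is simply to invoke Lemma \ref{CFH 2006 3.1.1}: $X$ and $A$ are both in $C^{-}(\X)$ and become isomorphic in the derived category, hence (after replacing the roof's apex, which can be taken in $C^{-}(\X)$ by resolving) there is a quasi-isomorphism connecting them, and Lemma \ref{CFH 2006 3.1.1} transfers the membership $\cobound^{-n}\!A\in\X$ to $\cobound^{-n}\!X\in\X$.

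For (2)$\Rightarrow$(1): assume $-\inf M\le n$ and that $\cobound^{-n}\!X\in\X$ for every $X\in C^{-}(\X)$ representing $M$. Fix one such $X$ (it exists by the enough-projectives remark). Consider the hard truncation $Y:=\sigma^{\ge -n}X=(0\to\cobound^{-n}\!X\to X^{-n+1}\to X^{-n+2}\to\cdots)$, where we have replaced $X^{-n}$ by $\cobound^{-n}\!X$ in degree $-n$; concretely $Y$ is the complex $(0\to\cobound^{-n}\!X\to X^{-n+1}\to\cdots)$ with the induced differential. Every term of $Y$ lies in $\X$ by hypothesis and since $X^{i}\in\X$ for $i>-n$. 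Because $-n\le\inf X=\inf M$, Remark \ref{rmk for truncation} gives a quasi-isomorphism identifying this truncation with $M$ in $\D^{b}(\A)$; and since $\sup Y=\sup X$ can be taken to be $\sup M$ after a further soft truncation at the top (which does not leave $\X$ because $\cycle^{s}\!X$ for $s\ge\sup M$ lies in $\X$ as a kernel of an epimorphism between terms of $\X$ — this uses closedness under kernels of epimorphisms), we obtain a complex concentrated in degrees $[-n,\sup M]$ with terms in $\X$ and isomorphic to $M$. Hence $\X\resoldim M\le n$ by Definition \ref{def of resoldim}.

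The main obstacle is the step in (1)$\Rightarrow$(2) that compares an \emph{arbitrary} representative $X\in C^{-}(\X)$ with the \emph{chosen} bounded $\X$-complex $A$: a priori an isomorphism in $\D^{b}(\A)$ is only a zigzag, not a genuine quasi-isomorphism $X\to A$ or $A\to X$, whereas Lemma \ref{CFH 2006 3.1.1} is phrased for an honest quasi-isomorphism of complexes in $C^{-}(\X)$. The fix is to note that the apex of the roof can itself be taken in $C^{-}(\X)$ — for instance the mapping cylinder construction, or a projective resolution of the apex, both of which have terms in $\proj\A\subseteq\X$ — so that both legs of the roof are quasi-isomorphisms in $C^{-}(\X)$ and Lemma \ref{CFH 2006 3.1.1} applies twice (once to push $\cobound^{-n}\in\X$ down each leg). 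The remaining details — that hard truncation at $-n\le\inf M$ is a quasi-isomorphism, and that $\cycle$ and $\cobound$ objects stay in $\X$ — are exactly the closure hypotheses on $\X$ and are routine.
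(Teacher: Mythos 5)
Your argument is correct and follows essentially the same route as the paper: for $(1)\Rightarrow(2)$, use a projective resolution $P$ as a common apex, lift the derived-category isomorphisms to genuine quasi-isomorphisms $P\to A$ and $P\to X$ (possible since $\Hom_{\K(\A)}(P,-)\cong\Hom_{\D(\A)}(P,-)$), and apply Lemma \ref{CFH 2006 3.1.1} twice; for $(2)\Rightarrow(1)$, soft-truncate a representing complex at degree $-n$. One small slip: the complex $(0\to\cobound^{-n}X\to X^{-n+1}\to\cdots)$ is the \emph{soft} truncation $\tau^{\ge -n}X$, not the hard truncation $\sigma^{\ge -n}X$ (which would retain $X^{-n}$ itself); also, the paper sidesteps your extra top-truncation argument by taking from the outset a projective resolution $P$ with $P^i=0$ for $i>\sup M$, which lands directly in the form required by Definition \ref{def of resoldim}.
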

\begin{proof}
(2)$\Rightarrow$(1):
Take a $\proj\A$-resolution $P\xrightarrow{\cong}M$ with $P\in\K^{-,b}(\proj\A)$, and $P^{i}=0$ for all $i>\sup M$.
Then by assumption, $\cobound^{-n}\!P$ belongs to $\X$, and $-n\leq\inf M=\inf P$ holds.
Hence we have the isomorphisms
\begin{center}
$M\cong P\cong\tau^{\geq -n}P=\left(0\to\cobound^{-n}\!P\to P^{-n+1}\to\cdots\to P^{\sup M}\to 0\right)$
\end{center}
in $\D^{b}(\A)$. 
This implies that $M$ has $\X$-resolution dimension at most $n$.

(1)$\Rightarrow$(2): 
By assumption, $M$ is isomorphic to a complex $A=\left(0\to A^{-n}\to\cdots\to A^{\sup M}\to 0\right)$ in $\D^{b}(\A)$ with $A^{i} \in \X$ for all $i$.
Then one has $\cobound^{-n}\!A=A^{-n}\in\X$, and $-n\leq\inf A=\inf M$.
Assume that $M$ is isomorphic to a complex $X$ in $\D^{b}(\A)$ with $X\in C^{-}(\X)$.
Take a $\proj\A$-resolution $P\xrightarrow{\cong}M$ with $P\in\K^{-,b}(\proj\A)$, and $P^{i}=0$ for all $i>\sup M$.
Since $P$ is a right-bounded complex with projective objects in $\A$, the natural homomorphism $\Hom_{\K(\A)}(P,Y)\to\Hom_{\D(\A)}(P,Y)$ is an isomorphism for each object $Y$ in $\D(\A)$.
Thus there exist quasi-isomorphisms $P\to A$, and $P\to X$.
Hence by Lemma \ref{CFH 2006 3.1.1}, $\cobound^{-n}\!P$ belongs to $\X$, and so does $\cobound^{-n}\!X$.
\end{proof}
In Lemma \ref{CFH 2006 3.1.2}, restricting to the case where $M$ is an object in $\A$, we obtain the following result, which characterizes the resolution dimension using syzygies of $M$ (cf. \cite[Lemma 4.3]{stcm}).
\begin{cor}\label{CFH 2006 3.1.2 cor}
Let $\A$ be an abelian category with enough projective objects, and $\X$ a subcategory of $\A$ that contains $\proj\A$ and is closed under extensions and kernels of epimorphisms in $\A$.
Let $A$ be an object in $\A$, and $n$ a nonnegative integer.
Then the following are equivalent.
\begin{enumerate}[\rm(1)]
\item
The inequality $\X\resoldim A \leq n$ holds.
\item
The object $\Omega^{n}A$ is in $\X$.
\end{enumerate}
\end{cor}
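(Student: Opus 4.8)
The plan is to deduce the corollary from Lemma~\ref{CFH 2006 3.1.2} by taking for $M$ the stalk complex $A$ placed in degree~$0$. If $A=0$ there is nothing to show, since $\X\resoldim 0=-\infty\le n$ and $0\in\proj\A\subseteq\X$; so assume $A\neq 0$, in which case $\inf A=\sup A=0$ and the inequality $-\inf A\le n$ occurring in condition~(2) of Lemma~\ref{CFH 2006 3.1.2} holds automatically because $n\geq 0$. Hence, by Lemma~\ref{CFH 2006 3.1.2}, the inequality $\X\resoldim A\le n$ is equivalent to the statement that $\cobound^{-n}\!X\in\X$ for every complex $X\in C^{-}(\X)$ with $X\cong A$ in $\D^{b}(\A)$, and the task is reduced to showing that this statement is equivalent to $\Omega^{n}A\in\X$.

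To carry this out, I would first fix a $\proj\A$-resolution $\epsilon\colon P\to A$ with $P=(\cdots\to P^{-i}\to\cdots\to P^{0}\to 0)$ in $\K^{-,b}(\proj\A)$; since $\proj\A\subseteq\X$, this $P$ lies in $C^{-}(\X)$ and is isomorphic to $A$ in $\D^{b}(\A)$. The central point is the identification $\cobound^{-n}\!P\cong\Omega^{n}A$: for $n\geq 1$, exactness of the augmented resolution at $P^{-n}$ gives $\image d_{P}^{-n-1}=\ker d_{P}^{-n}$, so $\cobound^{-n}\!P=P^{-n}/\image d_{P}^{-n-1}\cong\image d_{P}^{-n}=\Omega^{n}A$, and for $n=0$ the augmentation yields $\cobound^{0}\!P=\coker d_{P}^{-1}\cong\image\epsilon=A=\Omega^{0}A$. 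Granting this, one direction is immediate: applying the above statement to $X=P$ gives $\Omega^{n}A\cong\cobound^{-n}\!P\in\X$. For the converse, assume $\Omega^{n}A\in\X$ and let $X\in C^{-}(\X)$ with $X\cong A$ in $\D^{b}(\A)$; since $P$ is a right-bounded complex of projective objects, $\Hom_{\K(\A)}(P,X)\to\Hom_{\D(\A)}(P,X)$ is bijective, so the isomorphism $P\cong A\cong X$ in $\D(\A)$ is induced by an honest quasi-isomorphism $P\to X$ of complexes in $C^{-}(\X)$, and Lemma~\ref{CFH 2006 3.1.1} together with $\cobound^{-n}\!P\cong\Omega^{n}A\in\X$ forces $\cobound^{-n}\!X\in\X$. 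This establishes the equivalence, and hence the corollary.

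I do not expect a serious obstacle here: the two points requiring care are the bookkeeping that matches $\cobound^{-n}\!P$ with the $n$-th syzygy $\Omega^{n}A$ (especially the degenerate case $n=0$, where $\Omega^{0}A=A$) and the standard move of representing the derived-category isomorphism by a genuine quasi-isomorphism $P\to X$ so that Lemma~\ref{CFH 2006 3.1.1} applies. For the implication $\Omega^{n}A\in\X\Rightarrow\X\resoldim A\le n$ one could also argue directly, without Lemma~\ref{CFH 2006 3.1.1}: the soft truncation $\tau^{\geq -n}P=(0\to\cobound^{-n}\!P\to P^{-n+1}\to\cdots\to P^{0}\to 0)$ is a complex in $C^{b}(\X)$ concentrated in degrees $-n,\dots,\sup A$ and quasi-isomorphic to $A$, so the bound falls straight out of the definition of $\X\resoldim$.
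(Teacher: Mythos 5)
Your proof is correct and follows essentially the same route as the paper: reduce to Lemma~\ref{CFH 2006 3.1.2} via the identification $\cobound^{-n}\!P\cong\Omega^{n}A$ for a chosen projective resolution $P$. The paper merely states this more tersely (disposing of $n=0$ with ``we may assume $n>0$'' and leaving the invocation of Lemma~\ref{CFH 2006 3.1.1} implicit in ``follows directly''), whereas you spell out both the $n=0$ bookkeeping and the lift of the derived-category isomorphism to a genuine quasi-isomorphism $P\to X$.
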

\begin{proof}
We may assume that $n>0$.
Take a $\proj\A$-resolution $P$ of $A$.
Then we have
$\Omega^{n}A = \bound^{-n+1}\!P \cong P^{-n}/\cycle^{-n}\!P = P^{-n}/\bound^{-n}\!P = \cobound^{-n}\!P$ as $P$ is exact at the degree $n$.
Hence the desired equivalence follows directly from Lemma \ref{CFH 2006 3.1.2}.
\end{proof}
The following lemma is stated in \cite[Lemma 3.1]{Awadalla Marley} for the case where the objects are modules.
However it holds in any abelian category with the same proof.
\begin{lem}[{\cite[Lemma 3.1]{Awadalla Marley}}]\label{AW 3.1}
Let $\A$ be an abelian category.
Consider the following commutative diagram in $\A$ with exact rows.
\begin{equation*}
\xymatrix{
0 \ar[r] & A \ar[r]^{f} \ar[d]^{\alpha} & B \ar[r]\ar[d] & C \ar[r]\ar[d]^{\gamma} & 0 \\
0 \ar[r] & D \ar[r] & E \ar[r] & F \ar[r] & 0.
}
\end{equation*}
Assume that there exists a morphism
$\phi : B \to D$ in $\A$ such that $\alpha = \phi f$, and the induced homomorphism 
$\Ext_{\A}^{1}(\gamma, D) : \Ext_{\A}^{1}(F, D) \to \Ext_{\A}^{1}(C, D)$ is injective.
Then the bottom row splits.
\end{lem}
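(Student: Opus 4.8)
The plan is to translate the conclusion ``the bottom row splits'' into the statement that the Yoneda class $\varepsilon\in\Ext^{1}_{\A}(F,D)$ of the lower extension vanishes, and then to force $\varepsilon=0$ by combining the two hypotheses with the naturality of the connecting homomorphisms in the long exact $\Ext$-sequences. Since Yoneda $\Ext$ and the associated long exact sequences exist in every abelian category, no projectives or injectives in $\A$ are needed.

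First I would fix notation: write the lower row as $0\to D\xrightarrow{\iota}E\xrightarrow{\pi}F\to0$, the upper row as $0\to A\xrightarrow{f}B\xrightarrow{g}C\to0$, and let $\beta\colon B\to E$ be the (unnamed) middle vertical arrow, so that $(\alpha,\beta,\gamma)$ is a morphism of short exact sequences. Applying $\Hom_{\A}(-,D)$ to the lower row yields an exact sequence $\Hom_{\A}(E,D)\xrightarrow{\Hom_{\A}(\iota,D)}\Hom_{\A}(D,D)\xrightarrow{\delta}\Ext^{1}_{\A}(F,D)$ with $\delta(\mathrm{id}_{D})=\varepsilon$. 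Since the lower row splits precisely when $\iota$ admits a retraction, i.e.\ when $\mathrm{id}_{D}$ lies in the image of $\Hom_{\A}(\iota,D)=\ker\delta$, the lower row splits if and only if $\varepsilon=0$. Hence it suffices to prove $\varepsilon=0$.

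The second step is to compute $\Ext^{1}_{\A}(\gamma,D)(\varepsilon)$. Naturality of the connecting homomorphism for $\Hom_{\A}(-,D)$, applied to the morphism $(\alpha,\beta,\gamma)$ of short exact sequences, produces a commutative square relating $\delta$ to the connecting map $\delta'\colon\Hom_{\A}(A,D)\to\Ext^{1}_{\A}(C,D)$ of the upper row; evaluating this square at $\mathrm{id}_{D}$ and using $\delta(\mathrm{id}_{D})=\varepsilon$ gives $\Ext^{1}_{\A}(\gamma,D)(\varepsilon)=\delta'(\alpha)$ up to sign. Now the hypothesis $\alpha=\phi f$ says exactly that $\alpha=\Hom_{\A}(f,D)(\phi)$ lies in the image of $\Hom_{\A}(f,D)\colon\Hom_{\A}(B,D)\to\Hom_{\A}(A,D)$, which equals $\ker\delta'$ by exactness; therefore $\delta'(\alpha)=0$, so $\Ext^{1}_{\A}(\gamma,D)(\varepsilon)=0$. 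As $\Ext^{1}_{\A}(\gamma,D)$ is injective by hypothesis, $\varepsilon=0$ and the lower row splits.

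The argument is essentially formal, so I do not anticipate a genuine obstacle; the only points demanding care are the standard facts about Yoneda $\Ext$ in an arbitrary abelian category, namely the identification $\delta(\mathrm{id}_{D})=\varepsilon$ and the naturality square for the connecting homomorphism. If one prefers to avoid this machinery, the same conclusion follows from a direct diagram chase: the morphism $\beta-\iota\phi\colon B\to E$ satisfies $(\beta-\iota\phi)f=\beta f-\iota\alpha=0$ by commutativity of the left-hand square and $\alpha=\phi f$, hence factors as $\psi g$ for a unique $\psi\colon C\to E$, and one checks $\pi\psi=\gamma$ using commutativity of the right-hand square; then $\psi$ together with $\mathrm{id}_{C}$ induces a section of the projection $E\times_{F}C\to C$, so the pullback of the lower row along $\gamma$ is split, and injectivity of $\Ext^{1}_{\A}(\gamma,D)$ finishes the proof as before.
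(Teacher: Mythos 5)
Your argument is correct; the paper itself does not spell out a proof but simply cites Awadalla--Marley and remarks that the same proof works in any abelian category, and your first argument (showing $\Ext^{1}_{\A}(\gamma,D)(\varepsilon)=\delta'(\alpha)=0$ via naturality of the connecting map, then invoking injectivity) is exactly the expected standard argument. Your alternative diagram-chase via the pullback is also correct and amounts to the same computation of the pullback class.
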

The following lemma is an immediate consequence of Remark \ref{rmk for resolving}, and a special case of it is stated in \cite[Lemma 2.1]{Christensen Frankild Holm}.
\begin{lem}\label{CFH 2.1}
Let $\A$ be an abelian category, and $\X$ a subcategory of $\A$.
Then for each object in $\A$ with finite $\X^{\perp}$-resolution dimension belongs to $\X^{\perp}$.
\end{lem}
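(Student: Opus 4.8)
The plan is to translate ``$A$ has finite $\X^{\perp}$-resolution dimension'' into an honest finite exact sequence in $\A$ and then propagate membership in $\X^{\perp}$ along it. By Remark \ref{rmk for resoldim}(3), an object $A$ of $\A$ with finite $\X^{\perp}$-resolution dimension satisfies $\X^{\perp}\resoldim A = n$ for some $n \in \mathbb{Z}_{\geq 0}$ (the case $A = 0$ being trivial, since then $0 \in \X^{\perp}$). When $n = 0$, Definition \ref{def of resoldim} together with $\sup A = 0$ shows that $A$ is isomorphic in $\D^{-}(\A)$ to a complex concentrated in degree $0$ whose single term lies in $\X^{\perp}$; hence $A$ itself lies in $\X^{\perp}$, as subcategories are closed under isomorphism.

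For $n \geq 1$, Definition \ref{def of resoldim} provides a complex $X = \left(0 \to X^{-n} \to \cdots \to X^{-1} \to X^{0} \to 0\right)$ with each $X^{i} \in \X^{\perp}$ and $X \cong A$ in $\D^{-}(\A)$. Since $X$ is concentrated in nonpositive degrees with $\homol^{i}\!X = 0$ for $i \neq 0$ and $\homol^{0}\!X \cong A$, it unwinds into an exact sequence $0 \to X^{-n} \to X^{-n+1} \to \cdots \to X^{-1} \to X^{0} \to A \to 0$ in $\A$. Setting $C^{j} = \coker\!\left(X^{j-1} \to X^{j}\right)$ for $-n \leq j \leq 0$ — so that $C^{-n} = X^{-n}$ and $C^{0} = A$ — exactness splits this into short exact sequences $0 \to C^{j} \to X^{j+1} \to C^{j+1} \to 0$ for $-n \leq j \leq -1$.

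Finally I would invoke Remark \ref{rmk for resolving}(2), by which $\X^{\perp}$ is coresolving and in particular closed under cokernels of monomorphisms. Starting from $C^{-n} = X^{-n} \in \X^{\perp}$ and $X^{-n+1} \in \X^{\perp}$, the first short exact sequence yields $C^{-n+1} \in \X^{\perp}$; feeding this into the next one yields $C^{-n+2} \in \X^{\perp}$, and after $n$ steps we obtain $C^{0} = A \in \X^{\perp}$, as desired. There is no real obstacle here: the only point requiring a little care is the passage from the derived-category formulation of the resolution dimension to an exact sequence in $\A$, together with the routine verification that the cokernels $C^{j}$ fit into the displayed short exact sequences.
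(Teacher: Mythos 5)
Your proof is correct and spells out exactly the argument the paper leaves implicit: the paper merely remarks that the lemma follows from Remark \ref{rmk for resolving}(2) (that $\X^{\perp}$ is coresolving) and cites a special case in Christensen--Frankild--Holm, without writing out the induction. You unwind the finite $\X^{\perp}$-resolution into a finite exact sequence, split it into short exact sequences via the cokernels, and propagate membership in $\X^{\perp}$ step by step using closure under cokernels of monomorphisms — which is the standard dimension-shifting argument the paper has in mind.
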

The following lemma is a generalization of \cite[Lemma 3.2]{Awadalla Marley}. However, we omit the proof as the same argument is valid.
\begin{lem}\label{AM 3.2}
Let $\A$ be an abelian category, and 
$f : P \to Q$ a quasi-isomorphism of complexes in $C^{-}(\X)$.
Then for all objects $M$ in $\X^{\perp}$, positive integers $i$, and integers $v$, the induced homomorphisms
\begin{center}
$\Ext_{\A}^{i}(\cobound^{v}\!Q, M) \to \Ext_{\A}^{i}(\cobound^{v}\!P, M), \quad
\Ext_{\A}^{i}(\bound^{v}\!Q, M) \to \Ext_{\A}^{i}(\bound^{v}\!P, M)$
\end{center}
are isomorphisms.
\end{lem}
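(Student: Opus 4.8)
The plan is to reduce the boundary statement to the cobound statement, and then to prove the latter by comparing $P$ and $Q$ through the mapping cone of the soft truncation $\tau^{\geq v}f$. For the reduction, observe that for any complex $X$ and any integer $v$ there is a short exact sequence $0\to\bound^{v}\!X\to X^{v}\to\cobound^{v}\!X\to 0$, natural in $X$. Applying $\Hom_{\A}(-,M)$ and using that $X^{v}\in\X$ and $M\in\X^{\perp}$, so that $\Ext^{i}_{\A}(X^{v},M)=0$ for $i>0$, the long exact sequence produces a natural isomorphism $\Ext^{i}_{\A}(\bound^{v}\!X,M)\cong\Ext^{i+1}_{\A}(\cobound^{v}\!X,M)$ for all $i\geq 1$. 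Applying this to the morphism $f$ shows that $\Ext^{i}_{\A}(\bound^{v}\!f,M)$ is an isomorphism whenever $\Ext^{i+1}_{\A}(\cobound^{v}\!f,M)$ is. Hence it suffices to prove that $\Ext^{i}_{\A}(\cobound^{v}\!f,M)$ is an isomorphism for all $i\geq 1$, all integers $v$, and all $M\in\X^{\perp}$.

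To this end, fix $v$. The induced map $\tau^{\geq v}f\colon\tau^{\geq v}P\to\tau^{\geq v}Q$ is again a quasi-isomorphism, because $\tau^{\geq v}$ alters cohomology only in degrees $<v$, where both sides vanish. Since $P,Q$ lie in $C^{-}(\X)$, the mapping cone $E=\cone(\tau^{\geq v}f)$ is acyclic and bounded, and — exactly as computed in the proof of Lemma \ref{CFH 2006 3.1.1} — has the form
\[
E=\bigl(0\to\cobound^{v}\!P\to\cobound^{v}\!Q\oplus P^{v+1}\to Q^{v+1}\oplus P^{v+2}\to\cdots\to 0\bigr),
\]
in which $\cobound^{v}\!f$ occurs as a component of the first differential. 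Every term $E^{j}$ with $j\geq v+1$ is a finite direct sum of components of $P$ and $Q$, hence $\Ext^{i}_{\A}(E^{j},M)=0$ for all $i>0$ and all $j\geq v+1$. Splitting the acyclic complex $E$ into short exact sequences $0\to\bound^{j}\!E\to E^{j}\to\bound^{j+1}\!E\to 0$ and running a descending induction down $E$ (which is bounded, with those Ext-vanishing terms in all degrees $\geq v+1$) gives $\Ext^{i}_{\A}(\bound^{v+1}\!E,M)=0$ for all $i>0$. Feeding this vanishing into the long exact sequence of $\Hom_{\A}(-,M)$ applied to the leftmost short exact sequence $0\to\cobound^{v}\!P\to\cobound^{v}\!Q\oplus P^{v+1}\to\bound^{v+1}\!E\to 0$ yields an isomorphism $\Ext^{i}_{\A}(\cobound^{v}\!Q\oplus P^{v+1},M)\xrightarrow{\ \sim\ }\Ext^{i}_{\A}(\cobound^{v}\!P,M)$ for every $i\geq 1$; as $\Ext^{i}_{\A}(P^{v+1},M)=0$, this isomorphism is precisely $\Ext^{i}_{\A}(\cobound^{v}\!f,M)$. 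Together with the reduction step, this proves the lemma.

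The long-exact-sequence manipulations are routine; the step I expect to require the most care is the descending induction, where one must ensure it also kills $\Ext^{1}_{\A}(\bound^{j}\!E,M)$ and not merely $\Ext^{\geq 2}$ — this works because the connecting map into $\Ext^{1}$ has target inside $\Ext^{1}_{\A}(E^{j-1},M)=0$, which forces the previous $\Hom$ map to be surjective. A secondary point to watch is the bookkeeping of morphisms through the long exact sequences, so that the final isomorphism is genuinely $\Ext^{i}_{\A}(\cobound^{v}\!f,M)$ rather than an abstract one; this is why one records that $\cobound^{v}\!f$ sits as a component of the first differential of $E$.
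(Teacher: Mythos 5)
Your proof is correct and self-contained, and it uses exactly the tool the paper relies on elsewhere (the acyclic bounded mapping cone of $\tau^{\geq v}f$, as in the proof of Lemma \ref{CFH 2006 3.1.1}); the paper itself omits the proof of Lemma \ref{AM 3.2}, citing \cite[Lemma~3.2]{Awadalla Marley}, so your argument fills that gap in the same spirit. The reduction from $\bound^v$ to $\cobound^v$ via the natural short exact sequence $0\to\bound^v X\to X^v\to\cobound^v X\to 0$ and the degree shift $\Ext^i(\bound^v X,M)\cong\Ext^{i+1}(\cobound^v X,M)$ is clean, and the descending induction on the boundaries of $E$ is sound. One small remark on your closing paragraph: the worry about $\Ext^1$ is unfounded, and the explanation you give (that ``the connecting map into $\Ext^1$ has target inside $\Ext^1_\A(E^{j-1},M)=0$'') does not match the short exact sequence you actually use. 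In the sequence $0\to\bound^j E\to E^j\to\bound^{j+1}E\to 0$, the term $\Ext^i_\A(\bound^j E,M)$ sits between $\Ext^i_\A(E^j,M)=0$ and $\Ext^{i+1}_\A(\bound^{j+1}E,M)=0$ for every $i\geq 1$, so the case $i=1$ needs no separate treatment and no surjectivity of any $\Hom$ map is invoked. The body of the proof already does the right thing; only that remark should be corrected or dropped.
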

The finiteness of the projective dimension of a complex can be expressed in terms of the vanishing of Ext modules (cf. \cite[A.5.3]{Christensen t}). In addition, the finiteness of the Gorenstein projective dimension of a complex is characterized by the property of being a reflexive complex (cf. \cite[Theorem 2.7]{Yassemi}). These results imply the thickness of the subcategory of complexes with finite (Gorenstein) projective dimension.
More generally, the following proposition ensures that the category of complexes with finite resolution dimension by a resolving subcategory forms a thick subcategory of the bounded derived category.
\begin{prop}\label{thickness}
Let $\A$ be an abelian category with enough projective objects, and $\X$ a subcategory of $\A$ that contains $\proj\A$ and is closed under extensions and kernels of epimorphisms in $\A$.
We denote by $\mathcal{C}$ the subcategory of $\D^{b}(\A)$ consisting of complexes of finite $\X$-resolutuion dimension.
Then the following hold.
\begin{enumerate}[\rm(1)]
\item
The subcategory $\mathcal{C}$ is a triangulated subcategory of $\D^{b}(\A)$.
\item
Furthermore, assume that $\X$ is closed under direct summands (i.e., $\X$ is resolving), then $\mathcal{C}$ is a thick subcategory of $\D^{b}(\A)$.
\end{enumerate}
\end{prop}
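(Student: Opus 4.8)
The plan is to verify directly that $\mathcal{C}$ is closed under shifts and cones (this gives (1)), and, when $\X$ is resolving, also under direct summands (this gives (2)); the device in each case is Lemma \ref{CFH 2006 3.1.2}, which converts the derived-categorical condition $\X\resoldim(-)<\infty$ into the membership of a single boundary object $\cobound^{\bullet}\!(-)$ of a complex of projectives in $\X$. Closure under shifts is immediate: if $X\in C^{-}(\X)$ satisfies $X\cong M$ in $\D^{b}(\A)$, then $\Sigma^{\pm1}X\in C^{-}(\X)$ satisfies $\Sigma^{\pm1}X\cong\Sigma^{\pm1}M$, and comparing lowest nonzero degrees via Remark \ref{rmk for resoldim}(1) shows that $\X\resoldim\Sigma^{\pm1}M$ differs from $\X\resoldim M$ by at most $1$; in particular finiteness is preserved, and $0\in\mathcal{C}$.

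The heart of (1) is closure under cones. Let $f\colon L\to M$ be a morphism in $\D^{b}(\A)$ with $L,M\in\mathcal{C}$. Using the triangle equivalence $\D^{b}(\A)\cong\K^{-,b}(\proj\A)$ of Remark \ref{rmk for resoldim}(4), I represent $f$ by an honest chain map $g\colon P\to Q$ between right-bounded complexes of projectives with bounded cohomology, so that $\cone(f)$ is represented by the mapping cone of $g$. Put $p=\X\resoldim P=\X\resoldim L$ and $q=\X\resoldim Q=\X\resoldim M$, both finite, and set $m=\max\{p,q\}$. Since $p\ge-\inf P$ and $q\ge-\inf Q$, Lemma \ref{CFH 2006 3.1.2} gives $\cobound^{-m}\!P\in\X$ and $\cobound^{-m}\!Q\in\X$, so the soft truncations $\tau^{\ge-m}P$ and $\tau^{\ge-m}Q$ are bounded complexes with all terms in $\X$ (their remaining terms are projective, and $\X$ is closed under finite direct sums because it is closed under extensions). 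As $-m\le\inf P$ and $-m\le\inf Q$, the natural maps $P\to\tau^{\ge-m}P$ and $Q\to\tau^{\ge-m}Q$ are quasi-isomorphisms (Remark \ref{rmk for truncation}), and by functoriality of soft truncation they fit into a commutative square with $g$ and $\tau^{\ge-m}g$; this square induces a quasi-isomorphism $\cone(g)\to\cone(\tau^{\ge-m}g)$ by the five lemma applied to the cohomology long exact sequences. The target lies in $C^{b}(\X)\subseteq C^{-}(\X)$ and is concentrated in degrees $\ge-m-1$, so $\X\resoldim\cone(f)\le m+1<\infty$ by Remark \ref{rmk for resoldim}(1), i.e.\ $\cone(f)\in\mathcal{C}$. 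Together with closure under shifts this makes $\mathcal{C}$ a triangulated subcategory (the two-out-of-three property follows by rotating triangles), which proves (1).

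For (2), assume in addition that $\X$ is resolving, and let $M\in\mathcal{C}$ split as $M\cong M_{1}\oplus M_{2}$ in $\D^{b}(\A)$; it suffices to show each $M_{i}$ has finite $\X$-resolution dimension. Choose $\proj\A$-resolutions $P_{i}\to M_{i}$; then $P_{1}\oplus P_{2}$ is a $\proj\A$-resolution of $M$, so if $P\to M$ is any $\proj\A$-resolution, $P$ and $P_{1}\oplus P_{2}$ are homotopy equivalent right-bounded complexes of projectives. Put $n=\X\resoldim M$. By Lemma \ref{CFH 2006 3.1.2}, $\cobound^{-n}\!P\in\X$, and then Lemma \ref{CFH 2006 3.1.1} gives $\cobound^{-n}\!P_{1}\oplus\cobound^{-n}\!P_{2}=\cobound^{-n}\!(P_{1}\oplus P_{2})\in\X$; since $\X$ is closed under direct summands, $\cobound^{-n}\!P_{i}\in\X$ for each $i$. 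As $-n\le\inf M\le\inf M_{i}$, Lemma \ref{CFH 2006 3.1.2} (applied to the $\proj\A$-resolution $P_{i}$ of $M_{i}$) yields $\X\resoldim M_{i}\le n$. Hence $\mathcal{C}$ is closed under direct summands, and being already triangulated it is thick.

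The step I expect to be the main obstacle is closure under cones in (1): the naive mapping cone of $g$ need not have terms in $\X$, and $\X$ is not assumed closed under cokernels, so one cannot directly write down an $\X$-resolution of $\cone(f)$. The key point is that Lemma \ref{CFH 2006 3.1.2} reduces finiteness of $\X\resoldim$ to the single membership $\cobound^{-m}\!(-)\in\X$, which is preserved by soft truncation; this lets one first replace $P$ and $Q$ by bounded complexes in $C^{b}(\X)$ and only then form the cone. In (2) the analogous role is played by Lemma \ref{CFH 2006 3.1.1} together with the direct-summand closure of $\X$, and beyond this bookkeeping no further difficulty should arise.
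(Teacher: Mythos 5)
Your proof is correct. For part (1) you take essentially the same route as the paper: both arguments reduce to Lemma \ref{CFH 2006 3.1.2} after replacing the triangle by a mapping cone of a genuine chain map between projective resolutions; the only variation is that you soft-truncate $P$ and $Q$ \emph{before} forming the cone and observe that the resulting bounded complex has all terms in $\X$, whereas the paper first forms $\cone(\varphi)$, hard-truncates the exact sequence $0\to Q\to\cone(\varphi)\to P[1]\to0$, and reads off the short exact sequence $0\to\cobound^{-n-1}Q\to\cobound^{-n-1}\cone(\varphi)\to\cobound^{-n}P\to0$ from the long exact cohomology sequence. Both hinge on the same ingredients (closure of $\X$ under extensions and finite direct sums), so this is cosmetic.

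For part (2), however, your argument is genuinely different and considerably shorter than the paper's. The paper handles direct summands in two stages: it first proves the claim for objects of $\A$ using the syzygy characterization (Corollary \ref{CFH 2006 3.1.2 cor}), and then reduces the general case to objects of $\A$ via an exact triangle $F_i\to M_i\to A_i[n]\rightsquigarrow$ from \cite{Dao Takahashi 2015}, where $F_i$ has finite $\proj\A$-resolution dimension and $A_i\in\A$, invoking part (1) to transfer finiteness to $A_1\oplus A_2$. You bypass all of this by taking $\proj\A$-resolutions $P_i\to M_i$, noting that $P_1\oplus P_2$ resolves $M$ and that $\cobound$ commutes with finite direct sums, so $\cobound^{-n}P_1\oplus\cobound^{-n}P_2=\cobound^{-n}(P_1\oplus P_2)\in\X$; closure of $\X$ under direct summands gives $\cobound^{-n}P_i\in\X$, and then the implication $(2)\Rightarrow(1)$ of Lemma \ref{CFH 2006 3.1.2} (whose proof needs only one projective resolution satisfying the condition, together with $-n\le\inf M_i$) yields $\X\resoldim M_i\le n$. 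This eliminates the need for the Dao--Takahashi triangle and the detour through objects of $\A$. The one spot worth tightening is your appeal to Lemma \ref{CFH 2006 3.1.1} to pass from $\cobound^{-n}P\in\X$ to $\cobound^{-n}(P_1\oplus P_2)\in\X$: this is redundant, since condition (2) of Lemma \ref{CFH 2006 3.1.2} already applies to $X=P_1\oplus P_2\in C^-(\X)$ directly.
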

\begin{proof}
(1) It is obvious that $\mathcal{C}$ is closed under shifts. We shall prove that $\mathcal{C}$ is closed under mapping cones.
Let $A\to B\to C\rightsquigarrow$ be an exact triangle in $\D^{b}(\A)$ with $A$, $B \in \mathcal{C}$.
Take $\proj\A$-resolutions $P\xrightarrow{\simeq}A$, and $Q\xrightarrow{\simeq}B$ with $P$, $Q\in\K^{-,b}(\proj\A)$. 
Then  we have the following diagram in $\D^{b}(\A)$ with exact rows.
\begin{equation*}
\xymatrix{
P \ar[r]^{\Phi}\ar[d]^{\simeq}\ar@{}[dr]|\circlearrowright & Q \ar[r]\ar[d]^{\simeq}\ar@{}[dr]|\circlearrowright & \cone\Phi \ar@{-->}[d]^{\simeq} \rightsquigarrow \\
A \ar[r] & B \ar[r] & \quad C \rightsquigarrow,
}
\end{equation*}
where $\Phi$ is the composition morphism of $P\to A\to B\to Q$ in $\D^{b}(\A)$.
As $P$ is an object in $\K^{-,b}(\proj\A)$, the natural homomorphism $\Hom_{\K(\A)}(P,Q)\to \Hom_{\D(\A)}(P,Q)$ is an isomorphism. Hence, there exists a morphism $\varphi : P\to Q$ of complexes such that $\cone(\varphi)$ is isomorphic to $\cone\Phi$ in $\D^{b}(\A)$.
Thus, it is enough to show that the $\X$-resolutuion dimension of $\cone(\varphi)$ is finite.
Let $n=\sup\{\X\resoldim A,\ \X\resoldim B \} < \infty$.
We may assume that $n$ is an integer.
Then by Lemma \ref{CFH 2006 3.1.2}, $\cobound^{-m}\!P$ and $\cobound^{-m}\!Q$ belong to $\X$ for all $m\geq n$.
Note that the inequality $-n\leq \inf\{\inf P,\ \inf Q\}$ holds.
By applying hard truncations to the exact sequence of complexes
\begin{center}
$0\to Q\to\cone(\varphi)\to P\left[1\right]\to 0$
\end{center}
at degree $-n-1$, we have the following exact sequence of complexes.
\begin{equation*}
\xymatrix{
0 \ar[r] & \sigma^{\leq -n-1}Q \ar[r] &\sigma^{\leq -n-1}\cone(\varphi) \ar[r] & \sigma^{\leq -n-1}P\left[1\right] \ar[r] & 0.
}
\end{equation*}
Namely, there exists a diagram with exact rows as follows.
\begin{equation*}
\xymatrix{
& {\rotatebox{270}{$\cdots$}} \ar[d]^{d_{Q}^{-n-3}} & {\rotatebox{270}{$\cdots$}} \ar[d]^{d_{\cone(\varphi)}^{-n-3}} & {\rotatebox{270}{$\cdots$}} \ar[d]^{-d_{P}^{-n-2}} & \\
0 \ar[r] & Q^{-n-2} \ar[d]^{d_{Q}^{-n-2}}\ar[r] & Q^{-n-2}\oplus P^{-n-1} \ar[d]^{d_{\cone(\varphi)}^{-n-2}}\ar[r] & P^{-n-1} \ar[d]^{-d_{P}^{-n-1}}\ar[r] & 0 \\
0 \ar[r] & Q^{-n-1} \ar[d]\ar[r] & Q^{-n-1}\oplus P^{-n} \ar[d]\ar[r] & P^{-n} \ar[d]\ar[r] & 0 \\
& 0 & 0 & 0 &
}
\end{equation*}
Hence, by taking the long exact sequence of cohomologies, we have the following exact sequence in $\A$.
\begin{center}
$0\to \cobound^{-n-1}Q\!\to \cobound^{-n-1}\cone(\varphi)\to \cobound^{-n}P\to 0$.
\end{center}
Since $\X$ is closed under extensions, $\cobound^{-n-1}\cone(\varphi)$ belongs to $\X$. Therefore the $\X$-resolution dimension of $\cobound^{-n-1}\cone(\varphi)$ is less than or equal to $n+1$ by Lemma \ref{CFH 2006 3.1.2}, and thus it is finite.
  
(2) By virtue of (1), it suffices to show that $\mathcal{C}$ is closed under direct summands in $\D^{b}(\A)$. Let $M,M_{1}$, and $M_{2}$ be objects in $\D^{b}(\A)$ with $M\in \mathcal{C}$ and $M\cong M_{1}\oplus M_{2}$ in $\D^{b}(\A)$. Then we may assume that $n=\X\resoldim M$ is an integer. Firstly, we consider the case where $M,M_{1}$, and $M_{2}$ are objects in $\A$. Since the natural functor $\A \to \D^{b}(\A)$ is fully faithful, we have an isomorphism $M\cong M_{1}\oplus M_{2}$ in $\A$. Hence one has $\Omega^{i}M\cong \Omega^{i}M_{1}\oplus \Omega^{i}M_{2}$ in $\A$ for all $i\geq 0$. As $n=\X\resoldim M \in \mathbb{Z}_{\geq 0}$, the object $\Omega^{n}M$ belongs to $\X$ by Corollary \ref{CFH 2006 3.1.2 cor}. Since $\X$ is closed under direct summands, $\Omega^{n}M_{1}$ and $\Omega^{n}M_{2}$ belong to $\X$. Hence the $\X$-resolution dimensions of $M_{1}$ and $M_{2}$ are less than or equal to $n$ by Corollary \ref{CFH 2006 3.1.2 cor}. Therefore $M_{1}$ and $M_{2}$ belong to $\mathcal{C}$.
Next, we consider the general case. Fix an integer $i\in \{1,2\}$. Take a $\proj\A$-resolution $P_{i}\xrightarrow{\simeq}M_{i}$ of $M_{i}$. As the inequalities $-n\leq \inf M\leq \inf M_{i}$ hold, there exists an exact triangle
\begin{equation}\label{thickness ex.tri}
F_{i}\to M_{i}\to A_{i}\left[n\right]\rightsquigarrow
\end{equation}
in $\D^{b}(\A)$ such that $F_{i}$ has finite $\proj\A$-resolution dimension and $A_{i}$ belongs to $\A$ by \cite[Lemma 2.4 (1)]{Dao Takahashi 2015}. By taking a direct sum of two exact triangles arising from the case where $i=1,2$ in (\ref{thickness ex.tri}), we have the following exact triangle in $\D^{b}(\A)$.
\begin{center}
$F_{1}\oplus F_{2}\to M_{1}\oplus M_{2}\to A_{1}\oplus A_{2}\left[n\right]\rightsquigarrow$.
\end{center}
Since $F_{1}\oplus F_{2}$ and $M_{1}\oplus M_{2} \cong M$ have finite $\X$-resolution dimension, $A_{1}\oplus A_{2}$ also has finite $\X$-resolution dimension by (1). Hence, from the first case we considered, $A_{1}$ and $A_{2}$ belong to $\mathcal{C}$. Thus, each $M_{i}$ belongs to $\mathcal{C}$ for $i=1,2$ by (\ref{thickness ex.tri}) and (1).
\end{proof}
We introduce the following condition $(\mathbf{A})$, which is a requirement on the resolving subcategory necessary to establish the proof for obtaining the lower bounds.
\begin{dfn}\label{condition A}
Let $\A$ be an abelian category, and $\X$ a subcategory of $\A$.
We say that $\X$ satisfies the condition $(\mathbf{A})$ if for all objects $X$ in $\X$, there exists an exact sequence
$0\to X \to Y \to X' \to 0$ 
such that $Y\in \X \cap \X^{\perp}$ and $X' \in \X$.
Dually, we say that $\X$ satisfies the condition $(\mathbf{A}^{*})$ if for all objects $X$ in $\X$, there exists an exact sequence
$0\to X' \to Y \to X \to 0$ 
such that $Y\in \X \cap {}^{\perp}\X$ and $X' \in \X$.
\end{dfn}
The following lemma plays an important role in proving our Theorem \ref{main thm llevel} and is motivated by \cite[Theorem 3.1 and Lemma 3.4]{Christensen Iyengar}. The condition $(\mathbf{A})$ required for a resolving subcategory is used to construct an approximation of a complex with finite resolution dimension.
\begin{lem}\label{CI 3.1}
Let $\A$ be an abelian category.
\begin{enumerate}[\rm(1)]
\item
Let $X$ be a complex of $\A$ and  $n$ an integer.
Assume that there exists a monomorphism
$\iota : X^{n} \to Y^{n}$ in $\A$.
Then we have the following exact sequence of complexes.
\begin{equation*}
\xymatrix{
0 \ar[r] & X \ar@{}[d]|{\rotatebox{270}{$\quad=(\cdots$}} \ar[r] & Y \ar@{}[d]|{\rotatebox{270}{$\quad=(\cdots$}} \ar[r] & C \ar[r]  & 0 \\
& \ar[d] & \ar[d] & & \\
0 \ar[r] & X^{n-1}\ar@{=}[r] \ar[d]^{d_{X}^{n-1}} & X^{n-1} \ar[d] \ar[r] & 0 \ar[d] & \\
0 \ar[r] & X^{n} \ar[r]^{\iota} \ar[d]^{d_{X}^{n}} & Y^{n} \ar[r] \ar[d] & \coker \iota \ar[r] \ar@{=}[d] & 0 \\
0 \ar[r] & X^{n+1} \ar[r] \ar[d]^{d_{X}^{n+1}} & Y^{n+1} \ar[r] \ar[d]^{f} & \coker \iota \ar[r] \ar[d] & 0 \\
0 \ar[r] & X^{n+2}\ar@{=}[r] \ar[d] & X^{n+2} \ar[d] \ar[r] & 0 & \\
& \rotatebox{90}{$(\cdots$}& \rotatebox{90}{$(\cdots$} && \\
}
\end{equation*}
\item
Assume that $\A$ has enough projective objects, and
let $\X$ be a subcategory of $\A$ that contains $\proj\A$ and is closed under extensions and kernels of epimorphisms in $\A$, and satisfies the condition $(\mathbf{A})$.
Then for all objects $M$ in $\D^{b}(\A)$ 
with finite $\X$-resolutuon dimension 
and integers $n$ with $-\sup M \leq n \leq \X\resoldim M$,
there exists a complex $C(n)$ in $\D^{b}(\A)$ such that 
$C(n)$ is isomorphic to $M$ in $\D^{b}(\A)$ 
and $C(n)$ is of the following form:
\begin{center}
$C(n)=\left(0 \to Q^{-g}\to \cdots \to Q^{-n-1} \to X^{-n}\to P^{-n+1} \to \cdots \to P^{s}\to 0\right)$,
\end{center}
where 
$g=\X\resoldim M$, $s = \sup M$, 
$Q^{i}\in \X \cap \X^{\perp}$ for $i = -g,\ldots, -n-1$, 
$X^{-n} \in \X$, and
$P^{i} \in \proj\A$ for $i = -n+1,\ldots, s$.
\end{enumerate}
\end{lem}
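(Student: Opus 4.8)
The plan is to prove (1) by an explicit construction and then feed that construction into a descending induction for (2).

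\textbf{Part (1).} The construction is explicit; the one creative point is to realize the degree-$(n+1)$ term of $Y$ as the pushout of $\iota\colon X^{n}\to Y^{n}$ along $d_{X}^{n}\colon X^{n}\to X^{n+1}$. Concretely, set $Y^{i}=X^{i}$ for $i\neq n,n+1$; let $d_{Y}^{n-1}=\iota\circ d_{X}^{n-1}$; let $d_{Y}^{n}\colon Y^{n}\to Y^{n+1}$ be the canonical map into the pushout; and let $d_{Y}^{n+1}\colon Y^{n+1}\to X^{n+2}$ be the map out of the pushout determined by the pair $(0,\,d_{X}^{n+1})$, which is well defined since $d_{X}^{n+1}\circ d_{X}^{n}=0$. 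Because $\iota$ is a monomorphism, the structure map $X^{n+1}\to Y^{n+1}$ is a monomorphism with cokernel $\coker\iota$, and a direct check shows that the displayed diagram commutes, has exact rows, and has complexes for columns. The termwise cokernel $C$ is the complex concentrated in degrees $n,n+1$ whose two terms are $\coker\iota$ with identity differential; this is contractible, hence acyclic, so the inclusion $X\hookrightarrow Y$ is a quasi-isomorphism.

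\textbf{Part (2).} Since the $\X$-resolution dimension of $M$ is finite, $M\neq 0$; put $g=\X\resoldim M\in\ZZ$ and $s=\sup M$, and note $g\geq -\inf M\geq -s$, so the range $-\sup M\leq n\leq g$ is nonempty. Fix a projective resolution $P\xrightarrow{\simeq}M$ with $P\in\K^{-,b}(\proj\A)$ and $P^{i}=0$ for $i>s$. By Lemma \ref{CFH 2006 3.1.2} we have $\cobound^{-g}\!P\in\X$ and $-g\leq\inf M$, so by Remark \ref{rmk for truncation} the complex $C(g):=\tau^{\geq -g}P=\left(0\to\cobound^{-g}\!P\to P^{-g+1}\to\cdots\to P^{s}\to0\right)$ is isomorphic to $M$ in $\D^{b}(\A)$ and has the required form, with $X^{-g}:=\cobound^{-g}\!P\in\X$. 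This is the base case of a descending induction on $n$.

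For the inductive step, suppose $C(n+1)=\left(0\to Q^{-g}\to\cdots\to Q^{-n-2}\to X^{-n-1}\to P^{-n}\to\cdots\to P^{s}\to0\right)$ has been constructed of the required form, with $n<g$ (so $-n-1\geq -g$) and $X^{-n-1}\in\X$. Apply the condition $(\mathbf{A})$ to $X^{-n-1}$ to get an exact sequence $0\to X^{-n-1}\xrightarrow{\iota}Y\to X'\to0$ with $Y\in\X\cap\X^{\perp}$ and $X'\in\X$, and apply part (1) to the complex $C(n+1)$ and the monomorphism $\iota$ at degree $-n-1$. The output $C(n)$ is isomorphic to $C(n+1)$, hence to $M$, in $\D^{b}(\A)$; its term in degree $-n-1$ is $Y\in\X\cap\X^{\perp}$, and its term in degree $-n$ is the pushout $W$ of $\iota$ along $d^{-n-1}\colon X^{-n-1}\to P^{-n}$, which sits in an exact sequence $0\to P^{-n}\to W\to X'\to0$; since $\proj\A\subseteq\X$ and $\X$ is closed under extensions, $W\in\X$. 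All remaining terms are unchanged, so $C(n)$ has the required form with $X^{-n}:=W$. The hypothesis $n\geq -\sup M$ is used precisely here, to guarantee that the degree-$(-n)$ term of $C(n+1)$ is the projective $P^{-n}$ rather than lying beyond the complex. Iterating from $n=g$ down to $n=-\sup M$ proves the claim.

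The part requiring the most care is the construction in part (1) — realizing the degree-$(n+1)$ term as a pushout so that the rows stay exact and the columns stay honest complexes — together with the degree bookkeeping in part (2): keeping straight which term carries the distinguished object of $\X$, an object of $\X\cap\X^{\perp}$, or a projective, and identifying exactly where the bound $n\geq -\sup M$ is needed.
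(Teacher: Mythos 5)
Your proof is correct and follows essentially the same strategy as the paper's: part (1) via the pushout of $\iota$ along $d_X^n$, and part (2) via descending induction, using Lemma \ref{CFH 2006 3.1.2} and the soft truncation $\tau^{\geq -g}P$ for the base case $n=g$, and condition $(\mathbf{A})$ together with part (1) for the inductive step, noting that the pushout $X^{-n}$ lies in $\X$ as an extension of $X'\in\X$ by $P^{-n}\in\proj\A\subseteq\X$. You are somewhat more explicit than the paper in part (1) — spelling out that the cokernel complex $C$ is contractible and that the pushout structure map $X^{n+1}\to Y^{n+1}$ is a monomorphism with cokernel $\coker\iota$ — but these are exactly the verifications the paper leaves to the reader, so the two arguments are the same in substance.
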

\begin{proof}
(1)
The object $Y^{n+1}$ is constructed by the pushout of $\iota$ and $d_{X}^{n}$ in $\A$, and the morphism $f : Y^{n+1}\to X^{n+2}$ is induced by the universal property of the pushout diagram. From the commutativity of the diagram and the uniqueness possessed by the induced morphism $f$, we obtain the complex $Y$ as stated in the claim.

(2) We shall construct the complex $C(n)$ by the descending induction on $n$.
When $n=g$, Take a $\proj\A$-resolution $P$ of $M$.
As the inequality $-g\leq\inf P$ holds, one has
\begin{center}
$M\cong P\cong\tau^{\geq -g}P =\left(0\to X^{-g}\to P^{-g+1}\to\cdots\to P^{s}\to 0\right)$,
\end{center}
where $X^{-g}=\cobound^{-g}\!P\in\X$ by Lemma \ref{CFH 2006 3.1.2}.
Thus, setting $C(g)=\left(0\to X^{-g}\to P^{-g+1}\to\cdots\to P^{s}\to 0\right)$ suffices.
Assume that $n<g$, and we construct $C(n)$ from
\begin{center}
$M\cong C(n+1)=\left(0 \to Q^{-g}\to \cdots\to Q^{-n-2}\to X^{-n-1}\to P^{-n} \to \cdots \to P^{s}\to 0\right)$,
\end{center}
where 
$Q^{i}\in \X \cap \X^{\perp}$ for $i = -g,\ldots, -n-2$, 
$X^{-n-1} \in \X$, and
$P^{i} \in \proj\A$ for $i = -n,\ldots, s$.
Since $X^{-n-1}$ belongs to $\X$, there exists an exact sequence
\begin{center}
$0\to X^{-n-1}\to Q^{-n-1}\to X'\to 0$,
\end{center}
where $Q^{-n-1}\in \X\cap\X^{\perp}$, and $X'\in \X$ by the condition $(\mathbf{A})$.
Hence by $(1)$, we have the following quasi-isomorphism of complexes.
\begin{equation*}
\xymatrix@C=15pt@R=20pt{
&&&&& 0\ar[d] & 0\ar[d] &&&& \\
C(n+1) \ar@{=}[r]\ar[d]^{\cong} &(0\ar[r] & Q^{-g}\ar[r]\ar@{=}[d] & \cdots\ar[r] & Q^{-n-2}\ar[r]\ar@{=}[d] & X^{-n-1}\ar[r]\ar[d] & P^{-n}\ar[r]\ar[d] & P^{-n+1}\ar[r]\ar@{=}[d] & \cdots\ar[r] & P^{s}\ar[r]\ar@{=}[d] & 0) \\
C(n) \ar@{=}[r] &(0\ar[r] & Q^{-g}\ar[r] & \cdots\ar[r] & Q^{-n-2}\ar[r] & Q^{-n-1}\ar[r]\ar[d] & X^{-n}\ar[r]\ar[d] & P^{-n+1}\ar[r] & \cdots\ar[r] & P^{s}\ar[r] & 0). \\
&&&&& X'\ar@{=}[r]\ar[d] & X'\ar[d] &&&& \\
&&&&& 0 & 0 &&&& 
}
\end{equation*}
As $\X$ is closed under extensions, $X^{-n}$ belongs to $\X$.
Therefore, the above complex $C(n)$ is the desired one.
\end{proof}
Next, we recall the definition of a (co)ghost map in a triangulated category.
\begin{dfn}
Let $\T$ be a triangulated category with a shift functor $\Sigma$, and $\mathcal{C}$ a subcategory of $\T$.
A morphism $f:M\to N$ in $\T$ is {\em $\mathcal{C}$-ghost} (resp. {\em $\mathcal{C}$-coghost}) if the induced morphisms $\Hom_{\T}(\Sigma^{n}C, f): \Hom_{\T}(\Sigma^{n}C, M)\to \Hom_{\T}(\Sigma^{n}C, N)$ (resp. $\Hom_{\T}(f,\Sigma^{n}C)$: $\Hom_{\T}(N, \Sigma^{n}C)\to \Hom_{\T}(M, \Sigma^{n}C))$ are zero for all objects $C$ in $\mathcal{C}$ and integers $n$. 
\end{dfn}
\begin{rmk}\label{rmk for ghost}
Let $\A$ be an abelian category. For the triangulated category $\T=\D(\A)$ and a morphism $f : M\to N$ in $\D(\A)$, $f$ is $\mathcal{C}$-ghost (resp. $\mathcal{C}$-coghost) if and only if the induced morphisms of cohomologies $\Ext_{\A}^{n}(C,f) : \Ext_{\A}^{n}(C,M)\to\Ext_{\A}^{n}(C,N)$ (resp. $\Ext_{\A}^{n}(f,C) : \Ext_{\A}^{n}(N,C)\to\Ext_{\A}^{n}(M,C)$) are zero for all $C$ in $\mathcal{C}$, and integers $n$.
\end{rmk}
Now we have reached the main result of this section, which is established by combining the lemmas proved above. Its techniques of proof are based on \cite[Theorem 3.3]{Awadalla Marley}.
\begin{thm}\label{main thm llevel}
Let $\A$ be an abelian category with enough projective objects, and $\X$ a resolving  subcategory of $\A$ satisfying the condition $(\mathbf{A})$.
Then for all nonzero objects $M$ in $\D^{b}(\A)$, the following inequality holds.
\begin{center}
$\level_{\D^{b}(\A)}^{\X}M 
\geq \X \resoldim M + \inf M +1$.
\end{center}
\end{thm}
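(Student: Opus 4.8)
The plan is to deduce the inequality from the ghost lemma \cite{ABIM}: that lemma says that any composite of $r$ $\X$-ghost morphisms starting at an object of $\langle\X\rangle_{r}^{\D^{b}(\A)}$ is zero, so it suffices to exhibit $r:=\X\resoldim M+\inf M$ composable $\X$-ghost morphisms out of $M$ whose composite is nonzero; this forces $\level_{\D^{b}(\A)}^{\X}M\geq r+1$. First I would dispose of two degenerate cases. If $\X\resoldim M=\infty$, then $\level_{\D^{b}(\A)}^{\X}M=\infty$ too, since $\X$ lies in the subcategory $\mathcal C$ of complexes of finite $\X$-resolution dimension, which is thick by Proposition \ref{thickness}(2), whence $\thick_{\D^{b}(\A)}\X\subseteq\mathcal C$, while a complex of finite level lies in $\thick_{\D^{b}(\A)}\X=\bigcup_{i}\langle\X\rangle_{i}^{\D^{b}(\A)}$ (Remark \ref{rmk for thick}(2)). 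If $g:=\X\resoldim M$ is finite, then $g\geq-\inf M$ by Lemma \ref{CFH 2006 3.1.2}, so $r\geq0$, and if $r=0$ the claim reads $\level\geq1$, true because $M\neq0$. So I may assume $r\geq1$; write $i=\inf M$, $s=\sup M$.

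By Lemma \ref{CI 3.1}(2) applied with $n=-i$ (legitimate since $-s\leq-i\leq g$), there is a complex $C=(0\to Q^{-g}\to\cdots\to Q^{i-1}\to X^{i}\to P^{i+1}\to\cdots\to P^{s}\to0)\cong M$ in $\D^{b}(\A)$ with $Q^{j}\in\X\cap\X^{\perp}$, $X^{i}\in\X$, and $P^{j}\in\proj\A$. Set $W^{n}:=\cobound^{n}C$ for $-g\leq n\leq i$; note $W^{-g}=Q^{-g}$. Since $\homol^{j}C=\homol^{j}M=0$ for $j<i$, factoring the differentials of $C$ gives short exact sequences $0\to W^{n-1}\to C^{n}\to W^{n}\to0$ for $-g+1\leq n\leq i$, and inducting along these using $Q^{n}\in\X^{\perp}$ in the relevant degrees shows $W^{n}\in\X^{\perp}$ for all $-g\leq n\leq i-1$. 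By Remark \ref{rmk for truncation}, $\sigma^{\leq n}C\to W^{n}[-n]$ is a quasi-isomorphism for $n\leq i$. Composing the quotient chain map $C\twoheadrightarrow\sigma^{\leq i-1}C$ with the successive quotients $\sigma^{\leq n}C\twoheadrightarrow\sigma^{\leq n-1}C$ (for $n=i-1,\dots,-g+1$) and these quasi-isomorphisms, I obtain a chain
$$M\xrightarrow{\psi_{1}}W^{i-1}[1-i]\xrightarrow{\psi_{2}}W^{i-2}[2-i]\longrightarrow\cdots\xrightarrow{\psi_{r}}W^{-g}[g]=Q^{-g}[g]$$
of $r$ morphisms in $\D^{b}(\A)$, where for $2\leq j\leq r$ the map $\psi_{j}$ is a shift of the connecting morphism of $0\to W^{i-j}\to Q^{i-j+1}\to W^{i-j+1}\to0$, and the composite $\psi_{r}\cdots\psi_{1}$ is, via $M\cong C$, the quotient chain map $C\twoheadrightarrow\sigma^{\leq-g}C=Q^{-g}[g]$, equal to $\id_{Q^{-g}}$ in degree $-g$ and $0$ elsewhere.

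The second step is to check each $\psi_{j}$ is $\X$-ghost, using Remark \ref{rmk for ghost}. For $j\geq2$, writing $a=i-j+1\in[1-g,i-1]$ one has $W^{a},W^{a-1}\in\X^{\perp}$, so for $Z\in\X$ the group $\Ext_{\A}^{t}(Z,W^{a}[-a])=\Ext_{\A}^{t-a}(Z,W^{a})$ is nonzero only for $t=a$, and there it maps into $\Ext_{\A}^{1}(Z,W^{a-1})=0$. For $\psi_{1}$, $\Ext_{\A}^{t}(Z,W^{i-1}[1-i])=\Ext_{\A}^{t+1-i}(Z,W^{i-1})$ is nonzero only for $t=i-1$, and there the source $\Ext_{\A}^{i-1}(Z,M)=\Hom_{\D^{b}(\A)}(Z,M[i-1])$ vanishes because $M[i-1]$ has cohomology only in degrees $\geq1$ (a dévissage over its cohomology objects, using $\Ext_{\A}^{<0}(Z,-)=0$). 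The third step is the non-vanishing of the composite: since every term of $C$ lies in $\X$ and $Q^{-g}\in\X^{\perp}$, every $\Ext_{\A}^{>0}(C^{j},Q^{-g})$ vanishes, so the naive Hom-complex computes the hyper-ext and $\Hom_{\D^{b}(\A)}(C,Q^{-g}[g])\cong\homol^{g}\Hom_{\A}(C,Q^{-g})=\coker\bigl((d_{C}^{-g})^{*}\colon\Hom_{\A}(Q^{-g+1},Q^{-g})\to\Hom_{\A}(Q^{-g},Q^{-g})\bigr)$, under which $\psi_{r}\cdots\psi_{1}$ becomes the class of $\id_{Q^{-g}}$. Were this class zero, $d_{C}^{-g}$ would be a split monomorphism, so $W^{-g+1}=\coker(d_{C}^{-g})$ would be a direct summand of $Q^{-g+1}\in\X$, hence in $\X$, and $C$ would be quasi-isomorphic to $(0\to W^{-g+1}\to Q^{-g+2}\to\cdots\to P^{s}\to0)$, giving $\X\resoldim M\leq g-1$, a contradiction. (Alternatively, Lemma \ref{AW 3.1} produces the splitting.) Thus $\psi_{r}\cdots\psi_{1}\neq0$, so the ghost lemma gives $M\notin\langle\X\rangle_{r}^{\D^{b}(\A)}$, i.e.\ $\level_{\D^{b}(\A)}^{\X}M\geq r+1=\X\resoldim M+\inf M+1$.

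The step I expect to be the main obstacle is conceptual rather than computational: recognizing that condition $(\mathbf A)$, used through the special resolution of Lemma \ref{CI 3.1}(2), is exactly what pushes the deep syzygies $W^{-g},\dots,W^{i-1}$ into $\X^{\perp}$, and that this orthogonality is precisely what makes the truncation and connecting morphisms $\X$-ghost---for a general resolving $\X$ these morphisms are isomorphisms on certain $\Ext$-groups and so fail to be ghost. A secondary technical point is the identification of $\Hom_{\D^{b}(\A)}(C,Q^{-g}[g])$ with the displayed cokernel, which is what reduces the non-vanishing of the composite to the non-splitting of the first differential of $C$, hence to $\X\resoldim M=g$.
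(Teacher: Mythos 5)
Your proof is correct and follows essentially the same strategy as the paper's: build the special complex $C(-i)$ via Lemma~\ref{CI 3.1}, observe that the truncation maps are $\X$-ghost because the cokernels $\cobound^{n}$ for $n\le i-1$ land in $\X^{\perp}$, and derive a contradiction from the non-splitting of $d^{-g}_{C}$ when the composite vanishes. The only differences are cosmetic: you verify $\psi_{1}$ is ghost via a cohomological-support argument on $M$ rather than on $\cobound^{i}$, and you compute $\Hom_{\D^{b}(\A)}(C,Q^{-g}[g])$ directly as a cokernel rather than passing through a projective resolution and Lemma~\ref{AW 3.1}, which you correctly note gives the same splitting.
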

\begin{proof}
We may assume that $u=\level_{\D^{b}(\A)}^{\X}M$ is a positive integer. Then $M$ belongs to ${\langle \X \rangle}_{u}^{\D^{b}(\A)}$. Combining Remarks \ref{rmk for resoldim}, \ref{rmk for thick}, and Proposition \ref{thickness}, each object in ${\langle \X \rangle}_{u}^{\D^{b}(\A)}$ has finite $\X$ resolution dimension. Thus $g=\X\resoldim M$ is finite.

We set $i=\inf M$ and $s=\sup M$. Then one has $-\infty<-g\leq i\leq s<\infty$. We may assume that $-g<i$. By Lemma \ref{CI 3.1}, $M$ is isomorphic to the following complex in $\D^{b}(\A)$.
\begin{center}
$C(-i)=\left(0\to X^{-g}\to\cdots\to X^{i}\to\cdots\to X^{s}\to 0 \right)=X$,
\end{center}
where $X^{j}\in\X\cap\X^{\perp}$ for $j=-g,\ldots,i-1$, $X^{i}\in\X$, and $X^{j}\in\proj\A$ for $j=i+1,\ldots,s$.
Note that $u=\level_{\D^{b}(\A)}^{\X}X$, $g=\X \resoldim X$, $i=\inf X$, and $s=\sup X$. 

For each integer $j$, we define $\varphi_{j}$ as the natural morphism of hard truncations $\sigma^{\leq j}X\to\sigma^{\leq j-1}\left(\sigma^{\leq j}X\right) =\sigma^{\leq j-1}X$.
We shall show that each $\varphi_{j}$ is $\X$-ghost in $\D(\A)$ for all integers $j\leq i$. By virtue of Remarks \ref{rmk for truncation} and \ref{rmk for ghost}, it is enough to show that the induced morphisms $\Ext_{\A}^{k}(A,\cobound^{j}\!X)\to\Ext_{\A}^{k+1}(A,\cobound^{j-1}\!X)$ are zero for all $A\in\X$ and $k\in\mathbb{Z}$. When $k<0$, the domain of the morphism vanishes since $A$ and $\cobound^{j}\!X$ are objects in $\A$. We consider the case where $k\geq0$. Since $j-1$ is less than $i$, $\cobound^{j-1}\!X \cong \left(\sigma^{\leq j-1}X\right)\left[j-1\right]$ has finite $\X^{\perp}$-resolution dimension. Hence by Lemma \ref{CFH 2.1}, $\cobound^{j-1}\!X$ belongs to $\X^{\perp}$. This implies that $\Ext_{\A}^{k+1}(A,\cobound^{j-1}\!X)$ vanishes for all $A\in\X$ and $k\geq0$.

We focus on the morphisms of hard truncations:
\begin{equation*}
X=\sigma^{\leq s}X \xrightarrow{\varphi_{s}}\cdots\xrightarrow{\varphi_{i+1}}\sigma^{\leq i}X\xrightarrow{\varphi_{i}}\sigma^{\leq i-1}X\to\cdots\xrightarrow{\varphi_{-g+1}}\sigma^{\leq -g}X=X^{-g}\left[g\right].
\end{equation*}
We set $\rho=\varphi_{i+1}\cdots\varphi_{s}$,\ $\varphi'_{i}=\varphi_{i}\rho$, and $\psi=\varphi_{-g+1}\cdots\varphi_{i-1}\varphi'_{i}$.
Since $\varphi_{i}$ is $\X$-ghost, so is $\varphi'_{i}$. Hence $\psi$ is a composition of $g+i$ $\X$-ghost maps in $\D^{b}(\A)$. By virtue of the Ghost lemma (cf. \cite[Theorem 2.11]{Awadalla Marley} or \cite[Lemma 4.11]{Rouquier}), it is enough to show that $\psi$ is nonzero.
Assume that $\psi$ is zero in $\D^{b}(\A)$. Take a $\proj\A$-resolution $\sigma : P\to X$ with $P\in\K^{-,b}(\proj\A)$ and $P^{l}=0$ for all $l>\sup X$. Then the composition $\psi\sigma$ is also zero in $\D^{b}(\A)$. Since $P$ is a right-bounded complex with projective objects in $\A$, the morphism $\psi\sigma : P\to\sigma^{\leq -g}X$ is null-homotopic. Hence there exists a morphism $\tau : P^{-g+1}\to X^{-g}$ such that $\tau d_{P}^{-g}=\sigma^{-g}$. Thus we have the following diagram of morphisms.
\begin{equation*}
\xymatrix@=20pt{
&P^{-g} \ar[dd]_{\sigma^{-g}}\ar[rr]^{d_{P}^{-g}}\ar@{->>}[rd] && P^{^g+1}\ar[dd]^{\sigma^{-g+1}}\ar@/^16pt/[lldd]^{\tau} \\
&& \bound^{-g+1}\!P\ar@{^{(}-_>}[ru]^{\overline{d_{P}^{-g}}}\ar@/_8pt/[ld]^{\eta}   & \\
0 \ar[r] & X^{-g} \ar[rr]_{d_{X}^{-g}}&& X^{-g+1}
}
\end{equation*}
Since the inequality $-g<i$ holds, $d_{X}^{-g}$ is a monomorphism and $\eta$ is induced by the universal property of $X^{-g}$. Through a diagram chase in the above diagram, we can see that $\tau\overline{d_{P}^{-g}}=\eta$. Hence we obtain the following diagram with exact rows and commutative squares.
\begin{equation*}
\xymatrix{
0 \ar[r] & \bound^{-g+1}\!P \ar[r]^{\overline{d_{P}^{-g}}}\ar[d]^{\eta} & P^{-g+1}\ar[r]\ar[d]^{\sigma^{-g+1}}\ar[ld]^{\tau}\ar@{}@<-2.0ex>[dl]|{\circlearrowright} & \cobound^{-g+1}\!P \ar[r]\ar[d] & 0 \\
0 \ar[r] & X^{-g} \ar[r]_{d_{X}^{-g}} & X^{-g+1} \ar[r] & \cobound^{-g+1}\!X \ar[r] & 0.
}
\end{equation*}
Since $\sigma : P\to X$ is a quasi-isomorphism in $C^{-}(\X)$ and $X^{-g}$ belongs to $\X^{\perp}$, the induced homomorphism $\Ext_{\A}^{1}(\cobound^{-g+1}\!X,X^{-g})\to\Ext_{\A}^{1}(\cobound^{-g+1}\!P,X^{-g})$ is an isomorphism by Lemma \ref{AM 3.2}. By virtue of Lemma \ref{AW 3.1}, the bottom row in the above diagram splits. Hence $\cobound^{-g+1}\!X$ is a direct summand of $X^{-g+1}$, and it belongs to $\X$. Thus $g=\X\resoldim X$ is less than or equal to $g-1$ by Lemma \ref{CFH 2006 3.1.2}. This is a contradiction, and the assertion holds. 
\end{proof}
We close this section by providing the dual statement of the previous theorem. 
\begin{thm}
Let $\A$ be an abelian category with enough injective objects, and $\X$ a coresolving  subcategory of $\A$ satisfying the condition $(\mathbf{A}^{*})$.
Then for all nonzero objects $M$ in $\D^{b}(\A)$, the following inequality holds.
\begin{center}
$\level_{\D^{b}(\A)}^{\X}M 
\geq \X \coresoldim M - \sup M +1$.
\end{center}
\end{thm}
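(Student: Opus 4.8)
The plan is to deduce this theorem from Theorem \ref{main thm llevel} by passing to the opposite category, rather than dualizing every intermediate lemma. Write $\A^{\mathrm{op}}$ for the opposite abelian category; since $\A$ has enough injective objects, $\A^{\mathrm{op}}$ has enough projective objects, with $\proj(\A^{\mathrm{op}})$ identified with $(\inj\A)^{\mathrm{op}}$. First I would check that a coresolving subcategory $\X$ of $\A$ satisfying $(\mathbf{A}^{*})$ corresponds to a resolving subcategory $\X^{\mathrm{op}}$ of $\A^{\mathrm{op}}$ satisfying $(\mathbf{A})$: containment of $\inj\A$ becomes containment of $\proj(\A^{\mathrm{op}})$, closure under cokernels of monomorphisms becomes closure under kernels of epimorphisms, closure under direct summands and under extensions is self-dual, and the natural isomorphism $\Ext_{\A}^{i}(\X,M)\cong\Ext_{\A^{\mathrm{op}}}^{i}(M,\X)$ gives $(\X^{\perp})^{\mathrm{op}}={}^{\perp}(\X^{\mathrm{op}})$. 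Hence the exact sequence $0\to X'\to Y\to X\to 0$ with $Y\in\X\cap{}^{\perp}\X$ and $X'\in\X$ furnished by $(\mathbf{A}^{*})$ becomes, read in $\A^{\mathrm{op}}$, an exact sequence $0\to X\to Y\to X'\to 0$ with $Y\in\X^{\mathrm{op}}\cap(\X^{\mathrm{op}})^{\perp}$ and $X'\in\X^{\mathrm{op}}$, i.e. condition $(\mathbf{A})$.

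Next I would set up the dictionary on derived categories. There is a triangle equivalence $\D^{b}(\A)^{\mathrm{op}}\simeq\D^{b}(\A^{\mathrm{op}})$ under which the shift $\Sigma$ on $\D^{b}(\A)$ goes to $\Sigma^{-1}$ on $\D^{b}(\A^{\mathrm{op}})$ and exact triangles go to exact triangles; since the operations $\langle-\rangle$, $\ast$, and $\langle-\rangle_{r}$ defining level are built only from shifts, cones, and direct summands, all self-dual, the $\X$-level of $M$ in $\D^{b}(\A)$ equals the $\X^{\mathrm{op}}$-level of $M$ in $\D^{b}(\A^{\mathrm{op}})$. Moreover a bounded complex with terms in $\X$ representing $M$, placed in degrees $[\inf M, n]$ in $\A$, becomes a complex with terms in $\X^{\mathrm{op}}$ placed in degrees $[-n,-\inf M]$ in $\A^{\mathrm{op}}$; comparing with Definition \ref{def of resoldim} and using $\inf_{\A^{\mathrm{op}}}M=-\sup_{\A}M$ and $\sup_{\A^{\mathrm{op}}}M=-\inf_{\A}M$ gives $\X^{\mathrm{op}}\resoldim M=\X\coresoldim M$, with the conventions on the zero object matching.

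Finally I would apply Theorem \ref{main thm llevel} to $\A^{\mathrm{op}}$, the resolving subcategory $\X^{\mathrm{op}}$ satisfying $(\mathbf{A})$, and the nonzero object $M\in\D^{b}(\A^{\mathrm{op}})$, obtaining $\level_{\D^{b}(\A^{\mathrm{op}})}^{\X^{\mathrm{op}}}M\geq\X^{\mathrm{op}}\resoldim M+\inf_{\A^{\mathrm{op}}}M+1$; translating each term through the dictionary turns this into $\level_{\D^{b}(\A)}^{\X}M\geq\X\coresoldim M-\sup M+1$, as claimed.

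The only real content beyond bookkeeping is the compatibility of the opposite-category passage with the triangulated structure — in particular that $\D^{b}(\A)^{\mathrm{op}}\simeq\D^{b}(\A^{\mathrm{op}})$ as triangulated categories and that this identification carries $\proj(\A^{\mathrm{op}})$-resolutions to $\inj\A$-coresolutions — so that the proof of Theorem \ref{main thm llevel} (which uses projective resolutions, the thickness of Proposition \ref{thickness}, the approximation of Lemma \ref{CI 3.1}, and the ghost lemma) transports verbatim. Alternatively one may instead dualize each of Lemmas \ref{CFH 2006 3.1.1}--\ref{CI 3.1}, Proposition \ref{thickness}, and the ghost-to-coghost step in the proof of Theorem \ref{main thm llevel}, replacing syzygies by cosyzygies, $\cobound^{n}$ by $\cycle^{n}$, hard truncations $\sigma^{\leq n}$ by $\sigma^{\geq n}$, and $\proj\A$-resolutions by $\inj\A$-coresolutions throughout; but the route above is shorter and avoids repeating the argument.
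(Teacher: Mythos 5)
Your proposal is correct and follows exactly the paper's own one-line proof, which simply applies Theorem \ref{main thm llevel} to the opposite category of $\A$; you have merely spelled out the bookkeeping (opposite-category dictionary for projectives/injectives, $(\mathbf{A})$/$(\mathbf{A}^{*})$, the sup/inf sign flip, and the self-duality of the triangulated operations defining level) that the paper leaves implicit.
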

\begin{proof}
Applying Theorem \ref{main thm llevel} to the opposite category of $\A$, we obtain the desired inequality.
\end{proof}
\section{(Gorenstein) projective/injective levels}

In this section, we apply Theorem \ref{main thm llevel} to establish lower bounds for the (Gorenstein) projective and injective levels of complexes. We start by recalling the Gorenstein projective and injective dimension for a complex in an abelian category.
\begin{dfn}\label{def of Gproj}
Let $\A$ be an abelian category.
\begin{enumerate}[\rm(1)]
\item
Let $M$ be an object in $\A$.
A {\em complete resolution} of $M$ in $\A$ is an exact sequence
\begin{center}
$P=\left(\cdots\xrightarrow{d_{P}^{-2}}P^{-1}\xrightarrow{d_{P}^{-1}}P^{0}\xrightarrow{d_{P}^{0}} P^{1}\xrightarrow{d_{P}^{1}}\cdots \right)$
\end{center}
of projective objects in $\A$ such that the complex $\Hom_{\A}(P,Q)$ is exact for all $Q$ in $\proj\A$, and $M$ is isomorphic to $\cobound^{0}\!P$. Dually, a {\em complete coresolution} of $M$ in $\A$ is an exact sequence
\begin{center}
$I=\left(\cdots\xrightarrow{d_{I}^{-2}}I^{-1}\xrightarrow{d_{I}^{-1}}I^{0}\xrightarrow{d_{I}^{0}} I^{1}\xrightarrow{d_{I}^{1}}\cdots \right)$
\end{center}
of injective objects in $\A$ such that the complex $\Hom_{\A}(J,I)$ is exact for all $J$ in $\inj\A$, and $M$ is isomorphic to $\cycle^{0}\!I$.
\item
An object $M$ in $\A$ is {\em Gorenstein projective} (resp. {\em Gorenstein injective}) if $M$ has a complete resolution (resp. coresolution) in $\A$. 
We denote by $\Gproj\A$ (resp. $\Ginj\A$) the subcategory of $\A$ consisting of Gorenstein projective (resp. injective) objects in $\A$. When $\A=\Mod R$ for a ring $R$, we simply denote it by $\GProj R$ (resp. $\GInj R$). When $\A=\mod R$ for a right noetherian ring $R$, we simply denote it by $\Gproj R$ (resp. $\Ginj R$). Note that if $R$ is commutative noetherian ring, the category $\Gproj R$ coincides with the category of {\em totally reflexive} $R$-modules, denoted by $\tref(R)$; see \cite[Theorem 4.1.4]{Christensen t}.
\item
For an object $M$ in $\D^{-}(\A)$ (resp. $\D^{+}(\A)$), we denote by $\Gpd_{\A}M$ (resp. $\Gid_{\A}M$) the $\Gproj\A$-resolution (resp. $\Ginj\A$-coresolution) dimension of $M$ (see Definition \ref{def of resoldim} for the definition of (co)resolution dimension). When $\A=\Mod R$ for a ring $R$, we simply denote it by $\Gpd_{R}M$ (resp. $\Gid_{R}M$). When $\A=\mod R$ for a right noetherian ring $R$, we simply denote it by $\Gdim_{R}M$.
\end{enumerate}
\end{dfn}
\begin{rmk}\label{rmk for Gproj}
For an abelian category with enough projective objects, the category $\Gproj\A$ is a resolving subcategory of $\A$ by \cite[Remark 3.4 (3)]{Matsui Takahashi}. Dually, applying this result to the opposite category of an abelian category $\B$ with enough injective objects, the category $\Ginj\B$ is a coresolving subcategory of $\B$.
\end{rmk}
Now we have reached the main result of this section.
\begin{cor}\label{Gproj inj}
Let $\A$ be an abelian category.
\begin{enumerate}[\rm(1)]
\item 
Assume that $\A$ has enough projective objects. Then for all nonzero objects $M$ in $\D^{b}(\A)$, the following inequalities hold.
\begin{center}
$\level_{\D^{b}(\A)}^{\proj\A}M\geq \pd_{\A}M+\inf M +1$,
\vskip.5\baselineskip
$\level_{\D^{b}(\A)}^{\Gproj\A}M\geq \Gpd_{\A}M+\inf M +1$.
\end{center}
\item 
Assume that $\A$ has enough injective objects. Then for all nonzero objects $M$ in $\D^{b}(\A)$, the following inequalities hold.
\begin{center}
$\level_{\D^{b}(\A)}^{\inj\A}M\geq \id_{\A}M-\sup M +1$,
\vskip.5\baselineskip
$\level_{\D^{b}(\A)}^{\Ginj\A}M\geq \Gid_{\A}M-\sup M +1$.
\end{center}
\end{enumerate}
\end{cor}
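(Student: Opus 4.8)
The plan is to derive Corollary \ref{Gproj inj} as a direct application of Theorem \ref{main thm llevel} (and its dual stated immediately afterward) to the four subcategories $\proj\A$, $\Gproj\A$, $\inj\B$, and $\Ginj\B$. For part (1), I would first treat $\X=\proj\A$: this is trivially a resolving subcategory of $\A$, and it satisfies condition $(\mathbf{A})$ in the most degenerate way — for any $X\in\proj\A$, the split exact sequence $0\to X\to X\oplus 0\to 0\to 0$ has middle term $X\in\proj\A$, and $\proj\A\subseteq\proj\A^{\perp}$ since $\Ext_\A^i(\proj\A,-)=0$ for $i>0$, so $X\oplus 0\in\proj\A\cap\proj\A^{\perp}$; alternatively take $0\to X\xrightarrow{\id}X\to 0\to 0$. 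Then Theorem \ref{main thm llevel} gives $\level^{\proj\A}_{\D^b(\A)}M\ge (\proj\A)\resoldim M+\inf M+1$, and by Remark \ref{rmk for resoldim}(2) the $\proj\A$-resolution dimension is exactly $\pd_\A M$, yielding the first inequality.

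Next, for $\X=\Gproj\A$: by Remark \ref{rmk for Gproj} this is a resolving subcategory of $\A$, so the only thing to verify is condition $(\mathbf{A})$. The key observation is that every Gorenstein projective object $X$ fits into a short exact sequence $0\to X\to P\to X'\to 0$ with $P$ projective and $X'$ again Gorenstein projective — indeed, from a complete resolution $P^\bullet$ of $X$ one has $X\cong\cobound^0\!P^\bullet=\image d_{P}^{-1}$... wait, more precisely $X$ embeds into $P^0$ with cokernel $\cobound^1\!P^\bullet$, which is the $(-1)$-st cosyzygy in the complete resolution and hence itself Gorenstein projective. Since $\proj\A\subseteq\Gproj\A$ and $\proj\A\subseteq(\Gproj\A)^{\perp}$ (projective objects have no higher self-extensions, and more generally $\Ext^i_\A(G,P)=0$ for $i>0$ when $G$ is Gorenstein projective and $P$ is projective — this is part of the definition/standard theory of Gorenstein projectives), we get $P\in\Gproj\A\cap(\Gproj\A)^{\perp}$, so condition $(\mathbf{A})$ holds with $Y=P$ and $X'=\cobound^1\!P^\bullet$. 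Then Theorem \ref{main thm llevel} applies and gives $\level^{\Gproj\A}_{\D^b(\A)}M\ge(\Gproj\A)\resoldim M+\inf M+1=\Gpd_\A M+\inf M+1$ by the definition of $\Gpd$ in Definition \ref{def of Gproj}(3).

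Part (2) is obtained by dualizing: apply part (1) to the opposite category $\A^{\mathrm{op}}$ (which has enough projective objects precisely when $\A$ has enough injective objects), under which $\proj(\A^{\mathrm{op}})=\inj\A$, $\Gproj(\A^{\mathrm{op}})=\Ginj\A$, $\pd_{\A^{\mathrm{op}}}=\id_\A$, $\Gpd_{\A^{\mathrm{op}}}=\Gid_\A$, $\inf_{\D^b(\A^{\mathrm{op}})}M=-\sup_{\D^b(\A)}M$, and levels are preserved; equivalently, invoke the dual theorem stated at the end of Section 3 together with the facts that $\inj\B$ and $\Ginj\B$ are coresolving (Remark \ref{rmk for Gproj}) and satisfy condition $(\mathbf{A}^{*})$ by the dual argument (using complete coresolutions and $\Ext^i_\A(\inj\A,G)=0$ for $G$ Gorenstein injective). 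I expect no genuine obstacle here; the only point requiring a little care is the verification that projective objects lie in $(\Gproj\A)^{\perp}$ (and dually injectives in ${}^{\perp}(\Ginj\A)$) so that the middle term of the defining sequence genuinely lands in $\X\cap\X^{\perp}$, but this is a standard fact about (co)complete resolutions that can be cited, e.g. from \cite{Christensen t} or \cite{Matsui Takahashi}.
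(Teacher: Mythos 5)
Your proof is correct and follows the same route as the paper: verify that $\proj\A$ and $\Gproj\A$ are resolving and satisfy condition $(\mathbf{A})$ (trivially for $\proj\A$; via a complete resolution and the fact that $\proj\A\subseteq\Gproj\A\cap(\Gproj\A)^{\perp}$ for $\Gproj\A$), then invoke Theorem \ref{main thm llevel}, with part (2) obtained by duality. The only slip is cosmetic: since $X\cong\cobound^{0}\!P^\bullet\cong\image d_P^{0}$, the embedding is into $P^{1}$ (not $P^{0}$), with cokernel $\cobound^{1}\!P^\bullet$; this does not affect the argument.
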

\begin{proof}
(1) It is easy to see that $\proj\A$ is a resolving subcategory of $\A$ satisfying the condition $(\mathbf{A})$. Combining Remark \ref{rmk for Gproj} with and the definition of Gorenstein projective object in $\A$, we can easily see that $\Gproj\A$ is a resolving subcategory of $\A$ satisfying the condition $(\mathbf{A})$. Hence by Theorem \ref{main thm llevel}, the assertions hold.
(2) is the dual of (1), so we omit the proof.
\end{proof}
By applying the above results to the category of modules over a ring, we immediately recover \cite[Theorem 2.1]{AGMSV} and \cite[Theorem 3.3]{Awadalla Marley}.
\begin{cor}[Altmann--Grifo--Monta\~{n}o--Sanders--Vu, and Awadalla--Marley]
Let $R$ be a ring.
\begin{enumerate}[\rm(1)]
\item 
For a nonzero complex $M$ of left $R$-modules, one has
\begin{center}
$\level_{\D^{b}(\Mod R)}^{\Proj R}M \geq \pd_{R}M + \inf M +1$.
\end{center}
\item
Assume that $R$ is a commutative ring.
Then for all nonzero objects $M$ in $\D^{b}(\Mod R)$, one has
\begin{center}
$\level_{\D^{b}(\Mod R)}^{\GProj R}M \geq \Gpd_{R}M + \inf M +1$.
\end{center}
Moreover, if $R$ is noetherian and $M$ is in $\D^{b}(\mod R)$, then one has
\begin{center}
$\level_{\D^{b}(\mod R)}^{\tref(R)}M \geq \Gdim_{R}M + \inf M +1$.
\end{center}
\end{enumerate}
\end{cor}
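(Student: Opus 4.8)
The plan is to derive all three inequalities directly from Corollary~\ref{Gproj inj} by specializing the abelian category $\A$ to an appropriate module category, so that the abstract invariants $\proj\A$, $\Gproj\A$, $\pd_{\A}$, and $\Gpd_{\A}$ reduce to their classical ring-theoretic counterparts.

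For part (1), I would take $\A$ to be the category $\Mod R$ of left $R$-modules. This category has enough projective objects (free modules), and by the conventions of Section~1 one has $\proj\A = \Proj R$ together with $\pd_{\A}M = \pd_{R}M$ for every object $M$ of $\D^{b}(\Mod R)$. The first inequality of Corollary~\ref{Gproj inj}(1) then reads $\level_{\D^{b}(\Mod R)}^{\Proj R}M \geq \pd_{R}M + \inf M + 1$, which is the assertion.

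For part (2), take $\A = \Mod R$ with $R$ commutative. Again $\A$ has enough projectives; the definition of a Gorenstein projective object in $\A$ recovers $\Gproj\A = \GProj R$, and $\Gpd_{\A}M = \Gpd_{R}M$, so the second inequality of Corollary~\ref{Gproj inj}(1) gives the first displayed inequality. For the last assertion I would instead take $\A = \mod R$ with $R$ commutative noetherian; this category has enough projective objects since every finitely generated module is a quotient of a finitely generated free one, and $\Gpd_{\A}M = \Gdim_{R}M$ by definition. By \cite[Theorem 4.1.4]{Christensen t}, the subcategory $\Gproj R$ coincides with the category $\tref(R)$ of totally reflexive modules, so the second inequality of Corollary~\ref{Gproj inj}(1) applied with $\A = \mod R$ becomes $\level_{\D^{b}(\mod R)}^{\tref(R)}M \geq \Gdim_{R}M + \inf M + 1$.

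Since the proof is merely an instantiation of Corollary~\ref{Gproj inj}, there is no real obstacle to overcome; the only points demanding attention are verifying that the chosen module categories have enough projective objects and carefully matching notation, in particular the identification $\Gproj R = \tref(R)$ in the finitely generated setting over a commutative noetherian ring.
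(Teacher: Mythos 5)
Your proposal is correct and matches the paper's own proof exactly: both derive the three inequalities by specializing Corollary~\ref{Gproj inj} to $\A = \Mod R$ (for the first two) and to $\A = \mod R$ with $R$ commutative noetherian (for the third), using the identification $\Gproj R = \tref(R)$ from \cite[Theorem 4.1.4]{Christensen t}. The extra care you take in checking that each category has enough projectives and that the abstract invariants reduce to the classical ones is implicit in the paper but not a different argument.
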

\begin{proof}
In Corollary \ref{Gproj inj}, by taking $\Mod R$ as $\A$, the first two assertions follow. If $R$ is noetherian, by taking $\mod R$ as $\A$, the last assertion holds.
\end{proof}
\section{(Gorenstein) $C$-projective/injective levels}
In this section, we apply Theorem \ref{main thm llevel} to the category of modules over a commutative noetherian ring. Let us first recall the notion of a semidualizing module.
\begin{dfn}
Let $R$ be a commutative noetherian ring.
A finitely generated $R$-module $C$ is {\em semidualizing} if the following conditions hold.
\begin{enumerate}[\rm(1)]
\item
The natural homomorphism
$R \to \End_{R}(C)$ is an isomorphism.
\item
One has $\Ext_{R}^{i}(C,C)=0$ for all $i>0$.
\end{enumerate}
Note that $C$ is a semidualizing $R$-module if and only if the natural morphism $R\to\RHom_{R}(C,C)$ is an isomorphism in $\D(\mod R)$.
\end{dfn}
\begin{rmk}
The ring $R$ is a semidualizing $R$-module. If $R$ is a Cohen--Macaulay local ring with a canonical module $\omega$, then $\omega$ is a semidualizing $R$-module.
\end{rmk}
Now, we recall the notion of Gorenstein projective/injective modules with respect to a fixed semidualizing module.
\begin{dfn}\label{def of Gcproj}
Let $R$ be a commutative noetherian ring, and $C$ a semidualizing $R$-module.
\begin{enumerate}[\rm(1)]
\item
Let $M$ be a module in $\Mod R$ (resp. $\mod R$).
A {\em complete $C$-resolution} of $M$ in $\Mod R$ (resp. $\mod R$) is an exact sequence
\begin{center}
$X=\left(\cdots\to P^{-1}\to P^{0}\to C\otimes_{R}Q^{1}\to C\otimes_{R}Q^{2}\to\cdots \right)$
\end{center}
such that $P^{i}$, $Q^{i}\in \Proj R$ (resp. $\proj R$) for all $i$, and the complex $\Hom_{R}(X,C\otimes_{R}Q)$ is exact for all $Q$ in $\Proj R$ (resp. $\proj R$), and $M$ is isomorphic to $\cobound^{0}\!X$. Dually, a {\em complete $C$-coresolution} of $M$ in $\Mod R$ is an exact sequence
\begin{center}
$Y=\left(\cdots\to \Hom_{R}(C,J^{-2})\to \Hom_{R}(C,J^{-1}) \to I^{0}\to I^{1}\to \cdots \right)$
\end{center}
such that $I^{i}$, $J^{i}\in \Inj R$ for all $i$, and the complex $\Hom_{R}(\Hom_{R}(C,J),Y)$ is exact for all $J$ in $\Inj R$, and $M$ is isomorphic to $\cycle^{0}\!Y$.
\item
A module $M$ in $\Mod R$ is {\em Gorenstein $C$-projective} (resp. {\em Gorenstein $C$-injective}) if $M$ has a complete $C$-resolution (resp. $C$-coresolution) in $\Mod R$.  
We denote by $\G_{C}\Proj R$ (resp. $\G_{C}\Inj R$) the subcategory of $\Mod R$ consisting of Gorenstein $C$-projective (resp. $C$-injective) modules in $\Mod R$. In addition, we denote by $\tref_{C}(R)$ the subcategory of $\mod R$ consisting of Gorenstein $C$-projective modules in $\mod R$.
\item
For an object $M$ in $\D^{-}(\Mod R)$ (resp. $\D^{+}(\Mod R)$), we denote by $\G_{C}\pd_{R}M$ (resp. $\G_{C}\id_{R}M$) the $\G_{C}\Proj R$-resolution (resp. $\G_{C}\Inj R$-coresolution) dimension of $M$. When $M$ is a module in $\mod R$, we denote by $\G_{C}\dim_{R}M$ the $\tref_{C}(R)$-resolution dimension of $M$.
\end{enumerate}
\end{dfn}
\begin{rmk}\label{rmk for Gcproj}
\begin{enumerate}[\rm(1)]
\item 
The subcategory of $\mod R$ consisting of {\em totally $C$-reflexive} $R$-modules coincides with $\tref_{C}(R)$; see \cite[Theorem 4.1.4]{Christensen t} for instance. Note that when $R$ is a Cohen--Macaulay local ring with a canonical module $\omega$, the category $\tref_{\omega}(R)$ coincides with the category $\CM(R)$ of maximal Cohen--Macaulay $R$-modules.
\item
The subcategory $\G_{C}\Proj R$ (resp. $\tref_{C}(R)$) is a resolving subcategory of $\Mod R$ (resp. $\mod R$) by \cite[Theorem 2.8]{White}. Moreover, $\G_{C}\Proj R$ (resp. $\tref_{C}(R)$) satisfies the condition $(\mathbf{A})$ in Definition \ref{condition A} for the abelian category $\Mod R$ (resp. $\mod R$) by \cite[Proposition 2.9]{White}. 
Dually, the subcategory $\G_{C}\Inj R$ is a coresolving subcategory of $\Mod R$ satisfying the condition $(\mathbf{A}^{*})$.
\end{enumerate}
\end{rmk}
Combining Theorem \ref{main thm llevel} with the above Remark, we obtain the following result, which is the main result of the first half of this section.
\begin{cor}\label{Gcproj inj}
Let $R$ be a commutative noetherian ring, and $C$ a semidualizing  $R$-module.
\begin{enumerate}[\rm(1)]
\item 
For all nonzero objects $M$ in $\D^{b}(\Mod R)$, the following inequalities hold.
\begin{center}
$\level_{\D^{b}(\Mod R)}^{\G_{C}\Proj R}M\geq \G_{C}\pd_{R}M+\inf M +1$,
\vskip.5\baselineskip
$\level_{\D^{b}(\Mod R)}^{\G_{C}\Inj R}M\geq \G_{C}\id_{R}M-\sup M +1$.
\end{center}
\item
For all nonzero objects $M$ in $\D^{b}(\mod R)$, the following inequality holds.
\begin{center}
$\level_{\D^{b}(\mod R)}^{\tref_{C}(R)}M\geq \G_{C}\dim_{R}M+\inf M +1$.
\end{center}
\item
Assume that $R$ is a Cohen--Macaulay local ring with a canonical module $\omega$. Then for all nonzero objects $M$ in $\D^{b}(\mod R)$, the following inequality holds.
\begin{center}
$\level_{\D^{b}(\mod R)}^{\CM(R)}M\geq \G_{\omega}\dim_{R}M+\inf M +1$,
\end{center}
where $\CM(R)$ is the subcategory of $\mod R$ consisting of maximal Cohen--Macaulay $R$-modules.
\end{enumerate}
\end{cor}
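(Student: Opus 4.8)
The plan is to deduce all three parts directly from Theorem \ref{main thm llevel} and its dual, once the hypotheses are checked, using the structural facts assembled in Remark \ref{rmk for Gcproj}.

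First I would treat part (1). The category $\Mod R$ is a module category, so it has enough projective objects (finite free modules, or more generally free modules) and enough injective objects. By Remark \ref{rmk for Gcproj}(2), the subcategory $\G_{C}\Proj R$ is a resolving subcategory of $\Mod R$ satisfying the condition $(\mathbf{A})$; hence Theorem \ref{main thm llevel} applies verbatim with $\A=\Mod R$ and $\X=\G_{C}\Proj R$, and since $\G_{C}\pd_{R}M$ is by Definition \ref{def of Gcproj}(3) the $\G_{C}\Proj R$-resolution dimension of $M$, this yields the first displayed inequality. Dually, $\G_{C}\Inj R$ is a coresolving subcategory of $\Mod R$ satisfying the condition $(\mathbf{A}^{*})$, so the dual of Theorem \ref{main thm llevel} (established immediately after it) applies with $\A=\Mod R$ and $\X=\G_{C}\Inj R$, giving $\level_{\D^{b}(\Mod R)}^{\G_{C}\Inj R}M\geq \G_{C}\id_{R}M-\sup M+1$.

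Next, for part (2), I would note that $\mod R$ has enough projective objects because $R$ is noetherian: every finitely generated $R$-module is a quotient of a finite free module. By Remark \ref{rmk for Gcproj}(2), $\tref_{C}(R)$ is a resolving subcategory of $\mod R$ satisfying the condition $(\mathbf{A})$, so Theorem \ref{main thm llevel} applies with $\A=\mod R$ and $\X=\tref_{C}(R)$; as $\G_{C}\dim_{R}M$ is by definition the $\tref_{C}(R)$-resolution dimension of $M$, the claimed inequality follows. Part (3) is then the special case $C=\omega$ of part (2): when $R$ is a Cohen--Macaulay local ring with canonical module $\omega$, Remark \ref{rmk for Gcproj}(1) identifies $\tref_{\omega}(R)$ with $\CM(R)$, so substituting $C=\omega$ into part (2) gives the stated bound for $\level_{\D^{b}(\mod R)}^{\CM(R)}M$.

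I expect no real obstacle here, since all of the heavy lifting is carried by Theorem \ref{main thm llevel} and the cited results of White recorded in Remark \ref{rmk for Gcproj}; the argument is essentially a dictionary translation. The only points demanding a line of justification are that $\Mod R$ has enough projectives and injectives and that $\mod R$ has enough projectives (all standard), and the observation that in part (2) one must work inside $\mod R$ rather than $\Mod R$ — but this causes no difficulty because $\G_{C}\dim_{R}$ was defined precisely via the subcategory $\tref_{C}(R)$ of $\mod R$, so there is nothing further to verify.
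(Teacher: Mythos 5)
Your proposal is correct and follows essentially the same route the paper takes: the paper derives the corollary by combining Theorem \ref{main thm llevel} (and its dual) with the facts recorded in Remark \ref{rmk for Gcproj}, which is exactly the dictionary translation you carry out, including the identification $\tref_{\omega}(R)=\CM(R)$ for part~(3).
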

Next, we establish a lower bound for the $C$-projective levels of a complex through Foxby equivalence. We first review the definitions of Auslander and Bass classes with respect to $C$.
\begin{dfn}
Let $R$ be a commutative noetherian ring, and $C$ a fixed semidualizing $R$-module.
\begin{enumerate}[\rm(1)]
\item
For an object $X$ in $\D^{b}(\Mod R)$, we denote by $\gamma_{X}$ the composition of the canonical morphisms $X\xrightarrow{\cong}\RHom_{R}(C,C)\Lotimes_{R}X\to\RHom(C,C\Lotimes_{R}X)$ in $\D(\Mod R)$.
Dually, for an object $Y$ in $\D^{b}(\Mod R)$, we denote by $\xi_{Y}$ the composition of the canonical morphisms $C\Lotimes_{R}\RHom_{R}(C,Y)\to \RHom_{R}(\RHom_{R}(C,C),Y) \xrightarrow{\cong}Y$ in $\D(\Mod R)$.
\item
We define the {\em Auslander class with respect to} $C$, denoted by $\A_{C}(R)$, the subcategory of $\D^{b}(\Mod R)$ consisting of complexes $M$ satisfying that $C\Lotimes_{R}M$ belongs to $\D^{b}(\Mod R)$, and $\gamma_{M}$ is an isomorphism.
Dually, we define the {\em Bass class with respect to} $C$, denoted by $\B_{C}(R)$, the subcategory of $\D^{b}(\Mod R)$ consisting of complexes $M$ satisfying that $\RHom_{R}(C,M)$ belongs to $\D^{b}(\Mod R)$, and $\xi_{M}$ is an isomorphism.
\end{enumerate}
\end{dfn}
\begin{rmk}
The Auslander class $\A_{C}(R)$ and Bass class $\B_{C}(R)$ are thick subcategories of $\D^{b}(\Mod R)$. Similarly, $\A_{C}(R)\cap\D^{b}(\mod R)$ and $\B_{C}(R)\cap\D^{b}(\mod R)$ are thick subcategories of $\D^{b}(\mod R)$.
\end{rmk}
Note that every semidualizing module is a semidualizing complex with amplitude zero (terminology is based on \cite{Christensen}). Hence, the following holds.
\begin{lem}\cite[(4.6), (4.8), (4.11)]{Christensen}\label{Chris 4}
Let $R$ be a commutative noetherian ring, and $C$ a semidualizing $R$-module.
\begin{enumerate}[\rm(1)]
\item
There exists a triangle equivalence of categories between $\A_{C}(R)$ and $\B_{C}(R)$ as follows.
\begin{equation*}
\xymatrix@R=18pt@C=100pt{
\A_{C}(R)\ar@{}[d]|{\bigcup} \ar@<0.5ex>[r]^{C\Lotimes_{R}-}  & \B_{C}(R) \ar@{}[d]|{\bigcup}\ar@<0.5ex>[l]^{\RHom_{R}(C,-)} \\
\A_{C}(R)\cap\D^{b}(\mod R) \ar@<0.5ex>[r]^{C\Lotimes_{R}-} & \B_{C}(R)\cap\D^{b}(\mod R) \ar@<0.5ex>[l]^{\RHom_{R}(C,-)}
}
\end{equation*}
In particular, for all objects $X$ in $\D^{b}(\Mod R)$, the following equivalences hold.
\begin{gather*}
X\in \A_{C}(R) \iff C\Lotimes_{R}X\in\B_{C}(R), \\
X\in \B_{C}(R) \iff \RHom_{R}(C,X)\in \A_{C}(R). 
\end{gather*}
Similarly, for all objects $X$ in $\D^{b}(\mod R)$, the following equivalences hold.
\begin{gather*}
X\in \A_{C}(R)\cap\D^{b}(\mod R) \iff C\Lotimes_{R}X\in\B_{C}(R)\cap\D^{b}(\mod R), \\
X\in \B_{C}(R)\cap\D^{b}(\mod R) \iff \RHom_{R}(C,X)\in \A_{C}(R)\cap\D^{b}(\mod R). 
\end{gather*}
\item
Let $X$ be an object in $\D^{b}(\Mod R)$. Then the following equalities hold.
\begin{itemize}
\item
$\sup C\Lotimes_{R}X=\sup X$.
\item
$\inf \RHom_{R}(C,X)=\inf X$.
\end{itemize}
\item
Let $X$ be an object in $\A_{C}(R)$, and $Y$ an object in $\B_{C}(R)$. Then the following equalities hold.
\begin{itemize}
\item
$\inf C\Lotimes_{R}X=\inf X$.
\item
$\sup \RHom_{R}(C,Y)=\sup Y$.
\end{itemize}
\end{enumerate}
\end{lem}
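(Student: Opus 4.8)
The plan is to obtain all three parts as special cases of the Foxby equivalence for semidualizing complexes due to Christensen. The point is the reformulation recorded just above the statement: since $R\to\RHom_{R}(C,C)$ is an isomorphism in $\D(\mod R)$, the module $C$, viewed as a complex concentrated in degree zero, is a semidualizing complex of amplitude zero. Under this identification the classes $\A_{C}(R)$ and $\B_{C}(R)$ defined here coincide with the Auslander and Bass classes of that complex, and $\gamma_{X}$, $\xi_{Y}$ are the unit and counit of the derived adjunction $(C\Lotimes_{R}-,\RHom_{R}(C,-))$ on $\D(\Mod R)$. Then part (1) is \cite[(4.6)]{Christensen}, part (2) is \cite[(4.8)]{Christensen}, and part (3) is \cite[(4.11)]{Christensen}, each restricted to the amplitude-zero situation; the finitely generated versions follow because both functors preserve $\D^{b}(\mod R)$ on the relevant complexes, $C$ being finitely generated.

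Should one want a self-contained argument, I would first treat part (2), which is unconditional. The core is the case of a single nonzero module $M$: there $\homol^{i}(C\Lotimes_{R}M)=0$ for $i>0$ and $\homol^{i}(\RHom_{R}(C,M))=0$ for $i<0$, while $C\otimes_{R}M\neq0$ and $\Hom_{R}(C,M)\neq0$ because $\operatorname{Supp}_{R}C=\operatorname{Spec}R$ (each $C_{\mathfrak{p}}$ is semidualizing over $R_{\mathfrak{p}}$, hence nonzero); in particular $\operatorname{Ass}_{R}\Hom_{R}(C,M)=\operatorname{Supp}_{R}C\cap\operatorname{Ass}_{R}M=\operatorname{Ass}_{R}M\neq\emptyset$. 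The general case reduces to this by soft truncation: applying $\RHom_{R}(C,-)$ to the triangle $\homol^{\inf X}(X)[-\inf X]\to X\to\tau^{\geq\inf X+1}X\rightsquigarrow$ and reading the long exact cohomology sequence pins down $\inf\RHom_{R}(C,X)=\inf X$, and dually $\sup(C\Lotimes_{R}X)=\sup X$. Part (3) is then immediate from part (2): if $X\in\A_{C}(R)$ then $\gamma_{X}\colon X\xrightarrow{\cong}\RHom_{R}(C,C\Lotimes_{R}X)$, so $\inf X=\inf\RHom_{R}(C,C\Lotimes_{R}X)=\inf(C\Lotimes_{R}X)$ by part (2) applied to $C\Lotimes_{R}X\in\D^{b}(\Mod R)$; dually $\sup Y=\sup(C\Lotimes_{R}\RHom_{R}(C,Y))=\sup\RHom_{R}(C,Y)$ for $Y\in\B_{C}(R)$.

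For part (1), the functors $C\Lotimes_{R}-$ and $\RHom_{R}(C,-)$ are exact functors of triangulated categories forming an adjoint pair with unit $\gamma$ and counit $\xi$; for $X\in\A_{C}(R)$ one has $C\Lotimes_{R}X\in\D^{b}(\Mod R)$, and the triangle identity $\xi_{C\Lotimes_{R}X}\circ(C\Lotimes_{R}\gamma_{X})=\id_{C\Lotimes_{R}X}$ together with $\gamma_{X}$ being invertible forces $\xi_{C\Lotimes_{R}X}$ to be invertible, so $C\Lotimes_{R}X\in\B_{C}(R)$; the dual argument carries $\B_{C}(R)$ into $\A_{C}(R)$, and $\gamma$, $\xi$ being natural isomorphisms on these subcategories makes the two functors mutually quasi-inverse. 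The ``only if'' halves of the stated equivalences, e.g.\ $C\Lotimes_{R}X\in\B_{C}(R)\Rightarrow X\in\A_{C}(R)$, use that $C\Lotimes_{R}-$ reflects isomorphisms on bounded complexes: if $C\Lotimes_{R}\gamma_{X}$ is invertible then $C\Lotimes_{R}\cone(\gamma_{X})=0$, so $\cone(\gamma_{X})$ has vanishing cohomology by part (2) and $\gamma_{X}$ is invertible. I expect the main obstacle in such a self-contained treatment to be precisely part (1): checking that the derived tensor--hom adjunction's unit and counit are literally $\gamma$ and $\xi$ and satisfy the triangle identities in $\D(\Mod R)$, and controlling boundedness so that the functors genuinely restrict and are quasi-inverse --- exactly what Christensen's machinery packages, so in practice one simply cites it.
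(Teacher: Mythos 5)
Your primary approach is exactly the paper's: observe that a semidualizing module is a semidualizing complex of amplitude zero, so all three parts are the special cases of Christensen's (4.6), (4.8), and (4.11), which is all the paper does (it gives no proof beyond the citation and the amplitude-zero remark). Your additional self-contained sketch goes beyond what the paper records; note only that the nonvanishing $C\otimes_{R}M\neq0$ for an arbitrary nonzero (not necessarily finitely generated) module $M$ deserves a bit more care than the associated-prime one-liner you give, though this does not affect the citation-based argument you lead with.
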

Next, we recall the notion of additive closures in the category of modules over a ring.
\begin{dfn}
Let $R$ be a ring, and $\X$ a subcategory of $\Mod R$. We denote by $\Add\X$ the subcategory of $\Mod R$ consisting of direct summands of direct sums of modules in $\X$.
\end{dfn}
\begin{rmk}\label{rmk for Add}
Let $R$ be a commutative noetherian ring, and $\X$ a subcategory of $\Mod R$ (resp. $\mod R$). Let $C$ be a semidualizing $R$-module.
\begin{enumerate}[\rm(1)]
\item
The subcategory $\Add\X$ (resp. $\add\X$) is the smallest subcategory of $\Mod R$ (resp. $\mod R$) that contains $\X$ and is closed under direct summands and direct sums (resp. finite direct sums) in $\Mod R$ (resp. $\mod R$).
\item
The subcategory $\Add R = \Add\{R\}$ (resp. $\add R = \add\{R\}$) coincides with $\Proj R$ (resp. $\proj R$).
\item
The Auslander class $\A_{C}(R)$ (resp. $\A_{C}(R)\cap\D^{b}(\mod R)$) contains the class of $R$-complexes with finite $\Add R$ (resp. $\add R$)-resolution dimension. On the other hand, the Bass class $\B_{C}(R)$ (resp. $\B_{C}(R)\cap\D^{b}(\mod R)$) contains the class of $R$-complexes with finite $\Add C$ (resp. $\add C$)-resolution dimension.
\item
The modules in $\Add C = \Add\{C\}$ (resp. $\add C = \add\{C\}$) are of the form $C\otimes_{R}P$, where $P\in\Proj R$ (resp. $P\in\proj R$). 
Indeed, let $P\in \Proj R$. Then $P$ is a direct summand of free $R$-module $R^{\oplus I}$ for some set $I$. Hence $C\otimes_{R}P$ is a direct summand of $C\otimes_{R}R^{\oplus I}\cong C^{\oplus I}$. Conversely, let $X\in \Add C$. Then $X$ is a direct summand of $C^{\oplus I}$ for some set $I$. Since $X$ belongs to $\B_{C}(R)$, one has $X\cong C\otimes_{R}\Hom_{R}(C,X)$. On the other hand, $\Hom_{R}(C,X)$ is a direct summand of $\Hom_{R}(C,C^{\oplus I})\cong R^{\oplus I}$. Thus, $P=\Hom_{R}(C,X)$ is a projective $R$-module. Hence, we have $X\cong C\otimes_{R}P$.
The same holds in the case of $\add C$.
\end{enumerate}
\end{rmk}
To establish Proposition \ref{level foxby}, we provide the following lemma.
\begin{lem}\label{equiv of C}
Let $R$ be a commutative noetherian ring, and $C$ a semidualizing $R$-module. Then the functor $C\otimes_{R}- : \Mod R\to \Mod R$ (resp. $C\otimes_{R}- : \mod R \to \mod R$) induces an equivalence of categories $C^{*}(\Add R) \to C^{*}(\Add C)$ (resp. $C^{*}(\add R) \to C^{*}(\add C)$) for all $*=+,-,b,\emptyset$. Moreover, let $I$ be a subset of $\mathbb{Z}$. Then for each object $X\in C(\Add C)$ (resp. $C(\add C)$) with $X^{i}=0$ for all $i\in I$, there exists an object $P\in C(\Add R)$ (resp. $C(\add R)$) such that $P^{i}=0$ for all $i\in I$ and $C\otimes_{R}P\cong X$ as $R$-complexes.
\end{lem}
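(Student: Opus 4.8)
The plan is to deduce the statement from an equivalence of \emph{additive} categories $\Add R\to\Add C$ (resp.\ $\add R\to\add C$) induced by $C\otimes_{R}-$, and then to pass to complexes degreewise.

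First I would establish the additive equivalence $C\otimes_{R}-\colon\Add R\to\Add C$. By Remark~\ref{rmk for Add}(4) the functor $C\otimes_{R}-$ maps $\Add R$ into $\Add C$, and every object of $\Add C$ has the form $C\otimes_{R}P$ with $P\in\Add R$, so $C\otimes_{R}-$ is essentially surjective onto $\Add C$. For full faithfulness, fix $P,Q\in\Add R$; tensor--hom adjunction identifies the map $\Hom_{R}(P,Q)\to\Hom_{R}(C\otimes_{R}P,\,C\otimes_{R}Q)$, $f\mapsto\id_{C}\otimes f$, with post-composition by the natural homomorphism $\eta_{Q}\colon Q\to\Hom_{R}(C,\,C\otimes_{R}Q)$, $q\mapsto(c\mapsto c\otimes q)$. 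Via the semidualizing identity $\Hom_{R}(C,C)\cong R$, the map $\eta_{Q}$ is identified with the evaluation homomorphism $\Hom_{R}(C,C)\otimes_{R}Q\to\Hom_{R}(C,\,C\otimes_{R}Q)$, which is an isomorphism for $Q=R$; since $R$ is noetherian and $C$ is finitely generated, $C$ is finitely presented, so both sides of this evaluation map commute with arbitrary direct sums and with direct summands, whence $\eta_{Q}$ is an isomorphism for every $Q\in\Add R$. Therefore $C\otimes_{R}-$ is fully faithful, and a quasi-inverse is $\Hom_{R}(C,-)$: for $X\in\Add C$ one has $X\in\B_{C}(R)$, so $\xi_{X}$ identifies $C\otimes_{R}\Hom_{R}(C,X)\cong X$, and $\Hom_{R}(C,X)$, being a direct summand of some $\Hom_{R}(C,C^{\oplus I})\cong R^{\oplus I}$, lies in $\Add R$. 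The case of $\mod R$, $\add R$, $\add C$ is the same, using only finite direct sums.

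Next I would pass to complexes. Any equivalence of additive categories $F\colon\mathcal{C}\to\mathcal{D}$ with quasi-inverse $G$ induces an equivalence $C^{*}(\mathcal{C})\to C^{*}(\mathcal{D})$ for each $*=+,-,b,\emptyset$: one applies $F$ and $G$ in each degree and to the differentials (this remains a complex, as $F(d)^{2}=F(d^{2})=0$ by additivity), the unit and counit being natural transformations, hence degreewise isomorphisms of complexes; the boundedness conditions are preserved because additive functors carry zero objects to zero objects. Applying this to $F=C\otimes_{R}-\colon\Add R\to\Add C$---which, applied degreewise, coincides with the usual functor $C\otimes_{R}-$ on complexes, $C$ being a fixed module---yields the first assertion, and the $\add$ case is identical.

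For the final claim, given $X\in C(\Add C)$ with $X^{i}=0$ for all $i\in I$, I would let $P\in C(\Add R)$ be $\Hom_{R}(C,X)$ formed degreewise, that is, $P^{i}=\Hom_{R}(C,X^{i})$ with $d_{P}^{i}=\Hom_{R}(C,d_{X}^{i})$; by the previous step this object lies in $C(\Add R)$, it satisfies $P^{i}=\Hom_{R}(C,0)=0$ for $i\in I$, and the counit $\xi$ applied degreewise gives an isomorphism $C\otimes_{R}P\xrightarrow{\ \cong\ }X$ in $C(\Add C)$. The $\add$ version is the same. The only point that is not purely formal is the isomorphism $Q\cong\Hom_{R}(C,\,C\otimes_{R}Q)$ for arbitrary $Q\in\Add R$, and this is exactly where the finite presentation of $C$---hence the noetherian hypothesis---enters, as it is needed to commute $\Hom_{R}(C,-)$ past the possibly infinite direct sums built into objects of $\Add R$ and $\Add C$; everything else reduces to tensor--hom adjunction, the identity $\Hom_{R}(C,C)\cong R$, and the formal extension of an additive equivalence to categories of complexes.
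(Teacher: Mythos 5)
Your proposal is correct and follows essentially the same route as the paper: reduce to the additive equivalence $C\otimes_{R}-\colon\Add R\to\Add C$, prove essential surjectivity via Remark \ref{rmk for Add}(4), prove full faithfulness by tensor--hom adjunction together with the identification $Q\cong\Hom_{R}(C,C\otimes_{R}Q)$, extend degreewise to complexes, and obtain the final claim from the quasi-inverse $\Hom_{R}(C,-)$. The only difference is one of detail: where the paper asserts the vertical isomorphism $\Hom_{R}(P,\Hom_{R}(C,C\otimes_{R}Q))\cong\Hom_{R}(P,Q)$ in a diagram without further comment, you explicitly justify $\eta_{Q}\colon Q\to\Hom_{R}(C,C\otimes_{R}Q)$ being an isomorphism by identifying it with an evaluation map that is an isomorphism for $Q=R$ and commutes with arbitrary direct sums and summands because $C$ is finitely generated; this is a welcome amplification, not a divergence.
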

\begin{proof}
To prove the equivalence of categories, it suffices to show that the induced functor $C\otimes_{R}- : \Add R \to \Add C$ is an equivalence of categories. By virtue of Remark \ref{rmk for Add} (4), the functor $C\otimes_{R}-$ is essentially surjective. Let $P$, $Q$ be modules in $\Proj R$. Then we have the following commutative diagram:
\begin{equation*}
\xymatrix@R=20pt@C=70pt{
\Hom_{R}(P,Q) \ar[r]^{C\otimes_{R}-}
& \Hom_{R}(C\otimes_{R}P,C\otimes_{R}Q) 
\\
& \Hom_{R}(P, \Hom_{R}(C, C\otimes_{R}Q)) \ar[u]^{\cong}
\\
& \Hom_{R}(P,Q). \ar[u]^{\cong} \ar@{=}[luu]
}
\end{equation*}
Hence, the functor $C\otimes_{R}-$ is fully faithful, and we obtain the desired equivalence of categories.
The latter assertion follows from the equivalence $C\otimes_{R}P=0 \iff P=0$, which holds for all $P\in \Proj R$.
The same holds in the case of $\add C$.
\end{proof}
The following proposition implies that the level and resolution dimension of a complex are preserved under Foxby equivalence.
\begin{prop}\label{level foxby}
Let $R$ be a commutative noetherian ring, and $C$ a semidualizing $R$-module.
\begin{enumerate}[\rm(1)]
\item
Let $\X$ be a subcategory of $\A_{C}(R)$, and $\Y$ a subcategory of $\B_{C}(R)$.
Let $M$ be an object in $\D^{b}(\Mod R)$.
\begin{enumerate}[\rm(a)]
\item
For all nonnegative integers $n$, the equivalence
\begin{center}
$M\in{\langle\X\rangle}^{\D^{b}(\Mod R)}_{n} \iff C\Lotimes_{R}M\in {\left\langle C\Lotimes_{R}\X\right\rangle}^{\D^{b}(\Mod R)}_{n}$
\end{center}
holds. In particular, one has 
\begin{center}
$\level_{\D^{b}(\Mod R)}^{\X}M = \level_{\D^{b}(\Mod R)}^{C\Lotimes_{R}\X}\C\Lotimes_{R}M$.
\end{center}
\item
For all nonnegative integers $n$, the equivalence
\begin{center}
$M\in{\langle\Y\rangle}^{\D^{b}(\Mod R)}_{n} \iff \RHom_{R}(C,M)\in {\left\langle\RHom_{R}(C,\Y)\right\rangle}^{\D^{b}(\Mod R)}_{n}$
\end{center}
holds. In particular, one has 
\begin{center}
$\level_{\D^{b}(\Mod R)}^{\Y}M = \level_{\D^{b}(\Mod R)}^{\RHom_{R}(C,\Y)}\RHom_{R}(C,M)$.
\end{center}
\end{enumerate}
\item
Let $\X$ be a subcategory of $\A_{C}(R)\cap\D^{b}(\mod R)$, and $\Y$ a subcategory of $\B_{C}(R)\cap\D^{b}(\mod R)$.
Let $M$ be an object in $\D^{b}(\mod R)$.
\begin{enumerate}[\rm(a)]
\item
For all nonnegative integers $n$, the equivalence
\begin{center}
$M\in{\langle\X\rangle}^{\D^{b}(\mod R)}_{n} \iff C\Lotimes_{R}M\in {\left\langle C\Lotimes_{R}\X\right\rangle}^{\D^{b}(\mod R)}_{n}$
\end{center}
holds. In particular, one has 
\begin{center}
$\level_{\D^{b}(\mod R)}^{\X}M = \level_{\D^{b}(\mod R)}^{C\Lotimes_{R}\X}\C\Lotimes_{R}M$.
\end{center}
\item
For all nonnegative integers $n$, the equivalence
\begin{center}
$M\in{\langle\Y\rangle}^{\D^{b}(\mod R)}_{n} \iff \RHom_{R}(C,M)\in {\left\langle\RHom_{R}(C,\Y)\right\rangle}^{\D^{b}(\mod R)}_{n}$
\end{center}
holds. In particular, one has 
\begin{center}
$\level_{\D^{b}(\mod R)}^{\Y}M = \level_{\D^{b}(\mod R)}^{\RHom_{R}(C,\Y)}\RHom_{R}(C,M)$.
\end{center}
\end{enumerate}
\item
Let $M$ be an object in $\D^{b}(\Mod R)$. Then the following hold.
\begin{enumerate}[\rm(a)]
\item
One has $\pd_{R}M=\Add C\resoldim C\Lotimes_{R}M$.
\item
One has $\Add C\resoldim M=\pd_{R}\RHom_{R}(C,M)$.
\end{enumerate}
\item
Let $M$ be an object in $\D^{b}(\mod R)$. Then the following hold.
\begin{enumerate}[\rm(a)]
\item
One has $\pd_{R}M=\add C\resoldim C\Lotimes_{R}M$.
\item
One has $\add C\resoldim M=\pd_{R}\RHom_{R}(C,M)$.
\end{enumerate}
\end{enumerate}
\end{prop}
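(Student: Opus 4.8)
The plan is to handle the four items in two groups: parts (1) and (2), which transport both membership in ${\langle-\rangle}_n$ and the level along the Foxby equivalence, are essentially formal, while parts (3) and (4), which identify $\pd_R$ with $\Add C$- (resp. $\add C$-) resolution dimension, require moving explicit bounded resolutions through the functor $C\otimes_R-$. Throughout I would use that $\Add R=\Proj R$ and $\add R=\proj R$ (Remark \ref{rmk for Add} (2)), that $\A_C(R)$, $\B_C(R)$ and their intersections with $\D^{b}(\mod R)$ are thick subcategories containing, respectively, the complexes of finite $\Add R$-resolution dimension and the complexes of finite $\Add C$-resolution dimension (Remark \ref{rmk for Add} (3)), and that $C\Lotimes_R-$ and $\RHom_R(C,-)$ restrict to mutually quasi-inverse triangle equivalences between these classes (Lemma \ref{Chris 4} (1)).

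For (1a), I would first note that in either direction of the claimed equivalence $M$ is forced to lie in $\A_C(R)$: if $M\in{\langle\X\rangle}_n^{\D^{b}(\Mod R)}$ then $M\in\thick_{\D^{b}(\Mod R)}\X\subseteq\A_C(R)$ by Remark \ref{rmk for thick} (2) (as $\A_C(R)$ is thick and contains $\X$), and if $C\Lotimes_RM\in{\langle C\Lotimes_R\X\rangle}_n^{\D^{b}(\Mod R)}$ then $C\Lotimes_RM\in\thick_{\D^{b}(\Mod R)}(C\Lotimes_R\X)\subseteq\B_C(R)$, whence $M\in\A_C(R)$ by Lemma \ref{Chris 4} (1). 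By Remark \ref{rmk for thick} (3) one may then compute the level of $M$ over $\X$ inside the thick subcategory $\A_C(R)$ and that of $C\Lotimes_RM$ over $C\Lotimes_R\X$ inside $\B_C(R)$; since ${\langle-\rangle}_n$ is built using only shifts, exact triangles and direct summands, the triangle equivalence $C\Lotimes_R-\colon\A_C(R)\to\B_C(R)$ carries ${\langle\X\rangle}_n^{\A_C(R)}$ onto ${\langle C\Lotimes_R\X\rangle}_n^{\B_C(R)}$, using its quasi-inverse $\RHom_R(C,-)$ and the natural isomorphism $\RHom_R(C,C\Lotimes_RX)\cong X$ for $X\in\X$ to match up the generators. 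This yields the displayed equivalence and, after taking the infimum over $n$, the equality of levels. Part (1b) is the same with the roles of $C\Lotimes_R-$ and $\RHom_R(C,-)$ exchanged, and (2a), (2b) repeat the argument inside the thick subcategories $\A_C(R)\cap\D^{b}(\mod R)$ and $\B_C(R)\cap\D^{b}(\mod R)$, again via Lemma \ref{Chris 4} (1).

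For (3a), if $\pd_RM=g<\infty$ then $M\in\A_C(R)$, and a projective resolution of $M$ may be chosen in $C^{b}(\Proj R)$ with terms only in degrees $[-g,\sup M]$; applying $C\otimes_R-$ yields a complex in $C^{b}(\Add C)$ with terms in the same range representing $C\Lotimes_RM$ (its top degree being $\sup M=\sup(C\Lotimes_RM)$ by Lemma \ref{Chris 4} (2)), so $\Add C\resoldim(C\Lotimes_RM)\le g$. Conversely, if $\Add C\resoldim(C\Lotimes_RM)=m<\infty$ then $C\Lotimes_RM\in\B_C(R)$, hence $M\in\A_C(R)$; choosing a representative $X\in C^{b}(\Add C)$ of $C\Lotimes_RM$ with $X^{i}=0$ for $i<-m$, Lemma \ref{equiv of C} produces $P\in C^{b}(\Proj R)$ with $P^{i}=0$ for $i<-m$ and $C\otimes_RP\cong X$, so $C\Lotimes_RP\cong C\Lotimes_RM$ with $\pd_RP\le m$; since $P,M\in\A_C(R)$, applying $\RHom_R(C,-)$ gives $P\cong M$ by Lemma \ref{Chris 4} (1), hence $\pd_RM\le m$. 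Combining these, and disposing of $M=0$ and the infinite cases by contraposition, yields (3a). Part (3b) is symmetric: a representative of $M$ in $C^{b}(\Add C)$ with terms in $[-m,\sup M]$ is $C\otimes_RP$ for some $P\in C^{b}(\Proj R)$ by Lemma \ref{equiv of C}, and then $\RHom_R(C,M)\cong\RHom_R(C,C\Lotimes_RP)\cong P$ gives $\pd_R\RHom_R(C,M)\le m$; conversely, if $\pd_R\RHom_R(C,M)=g<\infty$ then $\RHom_R(C,M)\in\A_C(R)$, so $M\in\B_C(R)$ by Lemma \ref{Chris 4} (1), and applying $C\otimes_R-$ to a complex of projectives representing $\RHom_R(C,M)$ with terms in degrees $[-g,\sup M]$ (possible since $\sup\RHom_R(C,M)=\sup M$ by Lemma \ref{Chris 4} (3)) gives a complex in $C^{b}(\Add C)$ with terms in the same degrees representing $M\cong C\Lotimes_R\RHom_R(C,M)$, so $\Add C\resoldim M\le g$. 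Finally, (4a) and (4b) are the arguments for (3a) and (3b) carried out in $\mod R$, with $\proj R$ and $\add C$ replacing $\Proj R$ and $\Add C$ and the finitely generated versions of Lemmas \ref{Chris 4} and \ref{equiv of C}.

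I do not anticipate a genuine obstacle; the difficulty is bookkeeping. The two points to be careful about are: tracking the exact degree ranges of the explicit bounded complexes so that the resulting resolution dimensions coincide on the nose rather than merely being simultaneously finite; and ensuring that $C\otimes_R-$ (resp. $\RHom_R(C,-)$) applied to a bounded complex of projectives (resp. of modules in $\Add C$) genuinely computes $C\Lotimes_R-$ (resp. $\RHom_R(C,-)$) and returns an honest isomorphism in $\D^{b}(\Mod R)$ — which is exactly why the membership equivalences of Lemma \ref{Chris 4} (1) and the containments of Remark \ref{rmk for Add} (3) are invoked at every step.
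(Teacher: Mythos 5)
Your proposal is correct and follows essentially the same approach as the paper's: for (1) and (2) you transport ${\langle-\rangle}_n$ and hence levels through the thick-subcategory containments of Remark \ref{rmk for thick} and the Foxby triangle equivalence of Lemma \ref{Chris 4}(1) by induction on $n$, and for (3) and (4) you prove both inequalities by pushing a bounded projective resolution through $C\otimes_R-$ and, conversely, lifting a bounded $\Add C$-representative to a bounded complex of projectives via Lemma \ref{equiv of C}, just as in the paper.
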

\begin{proof}
The proofs of (1) and (3) are similar to those of (2) and (4). Therefore, we only provide the proofs for (1) and (3).

(1) Since $\A_{C}(R)$ and $\B_{C}(R)$ are thick subcategories of $\D^{b}(\Mod R)$, the following inclusion relations hold for all $n\geq 0$.
\begin{center}
${\langle\X\rangle}^{\D^{b}(\Mod R)}_{n}\subseteq \A_{C}(R)$,
\qquad 
${\left\langle C\Lotimes_{R}\X\right\rangle}^{\D^{b}(\Mod R)}_{n}\subseteq \B_{C}(R)$,
\vskip.5\baselineskip
${\langle\Y\rangle}^{\D^{b}(\Mod R)}_{n}\subseteq \B_{C}(R)$, 
\qquad
${\left\langle\RHom_{R}(C,\Y)\right\rangle}^{\D^{b}(\Mod R)}_{n}\subseteq \A_{C}(R)$.
\end{center}
From the above observations, together with the triangle equivalence in Lemma \ref{Chris 4} (1) and induction on $n$, the assertion follows.

(3) (a) We shall prove that $\pd_{R}M\geq\Add C\resoldim C\Lotimes_{R}M$. We may assume that $n=\pd_{R}M$ is an integer. Then we can take a $\Proj R$-resolution $P$ of $M$ of the following form:
\begin{center}
$P=\left(0\to P^{-n}\to\cdots\to P^{\sup M}\to0\right)$.
\end{center}
Hence, we have
\begin{align*}
C\Lotimes_{R}M
& \cong C\otimes_{R}P \\
& = \left(0\to C\otimes_{R}P^{-n}\to\cdots\to C\otimes_{R}P^{\sup M}\to0\right)=X.
\end{align*}
Since the equality $\sup M=\sup C\Lotimes_{R}M$ holds by Lemma \ref{Chris 4}, $X$ gives an $\Add C$-resolution of $C\Lotimes_{R}M$. Thus, the inequality $\Add C\resoldim C\Lotimes_{R}M\leq n$ holds. Next, we shall prove that $\pd_{R}M\leq\Add C\resoldim C\Lotimes_{R}M$. We may assume that $n=\Add C\resoldim C\Lotimes_{R}M$ is an integer. Then we can take an $\Add C$-resolution $X$ of $C\Lotimes_{R}M$ of the following form:
\begin{center}
$X=\left(0\to C^{-n}\to\cdots\to C^{\sup C\Lotimes_{R}M}\to0\right)$.
\end{center}
Note that $\sup C\Lotimes_{R}M=\sup M$ holds by Lemma \ref{Chris 4}. By virtue of Lemma \ref{equiv of C}, there exists a bounded complex $P$ such that $P^{i}\in \Add R$ for all $i$, and $C\otimes_{R}P$ is isomorphic to $X$ as $R$-complexes, and $P$ is of the following form:
\begin{center}
$P=\left(0\to P^{-n}\to\cdots\to P^{\sup M}\to0\right)$.
\end{center}
As $C\Lotimes_{R}M$ belongs to $\B_{C}$, $M$ is in $\A_{C}(R)$. Combining this with the fact that $P$ belongs to $\A_{C}(R)$, we have the following isomorphisms in $\D^{b}(\Mod R)$. 
\begin{align*}
M \cong \RHom_{R}(C,C\Lotimes_{R}M)
& \cong \RHom_{R}(C,X) \\
& \cong \RHom_{R}(C,C\otimes_{R}P) \\
& \cong \RHom_{R}(C,C\Lotimes_{R}P) \\
& \cong P.
\end{align*}
Thus, the inequality $\pd_{R}M\leq n$ holds.

(b) We shall prove that $\Add C\resoldim M\leq\pd_{R}\RHom_{R}(C,M)$. We may assume that $n=\pd_{R}\RHom_{R}(C,M)$ is an integer. Then we can take a $\Proj R$-resolution $P$ of $\RHom_{R}(C,M)$ of the following form:
\begin{center}
$P=\left(0\to P^{-n}\to\cdots\to P^{\sup \RHom_{R}(C,M)}\to0\right)$.
\end{center}
Since $\RHom_{R}(C,M)$ has finite $\Proj R$-resolution dimension, $M$ belongs to $\B_{C}(R)$. Thus one has $\sup \RHom_{R}(C,M)=\sup M$ by Lemma \ref{Chris 4} (3) . Now we have the following isomorphisms in $\D^{b}(\Mod R)$:
\begin{align*}
M
& \cong C\Lotimes_{R}\RHom_{R}(C,M) \\
& \cong C\otimes_{R}P \\
&  = \left(0\to C\otimes_{R}P^{-n}\to\cdots\to C\otimes_{R}P^{\sup M} \right).
\end{align*}
Hence, the inequality $\Add C\resoldim M\leq n$ holds. Next, we shall prove that $\Add C\resoldim M\geq\pd_{R}\RHom_{R}(C,M)$ We may assume that $n=\Add C\resoldim M$ is an integer. Then we can take an $\Add C$-resolution $X$ of $M$ of the following form:
\begin{center}
$X=\left(0\to C^{-n}\to\cdots\to C^{\sup M}\to0\right)$.
\end{center}
In a similar approach to the case of (a), there exists a bounded complex $P$ of projective $R$-modules such that $C\otimes_{R}P$ is isomorphic to $X$, and $P$ is of the following form:
\begin{center}
$P=\left(0\to P^{-n}\to\cdots\to P^{\sup M}\to0\right)$.
\end{center} 
Hence, we have the following isomorphisms in $\D^{b}(\Mod R)$.
\begin{align*}
\RHom_{R}(C,M)
& \cong \RHom_{R}(C,X) \\
& \cong \RHom_{R}(C,C\otimes_{R}P) \\
& \cong \RHom_{R}(C,C\Lotimes_{R}P) \\
& \cong P.
\end{align*}
Thus, the inequality $n\geq \pd_{R}\RHom_{R}(C,M)$ holds.
\end{proof}
The above result yields the following lower bound for the $C$-projective levels of a complex.
\begin{cor}\label{C level}
Let $R$ be a commutative noetherian ring, and $C$ a semidualizing $R$-module.
\begin{enumerate}[\rm(1)]
\item
Let $M$ be a nonzero object  in $\D^{b}(\Mod R)$.
Then the following inequality holds.
\begin{center}
$\level_{\D^{b}(\Mod R)}^{\Add C}M\geq \Add C\resoldim M +\inf M +1$.
\end{center}
\item
Let $M$ be a nonzero object  in $\D^{b}(\mod R)$.
Then the following inequality holds.
\begin{center}
$\level_{\D^{b}(\mod R)}^{\add C}M\geq \add C\resoldim M +\inf M +1$.
\end{center}
\item
Let $R$ be a $d$-dimensional Cohen--Macaulay local ring with a canonical module $\omega$, and $M$ a finitely generated nonzero $R$-module with finite injective dimension.
Then the following inequality holds.
\begin{center}
$\level_{\D^{b}(\mod R)}^{\add\omega}M\geq d-\depth M+1$.
\end{center}
That is, one has $M\notin{\left\langle\omega\right\rangle}^{\D^{b}(\mod R)}_{d-\depth M}$.
\end{enumerate}
\end{cor}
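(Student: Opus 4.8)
The plan is to deduce all three inequalities from Theorem \ref{main thm llevel} by transporting the problem along Foxby's equivalence, with Proposition \ref{level foxby} as the dictionary and Corollary \ref{Gproj inj}(1) (the bound of Altmann--Grifo--Monta\~{n}o--Sanders--Vu for $\Proj R$) as the input.

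For (1), I would first note that $\Add C\subseteq\B_{C}(R)$ by Remark \ref{rmk for Add}(3), and that applying $\RHom_{R}(C,-)$ to the objects of $\Add C$ recovers exactly $\Proj R$: by Lemma \ref{equiv of C} and Remark \ref{rmk for Add}(2),(4) the functor $C\otimes_{R}-$ restricts to an equivalence $\Proj R=\Add R\xrightarrow{\ \sim\ }\Add C$ whose quasi-inverse is $\RHom_{R}(C,-)=\Hom_{R}(C,-)$, so $\RHom_{R}(C,\Add C)=\Proj R$ as subcategories of $\D^{b}(\Mod R)$. Hence Proposition \ref{level foxby}(1)(b) gives $\level_{\D^{b}(\Mod R)}^{\Add C}M=\level_{\D^{b}(\Mod R)}^{\Proj R}\RHom_{R}(C,M)$. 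Since $M\neq 0$ is bounded, Lemma \ref{Chris 4}(2) gives $\inf\RHom_{R}(C,M)=\inf M\in\ZZ$, so $\RHom_{R}(C,M)\neq 0$; applying Corollary \ref{Gproj inj}(1) with $\A=\Mod R$ to it and then rewriting $\pd_{R}\RHom_{R}(C,M)=\Add C\resoldim M$ via Proposition \ref{level foxby}(3)(b) yields the inequality. Part (2) is proved verbatim after replacing $\Mod R,\Add C,\Proj R$ by $\mod R,\add C,\proj R$, using parts (2)(b) and (4)(b) of Proposition \ref{level foxby} and Corollary \ref{Gproj inj}(1) for $\A=\mod R$; here $\RHom_{R}(C,M)$ stays in $\D^{b}(\mod R)$ since $R$ is noetherian and $C$ is finitely generated.

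For (3), $\omega$ is a semidualizing $R$-module, so part (2) applies with $C=\omega$; since $M$ is a module ($\inf M=0$), it suffices to prove $\add\omega\resoldim M\ge d-\depth_{R}M$. This is clear when $\add\omega\resoldim M=\infty$, so assume it is finite and put $N:=\RHom_{R}(\omega,M)$, so that $\pd_{R}N=\add\omega\resoldim M<\infty$ by Proposition \ref{level foxby}(4)(b). Having finite $\add\omega$-resolution dimension, $M$ lies in $\B_{\omega}(R)$ by Remark \ref{rmk for Add}(3); thus $M\cong\omega\Lotimes_{R}N$, and Lemma \ref{Chris 4}(2),(3) give $\inf N=\sup N=0$, so $N$ is a module, $M\cong\omega\otimes_{R}N$, and $\operatorname{Tor}^{R}_{i}(\omega,N)=0$ for all $i>0$. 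As $\depth_{R}\omega=d$, Auslander's depth formula (valid because $\pd_{R}N<\infty$) gives $\depth_{R}M=\depth_{R}\omega+\depth_{R}N-d=\depth_{R}N$, and Auslander--Buchsbaum gives $\pd_{R}N=d-\depth_{R}N=d-\depth_{R}M$; hence $\add\omega\resoldim M=d-\depth_{R}M$. Finally, ${\langle\omega\rangle}_{n}^{\D^{b}(\mod R)}={\langle\add\omega\rangle}_{n}^{\D^{b}(\mod R)}$ for every $n$ (the additive closure ${\langle\omega\rangle}$ already contains $\add\omega$) and $d-\depth_{R}M\ge 0$, so the inequality $\level_{\D^{b}(\mod R)}^{\add\omega}M\ge d-\depth_{R}M+1$ is precisely $M\notin{\langle\omega\rangle}_{d-\depth_{R}M}^{\D^{b}(\mod R)}$. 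The hypothesis $\id_{R}M<\infty$ is used only to place us in the finite case, via the classical Foxby--Sharp correspondence between finitely generated modules of finite injective and of finite projective dimension.

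I expect the only non-formal points to be this Foxby--Sharp input and the two depth computations in (3); everything else is bookkeeping with the adjunction $C\otimes_{R}-\dashv\RHom_{R}(C,-)$ and the amplitude and (co)resolution-dimension identities already set up in Sections 2--5. The step deserving the most care is obtaining $\depth_{R}M=\depth_{R}N$, which I plan to get from Auslander's depth formula applied to the Tor-independent pair $(N,\omega)$ together with $\pd_{R}N<\infty$ and $\depth_{R}\omega=d$.
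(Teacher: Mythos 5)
Your proofs of (1) and (2) are exactly the paper's: transport the level and the resolution dimension along Foxby equivalence via Proposition \ref{level foxby}(1)(b), (3)(b) (resp.\ (2)(b), (4)(b)), apply the projective-level bound from Corollary \ref{Gproj inj}(1), and use Lemma \ref{Chris 4}(2) to equate $\inf\RHom_R(C,M)=\inf M$. The identification $\RHom_R(C,\Add C)=\Proj R$ that you take care to justify is implicit in the paper's one-line chain; your spelling it out is harmless.

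Part (3) is where you diverge. The paper simply cites \cite[Proposition 1.2.9 and Exercise 3.3.28]{BH} for the equality $\add\omega\resoldim M = d-\depth M$ and then invokes (2). You instead derive the same equality from the machinery already assembled in Section 5: set $N=\RHom_R(\omega,M)$, note that $N$ is a module with $\pd_R N = \add\omega\resoldim M$ by Proposition \ref{level foxby}(4)(b) and Lemma \ref{Chris 4}, observe $M\cong\omega\otimes_R N$ with $\omega$ and $N$ Tor-independent, then use Auslander's depth formula together with $\depth\omega=\depth R=d$ to get $\depth M=\depth N$, and finish with Auslander--Buchsbaum. This is correct and arguably more illuminating than a bare citation: it makes visible how the $\id_R M<\infty$ hypothesis enters (via the Foxby--Sharp correspondence, to guarantee $\add\omega\resoldim M<\infty$), and it reduces the BH exercise to two classical depth facts plus the Foxby dictionary already in place. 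The only external inputs you need beyond what the paper has proved are Auslander's depth formula and the Foxby--Sharp correspondence; the paper's citation of BH packages these. Your closing remark that ${\langle\omega\rangle}_n={\langle\add\omega\rangle}_n$ is a correct and necessary observation to pass from the $\add\omega$-level statement of (2) to the formulation $M\notin{\langle\omega\rangle}_{d-\depth M}$ in (3), which the paper leaves tacit.
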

\begin{proof}
(1) The following holds:
\begin{align*}
\level_{\D^{b}(\Mod R)}^{\Add C}M
& = \level_{\D^{b}(\Mod R)}^{\Proj R}\RHom_{R}(C,M) \text{\quad(by \ref{level foxby} (1)(b))} \\
& \geq \pd_{R}\RHom_{R}(C,M)+\inf\RHom_{R}(C,M)+1 \text{\quad(by \ref{Gproj inj} (1))} \\
& = \Add C\resoldim M +\inf M +1. \text{\quad(by \ref{level foxby} (3)(b) and \ref{Chris 4} (2))}
\end{align*}
Thus, we obtain the desired inequality. (2) can be proven similarly to (1).

(3) By virtue of \cite[Proposition 1.2.9 and Exercise 3.3.28]{BH}, the $\add\omega$-resolution dimension of $M$ is $d-\depth M$. Hence, the assertion follows from (2).
\end{proof}
\section{Contravarianly finite resolving levels}
In this section, we apply Theorem \ref{main thm llevel} to the category of modules over a noetherian algebra.  We start by recalling the notion of  approximations.
\begin{dfn}\label{def of approx}
Let $\mathcal{C}$ be an additive category, and $\X$ a subcategory of $\mathcal{C}$. A morphism $f:X\to C$ in $\mathcal{C}$ with $X\in \X$ is {\em right} $\X${\em -approximation} of $C$ if the induced morphism $\Hom_{\mathcal{C}}(X',X)\to \Hom_{\mathcal{C}}(X',C)$ is surjective for all $X'\in \X$. A right $\X$-approximation $f:X\to C$ is {\em minimal right} $\X${\em -approximation} of $C$ if every morphism $g:X\to X$ satisfying $f=fg$ is an isomorphism. We say that $\X$ is {\em contravariantly finite} in $\mathcal{C}$ if every object $C$ in $\mathcal{C}$ has a right $\X$-approximation.
Dually, a {\em (minimal)} {\em left} $\X${\em -approximation}, and a {\em covariantly finite} subcategory are defined.
\end{dfn}
\begin{rmk}\label{rmk for approx}
\begin{enumerate}[\rm(1)]
\item 
Let $\mathcal{C}$ be a Krull--Schmidt category, and $\X$ an additive subcategory of $\mathcal{C}$. If a morphism $X\to C$ is a right $\X$-approximation of $C$, then there exists a minimal right $\X$-approximation $X'\to C$ of $C$.
\item
Let $R$ be a commutative noetherian ring, and $\A$ an $R$-linear abelian category satisfying that $\Hom_{\A}(X,Y)$ is a finitely generated $R$-module for all objects $X$, $Y\in \A$. Then for all objects $X$ in $\A$, the subcategory $\add X$ is both covariantly and contravariantly finite (see \cite[Remark 5.6]{Aihara Takahashi 2015} for instance).
\end{enumerate}
\end{rmk}
Next, we provide a sufficient condition for a contravariantly finite resolving subcategory of an abelian category to satisfy the condition $(\mathbf{A})$.
For an additive category $\mathcal{C}$ and its subcategory $\X$, we denote by $\Sub \X$ the subcategory of $\mathcal{C}$ consisting of objects $M$ such that there exists an monomorphism $M\to X$ with $X\in \add \X$.
\begin{prop}\label{cond A in KS}
Let $\A$ be a Krull--Schmidt abelian category with enough projective objects, and $\X$ a contravariantly finite resolving subcategory of $\A$. Assume that $\X$ is contained in $\Sub(\X^{\perp})$. Then $\X$ satisfies the condition $(\mathbf{A})$. Hence, we obtain the following inequality for all nonzero objects $M$ in $\D^{b}(\A)$.
\begin{center}
$\level_{\D^{b}(\A)}^{\X}M \geq \X \resoldim M + \inf M +1$.
\end{center}
\end{prop}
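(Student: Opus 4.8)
The plan is to verify that $\X$ satisfies the condition $(\mathbf{A})$ of Definition \ref{condition A}; granting this, the displayed inequality is exactly the conclusion of Theorem \ref{main thm llevel} applied to the resolving subcategory $\X$. Observe first that $\X^{\perp}$ is coresolving by Remark \ref{rmk for resolving}, hence closed under direct summands and under finite direct sums, so $\add(\X^{\perp})=\X^{\perp}$. Consequently the hypothesis $\X\subseteq\Sub(\X^{\perp})$ says precisely that every object $X\in\X$ admits a monomorphism $u\colon X\to W$ with $W\in\X^{\perp}$.

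The heart of the argument is the following claim: \emph{every object $C$ of $\A$ fits into an exact sequence $0\to Y\to X_{C}\xrightarrow{g}C\to 0$ with $X_{C}\in\X$ and $Y\in\X^{\perp}$.} To see this, use that $\A$ is Krull--Schmidt and $\X$ is contravariantly finite to take, by Remark \ref{rmk for approx}(1), a minimal right $\X$-approximation $g\colon X_{C}\to C$. This $g$ is automatically an epimorphism: choosing an epimorphism $p\colon P\to C$ with $P$ projective and factoring it as $p=gp'$ through the approximation forces $\image g=C$. By Wakamatsu's lemma --- which uses only that $\X$ is closed under extensions and that $g$ is a minimal right $\X$-approximation --- one has $\Ext^{1}_{\A}(\X,\ker g)=0$. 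Finally, since $\X$ is resolving, every syzygy $\Omega^{n}A$ of an object $A\in\X$ again lies in $\X$; combined with the isomorphisms $\Ext^{n}_{\A}(A,\ker g)\cong\Ext^{1}_{\A}(\Omega^{n-1}A,\ker g)$ this upgrades the vanishing to $\Ext^{n}_{\A}(\X,\ker g)=0$ for every $n\ge 1$, that is, $Y:=\ker g\in\X^{\perp}$.

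Now fix $X\in\X$, choose a monomorphism $u\colon X\to W$ with $W\in\X^{\perp}$ as above, and put $C=\coker u$, so that $0\to X\xrightarrow{u}W\xrightarrow{q}C\to 0$ is exact. Applying the claim to $C$ gives $0\to Y\to X_{C}\xrightarrow{g}C\to 0$ with $X_{C}\in\X$ and $Y\in\X^{\perp}$. Form the pullback $E=W\times_{C}X_{C}$:
\begin{equation*}
\xymatrix{
E \ar[r]^{a}\ar[d]_{b} & X_{C} \ar[d]^{g} \\
W \ar[r]^{q} & C.
}
\end{equation*}
Since $q$ and $g$ are epimorphisms, and pullbacks of epimorphisms are epimorphisms with isomorphic kernels, $a$ and $b$ are epimorphisms with $\ker a\cong\ker q\cong X$ and $\ker b\cong\ker g=Y$; hence we obtain exact sequences $0\to X\to E\xrightarrow{a}X_{C}\to 0$ and $0\to Y\to E\xrightarrow{b}W\to 0$. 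The first, together with $X,X_{C}\in\X$ and the closedness of $\X$ under extensions, yields $E\in\X$; the second, together with $Y,W\in\X^{\perp}$ and the closedness of $\X^{\perp}$ under extensions, yields $E\in\X^{\perp}$. Thus $E\in\X\cap\X^{\perp}$, and $0\to X\to E\to X_{C}\to 0$ is an exact sequence of the form required by $(\mathbf{A})$, which completes the verification and hence the proof.

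The only non-formal ingredient is the claim, and inside it Wakamatsu's lemma; everything else is bookkeeping with pullbacks and with the closure properties of resolving and coresolving subcategories. I expect the points demanding the most care to be the automatic surjectivity of the minimal approximation and the bootstrap of the $\Ext^{1}$-vanishing to full orthogonality, both of which rely essentially on $\X$ being resolving (so that it contains the projectives and is closed under kernels of epimorphisms).
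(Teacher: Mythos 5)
Your proof is correct and follows essentially the same route as the paper: embed $X$ into an object of $\X^{\perp}$ (using $\Sub(\X^{\perp})=\Sub(\add\X^{\perp})$), take a minimal right $\X$-approximation of the cokernel, invoke Wakamatsu's lemma and the resolving property to see that its kernel lies in $\X^{\perp}$, and then form the pullback to produce the middle term $E\in\X\cap\X^{\perp}$. The only cosmetic difference is that you isolate the approximation step as a standalone claim for arbitrary $C\in\A$ and spell out the syzygy bootstrap from $\Ext^{1}$ to $\Ext^{n}$, both of which the paper leaves implicit.
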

\begin{proof}
Let $X$ be an object in $\X$. By assumption, there exists an exact sequence $0\to X\to E \to Z\to 0$ in $\A$ such that $E$ belongs to $\X^{\perp}$. Take a minimal right $\X$-approximation $f:X'\to Z$ of $Z$. Since $\X$ contains $\proj\A$, the morphism $f$ is an epimorphism. We set $Y=\ker(f)$. By Wakamatsu's lemma (see \cite[Lemma 1.3]{AR} or \cite[Lemma 2.1.1]{Xu} for instance), one has $\Ext_{\A}^{1}(\X,Y)=0$. Since $\X$ is a resolving subcategory of $\A$, we have $Y\in\X^{\perp}$. Now, we obtain the following pullback diagram in $\A$. 
\begin{equation*}
\xymatrix{
&& 0 \ar[d] & 0 \ar[d] & \\
&& X \ar[d] \ar@{=}[r] & X \ar[d] & \\
0 \ar[r] & Y \ar@{=}[d]\ar[r] & Y' \ar[d]\ar[r] & E \ar[d]\ar[r] & 0 \\
0 \ar[r] & Y \ar[r] & X' \ar[r]^{f}\ar[d] & Z \ar[r]\ar[d] & 0 \\
&& 0 & 0 & 
}
\end{equation*}
From the central vertical and horizontal exact sequences, one has $Y'\in \X\cap\X^{\perp}$. Thus, the exact sequence $0\to X\to Y'\to X'\to 0$ is the desired one.
\end{proof}
As a direct consequence of the above proposition, we obtain the following lower bound for the contravariantly finite resolving levels of a complex, which can also be obtained by combining Theorem \ref{main thm llevel} with \cite[Proposition 3.4]{AR}.
\begin{cor}\label{cor of AR}
Let $\Lambda$ be an artin algebra, and $\X$ a contravariantly finite resolving subcategory of $\mod \Lambda$. Then for all objects $M$ in $\D^{b}(\mod \Lambda)$, the following inequality holds.
\begin{center}
$\level_{\D^{b}(\mod \Lambda)}^{\X}M \geq \X \resoldim M + \inf M +1$.
\end{center}
\end{cor}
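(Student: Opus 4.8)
The plan is to obtain this as an immediate application of Proposition~\ref{cond A in KS}, so the work amounts to checking that $\A=\mod\Lambda$ together with the given $\X$ meets all of its hypotheses. First I would recall the standard structural facts about an artin algebra $\Lambda$: the category $\mod\Lambda$ is abelian, is Krull--Schmidt (it is Hom-finite over the base commutative artin ring, so idempotents split and endomorphism rings of indecomposables are local), and has both enough projective and enough injective objects. By hypothesis $\X$ is a contravariantly finite resolving subcategory of $\mod\Lambda$, so the only remaining condition to verify is the inclusion $\X\subseteq\Sub(\X^{\perp})$.

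For this inclusion I would in fact show that \emph{every} object of $\mod\Lambda$ lies in $\Sub(\X^{\perp})$. Given $M\in\mod\Lambda$, choose a monomorphism $M\hookrightarrow I$ with $I$ injective, which exists since $\mod\Lambda$ has enough injectives. Because $\Ext^{i}_{\Lambda}(-,I)=0$ for every $i>0$, we have in particular $\Ext^{i}_{\Lambda}(\X,I)=0$ for all $i>0$, i.e.\ $I\in\X^{\perp}$; and since $\X^{\perp}$ is coresolving (Remark~\ref{rmk for resolving}(2)) it is closed under finite direct sums and direct summands, so $\add(\X^{\perp})=\X^{\perp}$ and $I\in\add(\X^{\perp})$. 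Hence $M\in\Sub(\X^{\perp})$, and in particular $\X\subseteq\Sub(\X^{\perp})$.

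Now all hypotheses of Proposition~\ref{cond A in KS} hold, so $\X$ satisfies the condition $(\mathbf{A})$ and the desired inequality follows for every nonzero $M\in\D^{b}(\mod\Lambda)$ (for $M=0$ there is nothing to prove). As the referenced remark indicates, one could also avoid verifying $\X\subseteq\Sub(\X^{\perp})$ by hand and instead quote \cite[Proposition~3.4]{AR}, which produces the short exact sequences demanded by $(\mathbf{A})$ directly from contravariant finiteness, and then apply Theorem~\ref{main thm llevel}. In neither route is there a genuine obstacle: all the substance is already contained in Proposition~\ref{cond A in KS}, itself built on Lemma~\ref{CI 3.1}, the Ghost lemma, and Theorem~\ref{main thm llevel}; here one only confirms that the module category of an artin algebra carries enough injectives to force $\X\subseteq\Sub(\X^{\perp})$.
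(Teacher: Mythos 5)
Your proposal is correct and follows the paper's intended route: deduce the corollary from Proposition~\ref{cond A in KS}, whose only nontrivial hypothesis to check is the inclusion $\X\subseteq\Sub(\X^{\perp})$, which you correctly supply from the fact that $\mod\Lambda$ has enough injectives and every injective lies in $\X^{\perp}$. The paper leaves this verification implicit (calling the corollary a direct consequence and pointing to \cite[Proposition 3.4]{AR} as an alternative), so your argument fills in precisely the step the paper elides, with no gap.
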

We end this section by providing an application for the resolving subcategory obtained as the orthogonal subcategory of a self-orthogonal object.
Let $\A$ be an abelian category. An object $T$ of $\A$ is {\em self-orthogonal} if one has $\Ext_{\A}^{i}(T,T)=0$ for all $i>0$.
The following proposition is due to Yuya Otake. The author is indebted to him for his offer to include it in this paper.
\begin{prop}\label{self-orth}
Let $R$ be a commutative noetherian ring, and $\Lambda$ a noetherian $R$-algebra. Let $T$ be a self-orthogonal module in $\mod \Lambda$, and we set $\X= {}^{\perp}T$. Assume that $\X$ is contained in $\Sub T$. Then $\X$ satisfies the condition $(\mathbf{A})$. Hence, we have the following inequality for all nonzero objects $M$ in $\D^{b}(\mod \Lambda)$.
\begin{center}
$\level_{\D^{b}(\mod \Lambda)}^{\X}M \geq \X \resoldim M + \inf M +1$.
\end{center}
\end{prop}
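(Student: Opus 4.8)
The plan is to deduce the displayed inequality from Theorem~\ref{main thm llevel} by checking that $\X={}^{\perp}T$ is a resolving subcategory of $\mod\Lambda$ satisfying the condition $(\mathbf{A})$. First I would record the easy part of the setup: $\mod\Lambda$ has enough projective objects, and $\X={}^{\perp}T$ is resolving by Remark~\ref{rmk for resolving}(2). The observation that drives the argument is that $\add T\subseteq\X\cap\X^{\perp}$. Indeed, self-orthogonality of $T$ means $\Ext_{\A}^{i}(T,T)=0$ for $i>0$, which says precisely that $T\in{}^{\perp}T=\X$; and $T\in({}^{\perp}T)^{\perp}=\X^{\perp}$ holds tautologically, since by definition every $Z\in{}^{\perp}T$ satisfies $\Ext_{\A}^{i}(Z,T)=0$ for $i>0$. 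As $\X$ and $\X^{\perp}$ are both closed under finite direct sums and direct summands, $\add T$ lies in their intersection.

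Next I would verify $(\mathbf{A})$. Given a nonzero $X\in\X$ (the zero case being trivial): Remark~\ref{rmk for approx}(2) applies to $\mod\Lambda$, so $\add T$ is covariantly finite in $\mod\Lambda$, and I may pick a left $\add T$-approximation $g\colon X\to Y$ with $Y\in\add T$. The hypothesis $\X\subseteq\Sub T$ furnishes a monomorphism $\iota\colon X\to E$ with $E\in\add T$; since $g$ is a left $\add T$-approximation, $\iota$ factors through $g$, and therefore $g$ is itself a monomorphism. Setting $X'=\coker g$, the sequence $0\to X\xrightarrow{g}Y\to X'\to 0$ is exact with $Y\in\add T\subseteq\X\cap\X^{\perp}$.

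The remaining, and most delicate, step is to show $X'\in\X={}^{\perp}T$. Applying $\Hom_{\A}(-,T)$ to this short exact sequence: for $i\geq 2$ the group $\Ext_{\A}^{i}(X',T)$ sits between $\Ext_{\A}^{i-1}(X,T)=0$ (as $X\in{}^{\perp}T$) and $\Ext_{\A}^{i}(Y,T)=0$ (as $Y\in\add T$ and $T$ is self-orthogonal), hence vanishes; and for $i=1$ the exact sequence $\Hom_{\A}(Y,T)\to\Hom_{\A}(X,T)\to\Ext_{\A}^{1}(X',T)\to\Ext_{\A}^{1}(Y,T)=0$ identifies $\Ext_{\A}^{1}(X',T)$ with the cokernel of $\Hom_{\A}(g,T)$, which is surjective precisely because $g$ is a left $\add T$-approximation. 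Thus $\Ext_{\A}^{i}(X',T)=0$ for all $i>0$, so $X'\in\X$, the sequence $0\to X\to Y\to X'\to 0$ witnesses $(\mathbf{A})$, and Theorem~\ref{main thm llevel} yields the claim. I do not anticipate a serious obstacle: the point requiring attention is exactly this last one, where — in contrast to the general contravariantly finite situation of Proposition~\ref{cond A in KS}, which is routed through Wakamatsu's lemma — the special shape $\X={}^{\perp}T$ lets the needed $\Ext$-vanishing drop out of the long exact sequence together with the defining property of the left approximation.
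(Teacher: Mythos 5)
Your argument is correct and coincides with the paper's own proof: the same left $\add T$-approximation of $X$, the same use of $\X\subseteq\Sub T$ to show that approximation is a monomorphism, and the same long-exact-sequence computation (surjectivity of $\Hom_\Lambda(g,T)$ for $\Ext^1$, self-orthogonality of $T$ and $X\in{}^{\perp}T$ for $\Ext^{\geq 2}$) to place the cokernel back in $\X$. The only cosmetic difference is that you spell out $\add T\subseteq\X\cap\X^{\perp}$, which the paper merely notes.
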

\begin{proof}
Let $X$ be a module in $\X$. Since $\add T$ is a covariantly finite subcategory of $\mod \Lambda$, there exists a left $\add T$-approximation $f:X\to T_{0}$ of $X$. By assumption, there exists a monomorphism $m:X\to T'$ with $T'\in \add T$, and $m$ factors through $f$. Hence $f$ is also a monomorphism, and we obtain the exact sequence $0\to X\xrightarrow{f} T_{0}\to X'\to 0$ in $\mod \Lambda$. Note that $\add T \subseteq \X\cap\X^{\perp}$. We shall show that $X'$ belongs to $\X$. Applying the functor $\Hom_{\Lambda}(-,T)$ to the above exact sequence, one can easily see that $\Ext_{\Lambda}^{i}(X',T)=0$ for all $i>1$, and we have the following exact sequence.
\begin{center}
$\Hom_{\Lambda}(T_{0},T)\xrightarrow{\Hom_{\Lambda}(f,T)} \Hom_{\Lambda}(X,T)\to \Ext_{\Lambda}^{1}(X',T) \to 0$.
\end{center}
Since $f$ is a left $\add T$-approximation of $X$, the induced morphism $\Hom_{\Lambda}(f,T)$ is surjective, and one has $\Ext_{\Lambda}^{1}(X',T)=0$. Thus, $X'\in {}^{\perp}T=\X$ and the exact sequence $0\to X\xrightarrow{f} T_{0}\to X'\to 0$ is the desired one.
\end{proof}
\begin{rmk}
Let $R$ be a commutative noetherian ring, and $\Lambda$ a noetherian $R$-algebra. Let $T$ be a self-orthogonal module in $\mod \Lambda$, and $\X$ a resolving subcategory of $\mod \Lambda$.
\begin{enumerate}[\rm(1)]
\item
Assume that $\X={}^{\perp}T$ and $\add T=\X\cap\X^{\perp}$. Then the converse of the above proposition also holds. Namely, if $\X$ satisfies the condition $(\mathbf{A})$, the one has $\X\subseteq \Sub T$.
\item
Assume that $\add T=\X\cap\X^{\perp}$ and $\X$ satisfies the condition $(\mathbf{A})$. Then one has $\X\subseteq {}^{\perp}T$. However, the equality does not hold in general (see \cite[Corollary 1.5]{KLOT} for instance). Note that the equality $\add T=\X\cap\X^{\perp}$ may not be considered a strong assumption from the viewpoint of \cite[p.261]{Happel}. 
\end{enumerate}
\end{rmk}
\begin{ac}
The author would like to thank his supervisor Ryo Takahashi for giving many thoughtful questions and helpful discussions. He also thanks Yuya Otake for his valuable comments.
\end{ac}

\end{document}